\def\Yint#1{\mathchoice
    {\YYint\displaystyle\textstyle{#1}}%
    {\YYint\textstyle\scriptstyle{#1}}%
    {\YYint\scriptstyle\scriptscriptstyle{#1}}%
    {\YYint\scriptscriptstyle\scriptscriptstyle{#1}}%
      \!\iint}
\def\YYint#1#2#3{{\setbox0=\hbox{$#1{#2#3}{\iint}$}
    \vcenter{\hbox{$#2#3$}}\kern-.50\wd0}}
\def\longdash{-\mkern-9.5mu-} 
\def\tiltlongdash{\rotatebox[origin=c]{18}{$\longdash$}}
\def\fiint{\Yint\tiltlongdash}
\def\Xint#1{\mathchoice
    {\XXint\displaystyle\textstyle{#1}}%
    {\XXint\textstyle\scriptstyle{#1}}%
    {\XXint\scriptstyle\scriptscriptstyle{#1}}%
    {\XXint\scriptscriptstyle\scriptscriptstyle{#1}}%
      \!\int}
\def\XXint#1#2#3{{\setbox0=\hbox{$#1{#2#3}{\int}$}
    \vcenter{\hbox{$#2#3$}}\kern-.50\wd0}}
\def\hlongdash{-\mkern-13.5mu-}
\def\tilthlongdash{\rotatebox[origin=c]{18}{$\hlongdash$}}
\def\hint{\Xint\tilthlongdash}
\def\namedlabel#1#2{\begingroup
   \def\@currentlabel{#2}%
   \label{#1}\endgroup
}
\newcommand{\rmh}[1]{\mathpalette{\raisem@th{#1}}}
\newcommand{\raisem@th}[3]{\hspace*{-1pt}\raisebox{#1}{$#2#3$}}
\newcommand{\lsb}[2]{#1_{\rmh{-3pt}{#2}}}
\newcommand{\lsbo}[2]{#1_{\rmh{-1pt}{#2}}}
\newcommand{\lsbt}[2]{#1_{\rmh{-2pt}{#2}}}
\newcounter{desccount}
\newcommand{\descitem}[2]{%
  \item[(#1)] \refstepcounter{desccount}\label{#2}
}
\newcommand{\descref}[1]{\hyperref[#1]{\textcolor{black}{(}\textcolor{blue}{\bf #1}\textcolor{black}{)}}}
\newcommand{\vo}{\vec{o}\@ifnextchar{^}{\,}{}}
\numberwithin{equation}{section}
\newtheorem{theorem}{Theorem}[section]
\newtheorem{lemma}[theorem]{Lemma}
\newtheorem{corollary}[theorem]{Corollary}
\newtheorem{definition}[theorem]{Definition}
\newtheorem{remark}[theorem]{Remark}        
\numberwithin{equation}{section}
\def\al{\alpha}
\def\be{\beta}
\def\ga{\gamma}
\def\de{\delta}
\def\ve{\varepsilon}
\def\vt{\vartheta}
\def\th{\theta}
\def\la{\lambda}
\def\ka{\kappa}
\def\sig{\sigma}
\def\om{\omega}
\def\pa{\partial}
\def\Th{\Theta}
\def\Om{\Omega}
\def\La{\Lambda}
\def\aa{\mathcal{A}}
\newcommand{\tO}{\tilde{\Om}}
\newcommand{\tQ}{\tilde{Q}}
\newcommand{\tB}{\tilde{B}}
\newcommand{\tm}{I}
\newcommand{\mm}{\mathcal{M}}
\newcommand{\RR}{\mathbb{R}}
\newcommand{\NN}{\mathbb{N}}
\newcommand{\redref}[2]{\texorpdfstring{\protect\hyperlink{#1}{\textcolor{black}{(}\textcolor{red}{#2}\textcolor{black}{)}}}{}}
\newcommand{\redlabel}[2]{\hypertarget{#1}{\textcolor{black}{(}\textcolor{red}{#2}\textcolor{black}{)}}}
\newcommand{\iprod}[2]{\langle #1 ,  #2\rangle}
\newcommand{\lbr}[1][(]{\left#1}
\newcommand{\rbr}[1][)]{\right#1}
\newcommand{\cpt}[1][p]{\capacity_{1,#1}}
\newcommand{\txt}[1]{\qquad \text{#1} \quad}
\newcommand{\tal}{\tilde{\al}}
\DeclareMathOperator{\dv}{div}
\DeclareMathOperator{\capacity}{cap}
\DeclareMathOperator{\spt}{spt}
\newcommand{\cac}{C}
\def\tal{\tilde{\alpha}}
\def\tga{\tilde{\gamma}}
\newcommand{\tq}{\tilde{q}}
\newcommand{\tp}{\tilde{p}}
\newcommand{\tr}{\tilde{r}}
\newcommand{\tu}{\tilde{u}}
\newcommand{\tuh}{\tilde{u}_h}
\newcommand{\tk}{\tilde{k}}
\newcommand{\tz}{\tilde{z}}
\newcommand{\mfz}{\mathfrak{z}}
\newcommand{\mch}{\mathcal{H}}
\newcommand{\mcc}{\mathcal{C}}
\newcommand{\mcZ}{\mathcal{Z}}
\newcommand{\avgs}[2]{\lsbo{\lbr #2 \rbr}{#1}}
\newcommand{\avgsnoleft}[2]{\lsbo{( #2 )}{#1}}
\newcommand{\ddt}[1]{\frac{d#1}{dt}}
\newcommand{\dds}[1]{\frac{d#1}{ds}}
\newcommand{\vlh}{\lsbt{v}{\la,h}}
\newcommand{\vl}{\lsbt{v}{\la}}
\newcommand{\elam}{\lsbo{E}{\lambda}}
\newcommand{\vwspace}{L^{p-\be}(-T,T; W_0^{1,p-\be}(\Om))}
\newcommand{\pard}[1][\la]{d_{#1}}
\newcommand{\lamot}{\La_0,\La_1}
\newcommand{\tTh}{{\Upsilon}}
\newcounter{whitney}
\newcounter{ineqcounter}
\begin{document}

\begin{frontmatter}

\title{Boundary higher integrability for very weak solutions of quasilinear parabolic equations.}

\author[myaddress]{Karthik Adimurthi\corref{mycorrespondingauthor}\tnoteref{thanksfirstauthor}}
\cortext[mycorrespondingauthor]{Corresponding author}
\ead{karthikaditi@gmail.com and kadimurthi@snu.ac.kr}
\tnotetext[thanksfirstauthor]{Supported by the National Research Foundation of Korea grant NRF-2015R1A2A1A15053024.}

\author[myaddress,myaddresstwo]{Sun-Sig Byun\tnoteref{thankssecondauthor}}
\ead{byun@snu.ac.kr}
\tnotetext[thankssecondauthor]{Supported by the National Research Foundation of Korea grant  NRF-2015R1A4A1041675. }

\address[myaddress]{Department of Mathematical Sciences, Seoul National University, Seoul 08826, Korea.}
\address[myaddresstwo]{Research Institute of Mathematics, Seoul National University, Seoul 08826, Korea.}

\begin{abstract}
We prove boundary higher integrability for the (spatial) gradient of \emph{very weak} solutions of quasilinear parabolic equations of the form $$u_t - \text{div}\,\mathcal{A}(x,t, \nabla u)=0 \quad \text{on} \ \Omega \times \RR,$$ where the non-linear structure $\text{div}\,\mathcal{A}(x, t,\nabla u)$ is modelled after the  $p$-Laplace operator.    To this end, we prove that the gradients satisfy a reverse H\"older inequality near the boundary. In order to do this, we construct a suitable test function which is Lipschitz continuous and  preserves the boundary values. \emph{These results are new even for linear parabolic equations on domains with smooth boundary and make no assumptions on the smoothness of $\mathcal{A}(x,t,\nabla u)$}. These results are also applicable for systems as well as higher order parabolic  equations.

\vspace*{0.5cm}
\noindent {\bf R\'esum\'e}\\
Nous montrons l'int\'egrabilit\'e des limites pour le gradient (spatial) de solutions tr\`es faibles d'\'equations paraboliques quasi-lin\'eaires de la forme
$$
u_t-\text{div}\mathcal{A}(x,t,\nabla u)=0 \quad \text{ sur } \Omega\times \mathbb{R},
$$
o\`u la structure non lin\'eaire $\text{div}\mathcal{A}(x,t,\nabla u)$ est model\'ee d'apr\`es l'op\'erateur
$p$-Laplace. Nous prouvons que les gradients satisfont l'in\'egalit\'e de H\"older inverse pr\`es du bord. Pour ce faire, nous construisons une fonction test appropri\'ee qui est lipschitzienne et pr\'eserve les valeurs limites. Ces r\'esultats sont nouveaux, m\^eme pour les \'equations  paraboliques lin\'eaires \`a fronti\`ere lisse et ne font aucune hypoth\`ese sur la r\'egularit\'e de $\text{div}\mathcal{A}(x,t,\nabla u)$ et sont \'egalement applicables aux syst\`emes ainsi qu'aux \'equations paraboliques d'ordre sup\'erieur. 
\end{abstract}

\begin{keyword}
quasilinear parabolic equations, boundary higher integrability, very weak solutions
\MSC[2010] 35K10\sep  99-00\sep 35K59\sep 35K65\sep 35K67.
\end{keyword}

\end{frontmatter}

\section{Introduction}
In this paper, we consider the boundary regularity  of quasilinear parabolic partial differential equations of the form 
\begin{equation}
\label{main}
 \left\{
 \begin{array}{ll}
u_t - \dv  \aa(x,t,\nabla u) = 0 & \text{on} \ \Om \times (-T,T) \\
u = 0 & \text{on} \ \partial \Om \times (-T,T),
 \end{array}
\right.
\end{equation}
modelled after the well studied $p$-Laplacian operator in a bounded domain $\Om\subset \RR^n$,  potentially with non-smooth boundary $\pa \Om$.

Weak solutions to \eqref{main} are in the space $u \in L^2(-T,T; L^2(\Om)) \cap L^p(-T,T; W_0^{1,p}(\Om))$ which allows one to use $u$ as a test function. But from the definition of weak solution, we see that the expression (see Definition \ref{very_weak_solution}) makes sense if we only assume $u \in L^2(-T,T; L^2(\Om)) \cap L^{s}(-T,T; W_0^{1,s}(\Om))$ for some $s > p-1$. But under this milder notion of solution called \emph{very weak solution}, we lose the ability to use $u$ as a test function. This difficulty was overcome in \cite{KL} in the interior case by constructing a suitable test function after modifying $u$ on a \emph{bad} set. The result was further extended to higher order systems in \cite{Bog} {\color{black}(see also \cite[Chapter 3]{bogelein2013regularity} where a more general form of Lipschitz truncation method was developed)}. In this paper, we show the boundary higher integrability for the gradient of very weak solutions to \eqref{main} in the sense of \cite{KL}. \emph{The results presented here are new even for linear parabolic equations on smooth domains.}

Regarding the assumption on the structure of the boundary of the domain, we assume that the complement of the domain $\Om^c$ satisfies a uniform capacity density condition (see Definition \ref{p_thick_domain}) . This condition is essentially optimal for our main results, as this condition is needed to ensure that Poincar\'e's inequality is applicable (see \cite[Chapter 11]{AH} for example).  The proofs are based on the Caccioppoli and Sobolev-Poincar\'e type inequalities as well as on the careful analysis of the associated strong Maximal function. As with regularity results concerning quasilinear parabolic equations, we make use of intrinsic scaling and covering techniques.

The first nonlinear parabolic higher integrability results apparently date back to a 1982 paper \cite{Gstr} in which the author studied the local higher integrability for systems of parabolic equations with quadratic growth conditions. However for more general quasilinear systems, the problem remained open until the papers \cite{KL, KL1} in which interior higher integrability was shown for both \emph{weak} and \emph{very weak} solutions to equations of the form considered in \eqref{main}.  For higher order quasilinear parabolic systems, interior higher integrability for very weak solutions was proved in \cite{Bog}. The boundary higher integrability for \emph{weak} solutions under the condition that $\Om^c$ satisfies a uniform capacity density condition was done in \cite{Par, Mik1}.  

We shall make precise all the notations in the following section, but let us first state the main theorem:
\begin{theorem}
 \label{main_theorem}
 Let $n\geq 2$ and $\frac{2n}{n+2} < p<\infty$ and $\Om$ be a bounded domain whose complement is uniformly $p$-thick with constants $r_0,b_0$, then there exists $\be_0 = \be_0(n,p,\lamot,b_0) \in (0,1)$ such that the following holds: For any $\be \in (0,\be_0]$ and any very weak solution $u \in L^2(-T,T; L^2(\Om)) \cap L^{p-\be}(-T,T; W_0^{1,p-\be}(\Om))$ solving \eqref{main} in $\Om \times (-T,T)$,  we have the following improved integrability $u \in L^2(-T,T; L^2(\Om)) \cap L^{p+\be}(-T,T; W_0^{1,p+\be}(\Om))$ along with the following qualitative estimates: for any $Q_{z_0}(\rho,s)$ with $z_0 \in (\overline{\Om} \times (-T,T))$, there holds
\begin{equation}
\label{main_est}
  \fiint_{Q_{z_0}(\rho,s)} |\nabla u|^{p} \ dz \apprle \lbr \fiint_{Q_{z_0}(2\rho, 2^2s)} \lbr |\nabla u| + \th\rbr^{p-\be} \ dz\rbr^{1+\frac{\be}{d}}  + \fiint_{Q_{z_0}(2\rho, 2^2 s)} (1 + \th^p) \ dz. 
 \end{equation}
where $\th$ is from \eqref{bound_b}, $C = C(n,p,b_0,\lamot)$  and 
\begin{equation}
 \label{de_d}
 d:= \left\{ \begin{array}{ll}
              2-\be & \text{if} \ p \geq 2 \\
              p-\be - \frac{(2-p)n}{2} & \text{if} \ p<2
             \end{array}\right.
\end{equation}

\end{theorem}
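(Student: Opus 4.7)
The plan is to follow the intrinsic-scaling/Lipschitz-truncation strategy introduced by Kinnunen--Lewis in the interior case, adapted so that the truncation preserves the zero boundary values. The end goal is a reverse Hölder inequality on intrinsic cylinders, from which \eqref{main_est} follows by a Gehring-type self-improvement lemma on the level sets of the strong maximal function $\mathbf{M}(|\nabla u|^{p-\be})$.

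\textbf{Step 1 (set-up and extension).} I extend $u$ by zero to $\RR^n \setminus \Om$; because $\Om^c$ is uniformly $p$-thick, the zero extension lies in $L^{p-\be}_{\text{loc}}(-T,T;W^{1,p-\be}_{\text{loc}}(\RR^n))$ and Poincaré inequalities are available for balls that meet $\Om^c$ in a set of positive capacity, uniformly in the ball. Fix a pair of parabolic cylinders $Q_{z_0}(\rho,s) \subset Q_{z_0}(2\rho,2^2s)$ and, for a level $\la$ above a carefully chosen threshold depending on the average of $|\nabla u|^{p-\be} + \th^{p-\be}$ over the outer cylinder, define the bad set
\[
E(\la) \;=\; \bigl\{\, z : \mathbf{M}(|\nabla u|^{p-\be} + \th^{p-\be})(z) > \la^{p-\be} \,\bigr\}.
\]
Using intrinsic parabolic cylinders $Q_\la(r) = B_r \times (-\la^{2-p}r^2, \la^{2-p}r^2)$ (and the analogous sub-quadratic scaling for $p<2$, dictated by the exponent $d$ in \eqref{de_d}), I Whitney-decompose $E(\la)$ into a countable family $\{Q_i\}$ of dyadic intrinsic cylinders with bounded overlap and the usual separation from $E(\la)^c$.

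\textbf{Step 2 (boundary-preserving Lipschitz truncation).} For each Whitney cylinder $Q_i$ pick a cutoff $\vp_i$ subordinate to a partition of unity, and define a piecewise mollified approximation
\[
v_\la \;=\; u \cdot \chi_{E(\la)^c} + \sum_i \vp_i\,(u)_{Q_i^*}
\]
where $(u)_{Q_i^*}$ is the mean of $u$ on a slightly enlarged cylinder $Q_i^*$. The crucial point at the boundary is that if $Q_i^*$ meets $\Om^c$ on a set of sufficient capacity (which is forced by the uniform $p$-thickness whenever $Q_i^*$ lies close to $\partial\Om \times (-T,T)$), then the Poincaré--Sobolev inequality with zero boundary datum gives
\[
|(u)_{Q_i^*}| \;\apprle\; \la\,r_i,
\]
so $v_\la$ vanishes on $\partial\Om\times(-T,T)$ in the trace sense and is globally Lipschitz with constant comparable to $\la$ in the intrinsic geometry. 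The Whitney structure, together with a De Giorgi-type mean-value bound on neighbouring cylinders, yields $v_\la = u$ on $E(\la)^c$ and $\|v_\la - u\|$ controlled on each $Q_i$.

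\textbf{Step 3 (test function and Caccioppoli).} Multiplying $v_\la$ by a time-cutoff in $Q_{z_0}(2\rho,2^2s)$ produces an admissible test function in the very-weak formulation of \eqref{main}. Plugging it in and exploiting the structure of $\aa$ through $\lamot$, the ellipticity term gives $\int_{E(\la)^c \cap Q_{z_0}(\rho,s)} |\nabla u|^p$ while the error terms split into
\begin{itemize}
  \item a parabolic tail term controlled by $\la^{p-2}\|u-v_\la\|_{L^2}^2$ via the usual Steklov-average/mollification trick, and
  \item a gradient term on the bad set $E(\la) \cap Q_{z_0}(2\rho,2^2s)$ that is handled using the Lipschitz bound $|\nabla v_\la| \apprle \la$ together with the bounded overlap of $\{Q_i^*\}$.
\end{itemize}
Both error terms are ultimately dominated by $\la^{p-\be}\,|E(\la) \cap Q_{z_0}(2\rho,2^2s)|$ thanks to the definition of $E(\la)$ and the intrinsic scaling, which is what makes the argument close.

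\textbf{Step 4 (good-$\la$ estimate and Gehring).} The bound from Step 3 is a good-$\la$ inequality of the form
\[
\la^{\be}\!\!\int_{\{|\nabla u|>\la\}\cap Q_{z_0}(\rho,s)} |\nabla u|^{p-\be}\, dz
\;\apprle\;
\int_{\{|\nabla u|>c\la\}\cap Q_{z_0}(2\rho,2^2s)} (|\nabla u|^{p-\be} + \th^{p-\be})\, dz,
\]
which, after integrating in $\la$ over the threshold determined in Step 1, yields the reverse Hölder inequality \eqref{main_est} with the exponent shift $\be/d$ arising from the intrinsic scaling exponent \eqref{de_d}. A standard Gehring-type self-improvement on intrinsic cylinders then upgrades $|\nabla u|^{p-\be}$ to $|\nabla u|^{p+\be}$ for some $\be_0 = \be_0(n,p,\lamot,b_0)$.

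\textbf{Main obstacle.} The technical heart of the argument is the construction in Step 2: producing a Lipschitz truncation of a very weak solution that simultaneously (i) respects the intrinsic parabolic geometry, (ii) has the right $L^\infty$-bound in terms of $\la$, and (iii) vanishes on $\partial\Om\times(-T,T)$. The interior Kinnunen--Lewis construction only requires (i)--(ii); the boundary control (iii) is where the uniform $p$-thickness enters in an essential way, through the capacitary Poincaré inequality that lets us replace cube averages by $\la r_i$ whenever a Whitney cylinder approaches the lateral boundary. Getting all three properties consistently on the same Whitney decomposition, including the sub-quadratic case where the scaling is dictated by $d = p - \be - (2-p)n/2$, is the step that requires the most care.
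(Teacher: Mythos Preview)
Your outline matches the paper's strategy: Whitney-decompose the bad set, build a boundary-preserving Lipschitz truncation (with Whitney averages set to zero when the cube escapes the domain-cylinder), test with it to obtain a Caccioppoli inequality, and finish by Gehring. Two places in your sketch are too optimistic and hide real work.

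First, in Step~3 the time-derivative contribution does not reduce to a tail of the form $\la^{p-2}\|u-v_\la\|_{L^2}^2$. After integrating by parts in time you are left with the slice quantity $\int_{B\setminus E_\la^t}\bigl(|\tilde u|^2-|\tilde u-v_\la|^2\bigr)\,dx$, which need not be nonnegative; bounding it from below is a separate argument (the paper's Lemma~\ref{crucial_lemma}) that uses the equation once more on each Whitney cube and splits according to whether the cube is small or large relative to the ambient time slab. Without this lower bound the parabolic energy term does not close.

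Second, your Step~4 jumps from Caccioppoli to a good-$\la$ estimate, but the Caccioppoli output still carries the lower-order terms $\fiint|u/\rho|^2$ and $\fiint|u/\rho|^{p-\be}$ on the right. These must be absorbed before any Gehring lemma applies. The paper does this by (i) integrating the Caccioppoli inequality over $\la\in(c_e\al_0,\infty)$ with weight $\la^{-1-\be}$, which produces a weighted energy $\sup_t\hint_B|u/\rho|^2\mathcal M^{-\be}$, and (ii) an interpolation inequality (Lemma~\ref{lemma7.4}, resting on the capacitary Sobolev--Poincar\'e inequality) that trades this weighted energy against $\bigl(\fiint|\nabla u|^q\bigr)^{1/q}$. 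For $p<2$ an additional iteration (Lemma~\ref{lemma6.2}) is needed to force $\fiint|u/\rho|^2\apprle\al_0^2$. Only then do you obtain the genuine reverse H\"older inequality of Lemma~\ref{reverse_holder_inequality} on intrinsic cylinders, to which the parabolic Gehring lemma applies and the exponent $d$ in \eqref{de_d} emerges from the intrinsic scaling in Section~\ref{main_estimates}.
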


\begin{remark}
     In Theorem \ref{main_theorem}, we claim $u \in L^2(-T,T; L^2(\Om)) \cap L^{p+\be}(-T,T; W_0^{1,p+\be}(\Om))$ whereas the estimate in \eqref{main_est} only gives $u \in L^2(-T,T; L^2(\Om)) \cap L^{p}(-T,T; W_0^{1,p}(\Om))$. The current paper only proves higher integrability till the natural exponent as in \eqref{main_est}, but to reach $p+\be$, we can now make use of the results from \cite{Par,Mik1}. 
\end{remark}

The plan of the paper is as follows: In Section \ref{Preliminaries}, we shall collect all the assumptions and some well known, but useful lemmas. In Section \ref{Construction_of_test_function}, we shall give the construction of the test function using Lipschitz truncation and collect its properties. In Section \ref{Caccioppoli_inequality}, we will use results from Section \ref{Construction_of_test_function} to obtain Caccioppoli inequality and Reverse H\"older inequality and finally in Section \ref{main_estimates}, we will provide the proof of Theorem \ref{main_theorem}.

\subsection*{Acknowledgement} 
The authors would like to thank the anonymous referee for careful reading of the earlier version and giving valuable comments and suggestions.

\section{Preliminaries}\label{Preliminaries}

In this section, we shall collect some well known theories and estimates that will be used in the latter parts of the paper. 

\subsection{Variational \texorpdfstring{$p$}{pc}-Capacity}
Let $1<p<\infty$, then the \emph{variational $p$-capacity} of a compact set $K \Subset \Om$ is defined to be 
\begin{equation*}
 \label{var_cap_def}
 \cpt(K, \RR^n) = \inf \left\{ \int_{\RR^n} |\nabla \phi|^p \ dx \ : \ \phi \in C_c^{\infty}(\RR^n),\ \lsb{\chi}{K}(x) \leq \phi(x) \leq 1 \right\},
\end{equation*}
where $\lsb{\chi}{K} (x) = 1 $ for $x \in K$ and $\lsb{\chi}{K}(x) = 0$ for $x \notin K$. To define the variational $p$-capacity of an open set $O \subset \RR^n$, we take the supremum over the capacities of the compact sets belonging to $O$. The variational $p$-capacity of an arbitrary set $E \subset \RR^n$ is defined by taking the infimum over the capacities of the open sets containing $E$. For further details, see \cite{AH,HKM}.


Let us now introduce the capacity density conditions which we later impose on the complement of the domain. For higher integrability results, this condition is essentially sharp (see  \cite[Remark 3.3]{KK94} for the linear elliptic case and \cite{AP1} for the quasilinear elliptic case). 
\begin{definition}[Uniform $p$-thickness] \label{p_thick}
Let $\tO\subset\RR^n$ be a bounded domain. We say that the complement $\tO^c:=\RR^n\setminus \tO$ is 
uniformly $p$-thick for some $1< p \leq n$ with constants  $ \tilde{b}_0,\tilde{r}_0>0$, if  the inequality  
$$ {\rm cap}_{p} ( \overline{B_r(x)} \cap \tO^c, B_{2r}(x)) \geq  \tilde{b}_0\, {\rm cap}_{p} (\overline{B_{r}(x)}, B_{2r}(x))$$ 
holds for any $x \in \partial \tO$ and $r\in(0, \tr_0]$. 
\end{definition}

It is well-known that the class of  domains with uniform $p$-thick complements
is very large. They include all domains with Lipschitz boundaries or even those that satisfy a uniform exterior corkscrew condition, where the latter means that there exist constants $c_0, r_0>0$ such that for all $0<t\leq r_0$ and all $x\in \RR^n\setminus \Om$, there is $y\in B_t(x)$
such that $B_{t/c_0}(y)\subset \RR^n\setminus \Om$.

%
%

If we replace the capacity with the Lebesgue measure in Definition \ref{p_thick}, then we obtain a measure density condition. A set $E$ satisfying the measure density condition is uniformly $p$-thick for all $p>1$. If $p>n$, then every non-empty set is uniformly $p$-thick. The following lemma from \cite[Lemma 3.8]{Par} extends the capacity estimate in Definition \ref{p_thick} to make precise the notion of being \emph{uniformly $p$-thick}:
\begin{lemma}[\cite{Par}]
 Let $\tO$ be a bounded open set, and suppose that $\RR^n \setminus \tO$ is uniformly $p$-thick with constant $\tilde{b}_0,\tr_0$. Choose $y \in \tO$ such that $B_{\frac34 r}(y)\setminus \tO \neq \emptyset$, then there exists a constant $\tilde{b}_1 = \tilde{b}_1(b_0,r_0,n,p)>0$ such that 
 \[
  \cpt(\overline{B_{2r}(y)}\setminus \tO, B_{4r}(y) ) \geq \tilde{b}_1 \cpt( \overline{B_{2r}(y)}, B_{4r}(y) ). 
 \]
\end{lemma}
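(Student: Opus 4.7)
The plan is to transport the uniform $p$-thickness estimate—which is phrased at boundary points of $\tO$ on a small scale—over to the larger ball $\overline{B_{2r}(y)}$. I would begin by locating a point $x_0\in\pa\tO\cap\overline{B_{3r/4}(y)}$: since $\tO$ is open and contains $y$, the segment from $y$ to any $z\in B_{3r/4}(y)\setminus\tO$ (which exists by hypothesis) must exit $\tO$ through the boundary, giving such an $x_0$. Setting $\rho:=r/4$, and assuming first that $r\leq 4\tr_0$ (the complementary range $r>4\tr_0$ being absorbed into the constant via the fixed scale $\tr_0$), the uniform $p$-thickness hypothesis applied at $x_0$ yields
\[
\cpt(\overline{B_\rho(x_0)}\cap\tO^c, B_{2\rho}(x_0))\geq\tilde{b}_0\,\cpt(\overline{B_\rho(x_0)}, B_{2\rho}(x_0)).
\]
The inclusions $\overline{B_\rho(x_0)}\cap\tO^c\subset\overline{B_{2r}(y)}\setminus\tO$ and $B_{2\rho}(x_0)\subset B_{4r}(y)$ follow from $|x_0-y|\leq 3r/4$, while the classical scaling identity $\cpt(\overline{B_s(z)}, B_{2s}(z))\asymp s^{n-p}$, with constants depending only on $n$ and $p$, gives $\cpt(\overline{B_\rho(x_0)}, B_{2\rho}(x_0))\asymp\cpt(\overline{B_{2r}(y)}, B_{4r}(y))$.

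The main obstacle is to bridge between capacities with \emph{different} ambient balls: the $p$-thickness lower bound is with ambient $B_{2\rho}(x_0)$, whereas the conclusion demands ambient $B_{4r}(y)$. Direct set-monotonicity gives the easy step $\cpt(\overline{B_\rho(x_0)}\cap\tO^c, B_{4r}(y))\leq\cpt(\overline{B_{2r}(y)}\setminus\tO, B_{4r}(y))$; the non-trivial comparison is
\[
\cpt(\overline{B_\rho(x_0)}\cap\tO^c, B_{2\rho}(x_0))\leq C(n,p)\,\cpt(\overline{B_\rho(x_0)}\cap\tO^c, B_{4r}(y)),
\]
which I would establish by a cutoff-and-Poincar\'e argument. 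Namely, take a near-extremal $u\in W_0^{1,p}(B_{4r}(y))$ for the right-hand capacity, multiply by a Lipschitz cutoff $\eta\in C_c^\infty(B_{2\rho}(x_0))$ with $\eta\equiv 1$ on $\overline{B_\rho(x_0)}$ and $|\nabla\eta|\lesssim 1/\rho$, and expand
\[
\int|\nabla(u\eta)|^p\,dx\lesssim\int|\nabla u|^p\,dx+\frac{1}{\rho^p}\int_{B_{4r}(y)}|u|^p\,dx.
\]
Poincar\'e's inequality on $B_{4r}(y)$ bounds the second term by $(r/\rho)^p\int|\nabla u|^p\,dx$, which is a constant multiple since $r\asymp\rho$; as $u\eta$ is admissible for the left-hand capacity, the displayed inequality follows.

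Chaining the thickness lower bound, the ambient-ball comparison, set monotonicity, and the scaling identity for concentric-ball capacities then produces
\[
\cpt(\overline{B_{2r}(y)}\setminus\tO, B_{4r}(y))\geq\tilde{b}_1\,\cpt(\overline{B_{2r}(y)}, B_{4r}(y))
\]
with $\tilde{b}_1=\tilde{b}_1(\tilde{b}_0,\tr_0,n,p)>0$, exactly as claimed. The conceptually delicate point is the ambient-comparison step; everything else reduces to monotonicity and standard scaling of the variational $p$-capacity.
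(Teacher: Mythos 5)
The paper itself gives no proof of this lemma—it is cited directly from Parviainen—so there is nothing in the source to compare your argument against line by line; what you have written is, for the core case $r\leq 4\tilde r_0$, a correct reconstruction of the standard argument. The boundary point $x_0$, the inclusions $\overline{B_{r/4}(x_0)}\cap\tO^c\subset\overline{B_{2r}(y)}\setminus\tO$ and $B_{r/2}(x_0)\subset B_{4r}(y)$, the cutoff-and-Friedrichs bridge between the two ambient balls (the decisive point, since enlarging the ambient ball can only \emph{decrease} capacity, and you need the reverse comparison up to a constant), and the scaling $\cpt(\overline{B_s(z)},B_{2s}(z))\asymp s^{n-p}$ all fit together exactly as you chain them, with the resulting constant depending only on $\tilde b_0$, $n$, $p$ in that regime.

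Where you are too quick is the case $r>4\tilde r_0$: it cannot simply be ``absorbed into the constant via the fixed scale $\tilde r_0$.'' If you apply the thickness hypothesis at radius $\tilde r_0$ instead of $r/4$, the cutoff now lives at scale $\tilde r_0$ while Friedrichs is used on $B_{4r}(y)$, so the penalty is $(r/\tilde r_0)^p$, and moreover for $p<n$ the lower bound you obtain is $\gtrsim\tilde r_0^{\,n-p}$, which is \emph{not} comparable to the target $r^{n-p}$ as $r$ grows. To close this range one has to use the boundedness of $\tO$: once $r$ dominates $\diam\tO$, the set $\overline{B_{2r}(y)}\setminus\tO$ has measure comparable to $|B_{2r}|$, so its capacity in $B_{4r}(y)$ is $\gtrsim r^{n-p}$ directly; and in the intermediate band $4\tilde r_0<r\lesssim\diam\tO$ the penalty $(r/\tilde r_0)^p$ is bounded by $(\diam\tO/\tilde r_0)^p$. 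This does produce a valid constant, but one that depends on $\diam\tO$ as well—so either Parviainen's original statement restricts $r$ (to $r\leq r_0$ or similar), or the dependence list $\tilde b_1(b_0,r_0,n,p)$ as quoted is incomplete. In the paper's actual use of the lemma, $r$ is a local scale near $\pa\Om$, so the restricted range is what matters; but your write-up should not present the large-$r$ case as trivial.
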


Following the definition of $p$-thickness, a simple consequence of H\"older's and Young's inequality gives the following result (for example, see \cite[Lemma 3.13]{Par} for the proof):
\begin{lemma}
 Let $1 < p \leq n$ be given and suppose  a set $E \subset \RR^n$ is uniformly $p$-thick with constants $\tilde{b}_0,\tr_0$. Then $E$ is uniformly $q$-thick for all $q \geq p$ with constants $\tilde{b}_1, \tilde{r}_1$.
\end{lemma}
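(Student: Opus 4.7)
The plan is to reduce $q$-thickness to $p$-thickness via a single Hölder comparison applied to admissible test functions, combined with the standard scaling identity $\cpt[p](\overline{B_r(x)}, B_{2r}(x)) \asymp_{n,p} r^{n-p}$ valid for $1 < p \leq n$. The hypothesis $q \geq p$ is precisely what makes the relevant Hölder inequality go in the right direction.

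Fix $x \in \partial E$ and $r \in (0, \tilde{r}_0]$, and set $K := \overline{B_r(x)} \cap E$ and $B := B_{2r}(x)$. If $\phi \in C_c^\infty(B)$ is any admissible test function for the $q$-capacity of $K$ in $B$, then the same $\phi$ is simultaneously admissible for the $p$-capacity of this pair. Applying Hölder's inequality with exponents $q/p$ and $q/(q-p)$ (the case $q=p$ being trivial) gives
$$
\int_B |\nabla \phi|^p \, dy \;\leq\; \left( \int_B |\nabla \phi|^q \, dy \right)^{p/q} |B|^{(q-p)/q}.
$$
Taking the infimum over such admissible $\phi$ on both sides yields
$$
\cpt[p](K, B) \;\leq\; C(n)\, r^{n(q-p)/q}\, \bigl[\cpt[q](K, B)\bigr]^{p/q}.
$$

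To close, I would invoke the $p$-thickness lower bound on the left together with the scaling identity for concentric balls: $\cpt[p](K, B) \geq \tilde{b}_0\, \cpt[p](\overline{B_r(x)}, B) \geq c(n,p)\, \tilde{b}_0\, r^{n-p}$. Solving the previous display for $\cpt[q](K,B)$ and tracking the exponent of $r$ --- the arithmetic $-\tfrac{n(q-p)}{p} + \tfrac{(n-p)q}{p} = n-q$ makes all the powers collapse --- one arrives at
$$
\cpt[q](K, B) \;\geq\; c(n,p,q)\, \tilde{b}_0^{q/p}\, r^{n-q} \;\geq\; \tilde{b}_1\, \cpt[q](\overline{B_r(x)}, B),
$$
with $\tilde{b}_1 = \tilde{b}_1(n,p,q,\tilde{b}_0)$ and $\tilde{r}_1 = \tilde{r}_0$. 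The argument is monotone in the structural constants and presents no real obstacle; the only genuine check is that the powers of $r$ generated by the Hölder step cancel against those in the two scaling identities for $\cpt[p]$ and $\cpt[q]$, which is precisely why the authors flag the result as a consequence of Hölder and Young.
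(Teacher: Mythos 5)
Your proof is correct, and it matches the route the paper indicates: the paper does not reproduce a proof but cites Parviainen's thesis and characterizes the argument as ``a simple consequence of H\"older's and Young's inequality,'' which is precisely the comparison you run (Young's inequality is not strictly needed in your formulation --- H\"older plus the scaling identity $\cpt[s](\overline{B_r},B_{2r}) \asymp_{n,s} r^{n-s}$ suffices, and your exponent bookkeeping $n-p-\tfrac{n(q-p)}{q}=\tfrac{p(n-q)}{q}$ is right). One small remark worth adding for completeness: the estimate is genuinely informative only for $p \le q \le n$; for $q>n$ the paper already notes every nonempty set is uniformly $q$-thick, and your argument still closes formally there since the scaling identity $\cpt[q](\overline{B_r},B_{2r}) \asymp r^{n-q}$ persists.
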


We remark that the 
notion of $p$-thickness is self improving (see \cite{Le88} or  \cite{Anc,Mik} for the details):
\begin{theorem}[\cite{Le88}]
\label{self_improv_cap}
 Let $1 < p \leq n$ be given and suppose a set $E \subset \RR^n$ is uniformly $p$-thick with constants $\tilde{b}_0,\tr_0$. Then there exists an exponent $q = q(n,p,b_0)$ with $1<q<p$ for which $E$ is uniformly $q$-thick with constants $\tilde{b}_1, \tr_1$. 
\end{theorem}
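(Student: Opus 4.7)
The statement is J.~L.~Lewis's self-improving $p$-thickness theorem; its proof is genuinely subtle, but here is the strategy I would follow.

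For each boundary point $x$ and radius $r \in (0, \tilde{r}_0]$, associate the $p$-capacitary potential $u = u_{x,r}$ of the obstacle $K_{x,r} := \overline{B_r(x)} \cap E$ inside $B_{2r}(x)$: this minimizes $\int |\nabla \phi|^p$ among admissible competitors, is $p$-harmonic in $B_{2r}(x) \setminus K_{x,r}$, equals $1$ on $K_{x,r}$ and vanishes on $\partial B_{2r}(x)$. By the variational definition, $\capacity_p(K_{x,r}, B_{2r}(x)) = \int_{B_{2r}(x)} |\nabla u|^p \, dx$, so the hypothesis of $p$-thickness is exactly the quantitative lower bound
\[
\int_{B_{2r}(x)} |\nabla u|^p \, dx \geq c(n, p, \tilde{b}_0)\, r^{n-p}.
\]

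Next, I would establish a reverse Hölder inequality for $|\nabla u|$ on sub-balls of $B_{2r}(x)$. The ingredients are a standard Caccioppoli inequality for the $p$-harmonic function $u$ (testing with cut-off competitors that respect the obstacle and the boundary values) together with a Sobolev--Poincaré inequality. The latter is exactly where $p$-thickness is decisive: on balls that intersect $\{u = 1\}$ or $\{u = 0\}$, the capacity density condition upgrades the standard Poincaré inequality into one with vanishing boundary trace. Combining the two yields
\[
\left( \frac{1}{|B_\rho|} \int_{B_\rho(y)} |\nabla u|^p \, dx \right)^{1/p} \leq C \left( \frac{1}{|B_{2\rho}|} \int_{B_{2\rho}(y)} |\nabla u|^{q_0} \, dx \right)^{1/q_0}
\]
for some $q_0 < p$ and every $B_{2\rho}(y) \subset B_{2r}(x)$. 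An application of Gehring's lemma then delivers $|\nabla u| \in L^{p+\delta}_{\mathrm{loc}}$ for some $\delta = \delta(n, p, \tilde{b}_0) > 0$, with quantitative bounds.

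The last and most delicate step is to convert this improved integrability of the $p$-capacitary potential into a lower bound for the $q$-capacity for some $q < p$. The guiding idea is to interpret $|\nabla u|^p$ as a Muckenhoupt-type weight on $B_{2r}(x)$: the reverse Hölder inequality above is equivalent to an $A_\infty$ condition on that weight, and the self-improving property of $A_\infty$ weights produces an exponent $q \in (1, p)$ for which analogous reverse Hölder bounds hold with $p$ replaced by $q$. Comparing $u$ with the $q$-capacitary potential of $K_{x,r}$ in $B_{2r}(x)$ via Hölder's inequality then translates back to $\capacity_q(K_{x,r}, B_{2r}(x)) \gtrsim r^{n-q}$, which is the desired uniform $q$-thickness.

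I expect this last transfer to be the principal obstacle: higher integrability of $|\nabla u|$ is a direct Gehring phenomenon once the reverse Hölder inequality is in hand, but promoting it to a self-improvement of the capacity density condition itself requires the weight-theoretic detour through $A_\infty$, which is the main content of Lewis's argument and the reason the result is merely quoted here rather than reproduced.
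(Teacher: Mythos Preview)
The paper does not prove this theorem at all: it is stated with a citation to \cite{Le88} (and \cite{Anc,Mik}) and then used as a black box, so there is no ``paper's own proof'' to compare your sketch against. You yourself note this in your final sentence.

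Your outline is a reasonable caricature of how the self-improvement is obtained, and the broad architecture---capacitary potential, Caccioppoli plus capacity-weighted Poincar\'e to get a reverse H\"older inequality, then a Gehring-type self-improvement---is correct. One point to sharpen: the reverse H\"older step yields $|\nabla u|\in L^{p+\delta}$, i.e.\ integrability \emph{above} $p$, whereas what you need is a lower bound on $q$-capacity for some $q<p$. The passage between the two is not a simple H\"older comparison of potentials; in Lewis's original argument it goes through a Hardy-type inequality (equivalently, pointwise Wolff-potential estimates), and the $A_\infty$ interpretation you invoke is a later, cleaner repackaging. If you were actually writing the proof, that is the step requiring real care---your description of it as ``the principal obstacle'' is accurate, but the mechanism you sketch (compare $u$ with the $q$-potential via H\"older) is too vague to carry the argument on its own.
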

We next state a generalized Sobolev-Poincar\'e's inequality which was originally obtained by V. Maz'ya \cite[Sec. 10.1.2]{Maz} (see also    \cite[Sec. 3.1]{KK94} and \cite[Corollary 8.2.7]{AH}). 
\begin{theorem} \label{sobolev-poincare} Let $B$ be a ball and $\phi \in W^{1,p}(B)$ be $p$-quasicontinuous, with $p>1$. Let $\kappa=n/(n-p)$ if $1<p<n$ and $\kappa=2$ if 
$p=n$. Then there exists a constant $c = c(n, p) > 0$ such that 
\begin{equation}\label{sob_poin_est}
 \left( \hint_B |\phi|^{\kappa p} \, dx\right)^{\frac{1}{\kappa p}} \leq c \left( 
\frac{1}{{\rm cap}_{1,p} (\overline{N(\phi)}, 2B ) } \int_B |\nabla \phi|^p \, dx\right)^{\frac{1}{p}},
\end{equation}
where $N(\phi)=\{x \in B: \phi (x) = 0 \}$.
\end{theorem}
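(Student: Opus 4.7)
The plan is to split $\phi$ into its mean and oscillation, apply the classical Sobolev-Poincar\'e inequality to the oscillation, and bound the mean via a capacity argument using the hypothesis that $\phi$ vanishes on $N(\phi)$. Writing $\phi_B := \hint_B \phi \, dx$ and using the triangle inequality in $L^{\ka p}(B)$,
\begin{equation*}
\left( \hint_B |\phi|^{\ka p} \, dx \right)^{1/(\ka p)} \leq \left( \hint_B |\phi - \phi_B|^{\ka p} \, dx \right)^{1/(\ka p)} + |\phi_B|,
\end{equation*}
the first summand is controlled by the classical $W^{1,p} \hookrightarrow L^{\ka p}$ Sobolev-Poincar\'e inequality, yielding the bound $C r \left( \hint_B |\nabla \phi|^p \, dx \right)^{1/p}$, where $r$ denotes the radius of $B$.

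The substantive step is to establish the capacity Poincar\'e-type estimate
\begin{equation*}
|\phi_B|^p \, \cpt(\overline{N(\phi)}, 2B) \leq C \int_B |\nabla \phi|^p \, dx. \qquad (\star)
\end{equation*}
Assuming without loss of generality $\phi_B > 0$ and setting $c := \phi_B$, the natural competitor is $v := (1 - \phi/c)_+$: by $p$-quasicontinuity of $\phi$ this function lies in $W^{1,p}(B) \cap [0,1]$ and equals $1$ quasi-everywhere on $\overline{N(\phi)}$, with $|\nabla v| \leq |\nabla \phi|/c$ on $B$. Multiplying by a cutoff $\eta \in C_c^{\infty}(2B)$ with $\eta \equiv 1$ on $B$ and $|\nabla \eta| \leq C/r$, and extending $v$ across $\partial B$ (by truncation and constant extension) so that $\eta v \in W^{1,p}_0(2B)$ becomes an admissible test function for $\cpt(\overline{N(\phi)}, 2B)$, we obtain
\begin{equation*}
\cpt(\overline{N(\phi)}, 2B) \leq \int_{2B} |\nabla(\eta v)|^p \, dx.
\end{equation*}
Expanding the right-hand side, using $|\nabla v| \leq |\nabla \phi|/c$, and controlling the cutoff contribution $r^{-p} \int v^p$ via the standard Poincar\'e inequality applied to $\phi - c$ on $B$ then yields $(\star)$.

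To combine the two pieces, monotonicity of capacity and the scaling bound $\cpt(\overline{B}, 2B) \leq C r^{n-p}$ (with the analogous statement when $p = n$) give $r^p \leq C |B|/\cpt(\overline{N(\phi)}, 2B)$, so the contribution $r \left( \hint_B |\nabla \phi|^p \right)^{1/p}$ from the classical Sobolev-Poincar\'e step is dominated by $C \left( \cpt(\overline{N(\phi)}, 2B)^{-1} \int_B |\nabla \phi|^p \right)^{1/p}$, and combining with $(\star)$ produces the desired estimate \eqref{sob_poin_est}. I expect the main obstacle to be the construction in $(\star)$: since $\phi$ is defined only on $B$ while the capacity is measured in $2B$, the competitor $v$ must be extended across $\partial B$ in a way that keeps $\eta v$ in $W^{1,p}_0(2B)$ without producing uncontrolled distributional boundary terms and while preserving the quasi-everywhere equality $\eta v = 1$ on $\overline{N(\phi)}$.
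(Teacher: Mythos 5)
The key step $(\star)$ you propose is nonstandard and, as written, the argument to prove it is circular. You need to control $r^{-p}\int_{2B\setminus B}(\eta v)^p \,dx$, which (after the extension) reduces to controlling $r^{-p}\int_B v^p$ and hence $r^{-p}\int_B (1 - \phi/c)_+^p$. But $\phi/c$ need not be close to $1$ in any pointwise sense; the quantity $r^{-p}\int_B(1-\phi/c)_+^p$ is of size $r^{n-p}\cdot c^{-p}\cdot \hint_B(c-\phi)_+^p$, and the classical Poincar\'e inequality only gives $\hint_B(c-\phi)_+^p \le \hint_B|\phi-\phi_B|^p \lesssim r^p\hint_B|\nabla\phi|^p$. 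Plugging this in, the cutoff term is bounded by $C r^{n-p}c^{-p}\cdot r^p\hint_B|\nabla\phi|^p = C c^{-p}|B|\hint_B|\nabla\phi|^p = Cc^{-p}\int_B|\nabla\phi|^p$, which is \emph{the same size} as the term $\int_{2B}|\nabla(\eta v)|^p$ you are trying to bound by $\cpt(\overline{N(\phi)},2B)$; it does not give you the capacity term on the left of $(\star)$, it only reproduces the right-hand side. In other words, expanding $|\nabla(\eta v)|^p \le C\bigl(c^{-p}|\nabla\phi|^p + r^{-p}v^p\bigr)$ and using Poincar\'e gives $\cpt(\overline{N(\phi)},2B)\le Cc^{-p}\int_B|\nabla\phi|^p$ directly, which is exactly $(\star)$. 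So the structure of the argument is actually fine once you notice that the cutoff term closes by Poincar\'e into the same bound --- you should make this explicit rather than presenting it as a separate ``controlling'' step, since as phrased it looks like you need an a priori smallness that you do not have.

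A second issue is the extension across $\partial B$. You cannot simply ``extend by a constant'': $v=(1-\phi/c)_+$ restricted to $B$ does not have a canonical boundary value, and an arbitrary Sobolev extension of $v$ to $2B$ need not remain in $[0,1]$ or satisfy the pointwise gradient bound $|\nabla v|\le|\nabla\phi|/c$ outside $B$. What one actually does (and what the cited references, Maz'ya and Adams--Hedberg, do) is extend $\phi$ first: use a bounded $W^{1,p}$-extension operator $E:W^{1,p}(B)\to W^{1,p}(2B)$ with $\|\nabla(E\phi)\|_{L^p(2B)}\le C\|\nabla\phi\|_{L^p(B)}$ (this is where the ball geometry enters), \emph{then} form $(1-E\phi/c)_+$, truncate to $[0,1]$, and multiply by the cutoff. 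The truncation and the cutoff are the operations that keep everything admissible; simply extending $v$ itself is not well-defined. With that correction, and the observation above about how the cutoff term closes, your outline matches the standard Maz'ya-type capacitary Poincar\'e argument that the paper cites (the paper gives no proof, just references \cite{Maz}, \cite{KK94}, \cite{AH}). The overall decomposition (oscillation controlled by classical Sobolev--Poincar\'e plus mean controlled by capacity, assembled via $\cpt(\overline B,2B)\approx r^{n-p}$) is indeed the standard route, so your approach is essentially the one in the literature.
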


%


\begin{lemma}
\label{interpolation_capacity_version_main}
Let $B_{\rho}$ be a ball and let   $1 < \sig < \infty$ ,  $1< \vartheta \leq n$  be any two exponents. Let $g $ be any function such that the following assumption holds: 
$$\cpt[\vartheta] (\overline{N(g)},2B) \geq c_0 \cpt[\vartheta] (\overline{B},2B) = c_0 \rho^{n-p}, \quad \text{where} \ N(g) = \{ x \in B : g(x) = 0\}. $$
Let $\tal \in (0,1)$ and $1<\tq,\tr<\infty$  be given such that the exponents are all related by the relations:
\begin{gather}
 \tal \sig < \tq \leq \vartheta^* = \frac{n \vartheta}{n-\vartheta}   \qquad \text{and} \qquad \tr:={\frac{(1-\tal) \sig \tq}{ \tq - \tal \sig}} \Longleftrightarrow \tq:= \frac{\tal \sig \tr}{\tr - (1-\tal)\sig}. \label{lemma1.7.1}
\end{gather}
which is equivalent to the relation
\begin{gather*}
     - \frac{n}{\sig} \leq \tal\lbr 1 - \frac{n}{\vartheta} \rbr  - (1-\tal) \frac{n}{\tr}. \nonumber 
\end{gather*}
Then the following interpolation type estimate holds:
\[
 \hint_B \lbr \frac{|g|}{\rho}\rbr^{\sig} \ dx  \leq C_{(n,\sig,\vartheta,\tr,c_0)} \lbr \hint_B|\nabla g|^{ \vartheta}  \ dx \rbr^{\frac{\tal \sig}{\vartheta}} \lbr \hint_B \lbr \frac{|g|}{\rho}\rbr^{\tr} \ dx \rbr^{\frac{(1-\tal)\sig}{\tr}}.
\]
\end{lemma}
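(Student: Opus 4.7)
\bigskip
\noindent\textbf{Proof plan.}

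The plan is to obtain the desired interpolation inequality by combining H\"older's inequality on $L^\sigma$ with the Sobolev--Poincar\'e estimate in Theorem~\ref{sobolev-poincare}, which is where the hypothesis on $\cpt[\vartheta]\bigl(\overline{N(g)},2B\bigr)$ enters. The role of the condition $\tal\sig<\tq\le \vartheta^{*}$ is precisely to provide a valid pair of H\"older exponents that split $\sigma$ into the part to be improved by Sobolev embedding and the part that will be absorbed into the $L^{\tr}$ norm.

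First, I would split the integrand as
\[
|g|^{\sigma}=|g|^{\tal\sig}\cdot|g|^{(1-\tal)\sig}
\]
and apply H\"older's inequality on $B$ with dual exponents $\tq/(\tal\sig)$ and $\tq/(\tq-\tal\sig)$, noting that both are admissible by the assumption $\tal\sig<\tq$. This produces
\[
\hint_{B}|g|^{\sigma}\,dx
\le
\left(\hint_{B}|g|^{\tq}\,dx\right)^{\!\tal\sig/\tq}
\left(\hint_{B}|g|^{(1-\tal)\sig\,\tq/(\tq-\tal\sig)}\,dx\right)^{\!(\tq-\tal\sig)/\tq},
\]
and the exponent of $|g|$ in the second factor equals $\tr$ by the defining relation \eqref{lemma1.7.1}; moreover $(\tq-\tal\sig)/\tq=(1-\tal)\sig/\tr$, again by \eqref{lemma1.7.1}.

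Next I would upgrade the $\tq$-average on the right by Jensen/H\"older (using $\tq\le \vartheta^{*}$) to a $\vartheta^{*}$-average, and apply Theorem~\ref{sobolev-poincare} with $p=\vartheta$, $\kappa=n/(n-\vartheta)$ (or $\kappa=2$ if $\vartheta=n$). Using the capacity lower bound assumed on $N(g)$, namely $\cpt[\vartheta](\overline{N(g)},2B)\ge c_{0}\cpt[\vartheta](\overline{B},2B)\simeq c_{0}\rho^{\,n-\vartheta}$, together with $\int_{B}\simeq\rho^{n}\hint_{B}$, I would get
\[
\left(\hint_{B}|g|^{\tq}\,dx\right)^{\!1/\tq}
\le
\left(\hint_{B}|g|^{\vartheta^{*}}\,dx\right)^{\!1/\vartheta^{*}}
\le
C\,\rho\left(\hint_{B}|\nabla g|^{\vartheta}\,dx\right)^{\!1/\vartheta},
\]
where $C$ depends only on $n,\vartheta,c_{0}$. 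Raising this to the power $\tal\sig$ gives a factor $\rho^{\,\tal\sig}$, which combines with the factor $\rho^{-\sigma}$ (from dividing the left-hand side by $\rho^{\sigma}$) and the factor $\rho^{-(1-\tal)\sig}$ (from writing the last factor as an average of $(|g|/\rho)^{\tr}$) so that the $\rho$-powers cancel exactly, yielding the scale-invariant conclusion.

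The main (and really only) point requiring care is the bookkeeping: one must check both that the H\"older exponents $\tq/(\tal\sig)$ and $\tq/(\tq-\tal\sig)$ are strictly greater than $1$ (which is where the strict inequality $\tal\sig<\tq$ is used), and that the $\rho$-powers produced by the capacity-based Sobolev--Poincar\'e estimate match those needed to turn bare integrals of $|g|$ into averages of $|g|/\rho$. Both are immediate consequences of the relations in \eqref{lemma1.7.1}, so the proof reduces to the two-step scheme above with no further technical obstacles.
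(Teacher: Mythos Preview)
Your proposal is correct and follows essentially the same approach as the paper's proof: H\"older's inequality to split $|g|^{\sigma}$ into a $\tq$-piece and an $\tr$-piece (using the relation \eqref{lemma1.7.1}), then Jensen to pass from $\tq$ to $\vartheta^{*}$, and finally the capacity Sobolev--Poincar\'e estimate of Theorem~\ref{sobolev-poincare} together with the assumed lower bound $\cpt[\vartheta](\overline{N(g)},2B)\gtrsim c_{0}\rho^{\,n-\vartheta}$ to convert the $\vartheta^{*}$-average of $|g|$ into a $\vartheta$-average of $|\nabla g|$, with the $\rho$-powers matching exactly as you checked. The paper simply compresses your first two steps into a single displayed inequality and then remarks separately that the endpoint case $\vartheta=n$ is easier since $W^{1,n}$ embeds into every $L^{t}$, $t<\infty$; you handle this implicitly via the parenthetical ``or $\kappa=2$ if $\vartheta=n$'', which is fine since in that case $\tq<\infty$ can be reached by iterating the embedding.
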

\begin{proof}
Let us first consider the case $\vartheta<n$. The proof follows by applying Theorem \ref{sobolev-poincare} as follows: let $\tal \sig < \tq \leq \vartheta^* = \frac{n \vartheta}{n-\vartheta}$, then making use of \eqref{lemma1.7.1},  we obtain
\begin{equation*}
 \begin{array}{ll}
  \hint_B \lbr \frac{|g|}{\rho}\rbr^{\sig} \ dx 
  & \apprle \lbr \hint_B \lbr \frac{|g|}{\rho} \rbr^{ \vartheta^*} \ dx \rbr^{\frac{\tal \sig}{\vartheta^*}} \lbr \hint_B \lbr \frac{|g|}{\rho}\rbr^{\tr} \ dx \rbr^{\frac{(1-\tal)\sig}{\tr}} \\
  & \apprle \lbr \hint_B |\nabla g|^{ \vartheta} \ dx \rbr^{\frac{\tal \sig}{\vartheta}} \lbr \hint_B \lbr \frac{|g|}{\rho}\rbr^{\tr} \ dx \rbr^{\frac{(1-\tal)\sig}{\tr}}.
 \end{array}
\end{equation*}

%
%
%
 In the case $\vartheta =n$ (and also $\vartheta >n$), we observe that $W^{1,\vartheta}$ embeds into $L^t$ for all $t< \infty$ and the proof becomes simpler.
\end{proof}

\begin{definition}
 \label{p_thick_domain}
 In this paper, we shall assume that the domain $\Om$ is bounded and that it's complement $\Om^c$ is uniformly $p$-thick with constants $b_0,r_0$ in the sense of Definition of \ref{p_thick}.
\end{definition}

\subsection{Assumptions on the Nonlinear structure}

We shall now collect the assumptions on the nonlinear structure in \eqref{main}. 
We assume that $\aa(x,t,\nabla u)$ is an Carath\'eodory function, i.e., we have $(x,t) \mapsto \aa(x,t,\zeta)$  is measurable for every $\zeta \in \RR^n$ and 
$\zeta \mapsto \aa(x,t,\zeta)$ is continuous for almost every  $(x,t) \in \Om \times (-T,T)$.

We further assume that for a.e. $(x,t) \in \Om \times (-T,T)$ and for any $\zeta \in \RR^n$, there exists two given positive constants $\lamot$ such that  the following bounds are satisfied   by the nonlinear structures :
\begin{gather}
 \iprod{\aa(x,t,\zeta)}{\zeta} \geq \La_0 |\zeta |^p  - h_1  \txt{and} |\aa(x,t,\zeta)| \leq \La_1  |\zeta|^{p-1} + h_2, \label{abounded}
\end{gather}
where, the functions $h_1, h_2: \Om \times (-T,T) \mapsto \RR$ are assumed to be  measurable  with bounded norm 
\begin{equation}
 \label{bound_b}
 \th^p:=|h_1| + |h_2|^{\frac{p}{p-1}} \txt{and} \| \th\|_{L^{\hat{q}}(\Om \times (-T,T))} < \infty \quad \text{ for some  } \hat{q} \geq p+\be_0,
\end{equation}
where $\be_0$ is from Theorem \ref{main_theorem} (for example, $\be_0 = 1$ is admissible in  \eqref{bound_b}).  \emph{An important aspect of the estimates obtained in this paper is that we do not make any  assumptions regarding the smoothness of $\aa(x,t,\zeta)$ with respect to $x,t,\zeta$}.

As the basic sets for our estimates, we will use parabolic cylinders where the radii in space and time are coupled. This is due to the fact that in the case that $p \neq 2$, the size of the cylinders implicitly depend on the solution itself and cannot be taken independently. This difficulty extends to the problems dealing with \emph{very weak} solutions also. 

In what follows, we will always assume 
\begin{equation}
 \label{restriction_p}
 \frac{2n}{n+2} < p < \infty.
\end{equation}

\begin{remark}The restriction in \eqref{restriction_p} is necessary when dealing with parabolic problems because of the embedding $W^{1,\frac{2n}{n+2}} \hookrightarrow L^2$ due to which,  we will invariably have to deal with the $L^2$-norm of the solution which comes from the time-derivative. On the other hand, if we assume $u \in L^{r}(\Om \times (-T,T))$ for some $r\geq 1$ such that $n(p-2) + rp >0$ (see \cite[Chapter 5]{DiB1} for more on this), then we can obtain analogous result as to Theorem \ref{main_theorem}. This extension of Theorem \ref{main_theorem} to the case $1< p \leq \frac{2n}{n+2}$ is beyond the scope of this paper and will be pursued elsewhere. \end{remark}

\subsection{Function Spaces}\label{function_spaces}

Let $1\leq \vt < \infty$, then $W_0^{1,\vt}(\Om)$ denotes the standard Sobolev space which is the completion of $C_c^{\infty}(\Om)$ under the $\|\cdot\|_{W^{1,\vt}}$ norm. 

The parabolic space $L^{\vt}(-T,T; W^{1,\vt}(\Om))$ for any $\vt \in (1,\infty)$ is the collection of measurable functions $f(x,t)$ such that for almost every $t \in (-T,T)$, the function $x \mapsto f(x,t)$ belongs to $W^{1,\vt}(\Om)$ with the following norm being finite:
\[
 \| f\|_{L^{\vt}(-T,T; W^{1,\vt}(\Om)} := \lbr \int_{-T}^T \| u(\cdot, t) \|_{W^{1,\vt}(\Om)}^{\vt} \ dt \rbr^{\frac{1}{\vt}} < \infty.
\]

Analogously, the parabolic space $L^{\vt}(-T,T; W_0^{1,\vt}(\Om))$ is the collection of measurable functions $f(x,t)$ such that for almost every $t \in (-T,T)$, the function $x \mapsto f(x,t)$ belongs to $W_0^{1,\vt}(\Om)$.

\subsection{Notations}

We shall clarify the notation that will be used throughout the paper: 
\begin{enumerate}[(i)]
    
 \item\label{not1} We shall use $\nabla$ to denote derivatives with respect the space variable $x$.
\item\label{not2} We shall sometimes alternate between using $\ddt{f}$, $\pa_t f$ and $f'$ to denote the time derivative of a function $f$.
 \item\label{not3} We shall use $D$ to denote the derivative with respect to both the space variable $x$ and time variable $t$ in $\RR^{n+1}$. 
 \item\label{not4}  Let $z_0 = (x_0,t_0) \in \RR^{n+1}$ be a point and $\rho, s >0$ be two given parameters and let $\al \in (0,\infty)$. We shall use the following symbols to denote the following regions:
 \begin{equation}\label{notation_space_time}
\def\arraystretch{1.5}
 \begin{array}{ll}
 \tm_s(t_0) := (t_0 - s, t_0+s) \subset \RR,
& \qquad Q_{\rho,s}(z_0) := B_{\rho}(x_0) \times \tm_{s}(t_0) \subset \RR^{n+1},\\ 
 \al Q_{\rho,s}(z_0) := B_{\al \rho}(x_0) \times \tm_{\al^2s}(t_0)  \subset \RR^{n+1},
 &\qquad \mch_s(t_0) := \RR^n \times \tm_s(t_0) \subset \RR^{n+1},\\ 
 \alpha \mch_s(t_0) := \RR^n \times \tm_{\alpha^2 s}(t_0) \subset \RR^{n+1},
 & \qquad\mcc_{\rho}(x_0) := \Om \cap B_{\rho}(x_0) \times \RR \subset \RR^{n+1},\\
\Om_{\rho,s}(z_0) := \Om \cap B_{\rho}(x_0) \times \tm_s(t_0) \subset \RR^{n+1}, 
& \qquad\Om_{\rho}(x_0) := \Om \cap B_{\rho}(x_0) \subset \RR^{n}.
 \end{array}
\end{equation}

\item\label{not5} We shall use $\int$ to denote the integral with respect to either space variable or time variable and use $\iint$ to denote the integral with respect to both space and time variables simultaneously. 

Analogously, we will use $\fint$ and $\fiint$ to denote the average integrals as defined below: for any set $A \times B \subset \RR^n \times \RR$, we define
\begin{gather*}
\avgs{A}{f}:= \fint_A f(x) \ dx = \frac{1}{|A|} \int_A f(x) \ dx,\\
\avgs{A\times B}{f}:=\fiint_{A\times B} f(x,t) \ dx \ dt = \frac{1}{|A\times B|} \iint_{A\times B} f(x,t) \ dx \ dt.
\end{gather*}

\item\label{not6} Given any positive function $\mu$, we shall denote $\avgs{\mu}{f} := \int f\frac{\mu}{\|\mu\|_{L^1}}dm$ where the domain of integration is the domain of definition of $\mu$ and $dm$ denotes the associated measure. 

\item\label{not7} Given any $\la> 0$, we can convert $\RR^{n+1}$ into a metric space where the parabolic cylinders correspond to \emph{balls} under the parabolic metric given by:
\begin{equation}
 \label{parabolic_metric}
d_{\la} (z_1,z_2) := \max \left\{ |x_2-x_1|, \sqrt{\la^{p-2}|t_2-t_1|} \right\}.
\end{equation}

\item\label{not8} In what follows, $r_0$ and $b_0$ will denote the constants arising from the assumption that  $\Om^c$ is uniformly $p$-thick (see Definition \ref{p_thick_domain}).
\end{enumerate}

\subsection{Very weak solution}

There is a well known difficulty in defining the notion of solution for \eqref{main} due to a lack of time derivative of $u$. To overcome this, one can either use Steklov average or convolution in time. In this paper, we shall use the former approach (see also \cite[Page 20, Eqn (2.5)]{DiB1} for further details).

Let us first define Steklov average as follows: let $h \in (0, 2T)$ be any positive number, then we define
\begin{equation}\label{stek1}
  u_{h}(\cdot,t) := \left\{ \begin{array}{ll}
                              \hint_t^{t+h} u(\cdot, \tau) \ d\tau \quad & t\in (-T,T-h), \\
                              0 & \text{else}.
                             \end{array}\right.
 \end{equation}

\begin{definition}[Very weak solution] 
\label{very_weak_solution}
 
 Let $ \be \in (0,1)$ and $h \in (0,2T)$ be given and suppose $p-\be > 1$. We  then say $u \in L^2(-T,T; L^2(\Om)) \cap L^{p-\be}(-T,T; W_0^{1,p-\be}(\Om))$ is a very weak solution of \eqref{main} if for any $\phi \in W_0^{1,\frac{p-\be}{1-\be}}(\Om) \cap L^{\infty}(\Om)$, the following holds:
 \begin{equation}
 \label{def_weak_solution}
  \int_{\Om \times \{t\}} \frac{d [u]_{h}}{dt} \phi + \iprod{[\aa(x,t,\nabla u)]_{h}}{\nabla \phi} \ dx = 0 \txt{for any}-T < t < T-h.
 \end{equation}

\end{definition}

%

 \subsection{Maximal Function}

For any $f \in L^1(\RR^{n+1})$, let us now define the strong maximal function in $\RR^{n+1}$ as follows:
\begin{equation*}
 \label{par_max}
 \mm(|f|)(x,t) := \sup_{\tQ \ni(x,t)} \fiint_{\tQ} |f(y,s)| \ dy \ ds
\end{equation*}
where the supremum is  taken over all parabolic cylinders $\tQ_{a,b}$ with $a,b \in \RR^+$ such that $(x,t)\in \tQ_{a,b}$. An application of the Hardy-Littlewood maximal theorem in $x-$ and $t-$ directions shows that the Hardy-Littlewood maximal theorem still holds for this type of maximal function (see \cite[Lemma 7.9]{Gary} for details):
\begin{lemma}
\label{max_bound}
 If $f \in L^1(\RR^{n+1})$, then for any $\al >0 $, there holds
 \[
  |\{ z \in \RR^{n+1} : \mm(|f|)(z) > \al\}| \leq \frac{5^{n+2}}{\al} \|f\|_{L^1(\RR^{n+1})}.
 \]

 and if $f \in L^{\vartheta}(\RR^{n+1})$ for some $1 < \vartheta \leq \infty$, then there holds
 \[
  \| \mm(|f|) \|_{L^{\vartheta}(\RR^{n+1})} \leq C_{(n,\vartheta)} \| f \|_{L^{\vartheta}(\RR^{n+1})}.
 \]

\end{lemma}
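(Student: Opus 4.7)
The plan is to reduce $\mm$ to the classical Hardy--Littlewood maximal operators in each variable by a Fubini factorization and then prove the strong bound by iteration and the weak bound by a Vitali covering argument. Every parabolic cylinder factors as a product $\tQ_{a,b}(y_0,s_0) = B_a(y_0) \times (s_0-b, s_0+b)$, so
\[
\fiint_{\tQ_{a,b}}|f(y,s)|\,dy\,ds
\;=\; \fint_{s_0-b}^{s_0+b}\!\Bigl(\fint_{B_a(y_0)}|f(y,s)|\,dy\Bigr)\,ds.
\]
Taking the inner sup over balls $B_a(y_0)\ni x$ and the outer sup over time-intervals containing $t$ yields the pointwise domination
\[
\mm(|f|)(x,t) \;\leq\; M_t\bigl(M_x|f|\bigr)(x,t),
\]
where $M_x$ is the uncentered spatial Hardy--Littlewood maximal operator applied slicewise in $t$ and $M_t$ is the one-dimensional uncentered maximal operator in $t$.

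For the strong $L^{\vartheta}$ estimate with $1<\vartheta\leq\infty$, I would simply iterate the classical theorem: the $L^{\vartheta}(\RR^n)$-boundedness of $M_x$ applied slicewise in $t$ combined with Fubini gives $\|M_x f\|_{L^{\vartheta}(\RR^{n+1})} \leq C(n,\vartheta)\|f\|_{L^{\vartheta}(\RR^{n+1})}$, and a second application of the one-dimensional bound to $M_t$ slicewise in $x$ yields the asserted estimate with a constant depending only on $n$ and $\vartheta$.

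For the weak-$(1,1)$ inequality, the iterated composition cannot be used because weak-type bounds are not preserved under composition, so the argument has to be direct. Given $\alpha>0$, for each $z\in E_{\alpha}:=\{\mm(|f|)>\alpha\}$ select $\tQ_z\ni z$ with $\iint_{\tQ_z}|f|>\alpha|\tQ_z|$, and apply a Vitali $5r$-covering lemma adapted to parabolic cylinders to extract a pairwise disjoint subfamily $\{\tQ_k\}$ whose parabolic $5$-dilates cover $E_{\alpha}$. Using that the parabolic dilation scales time by $5^2$ and each of the $n$ spatial directions by $5$, so $|5\tQ_{a,b}|=5^{n+2}|\tQ_{a,b}|$, one concludes
\[
|E_{\alpha}| \;\leq\; 5^{n+2}\sum_k|\tQ_k| \;<\; \frac{5^{n+2}}{\alpha}\sum_k\iint_{\tQ_k}|f| \;\leq\; \frac{5^{n+2}}{\alpha}\|f\|_{L^1(\RR^{n+1})}.
\]
The main subtlety is the Vitali extraction itself: $\{\tQ_{a,b}\}$ is parametrized by two independent radii and so is not the ball basis of a single metric, which is precisely the geometry that makes the ordinary strong maximal function fail weak-$(1,1)$. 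The workaround I would use is to observe that each individual $\tQ_{a,b}$ coincides with a $d_\lambda$-ball (in the parabolic metric of \eqref{parabolic_metric}) for the unique $\lambda$ with $\lambda^{2-p}a^2 = b$, after which parabolic dilations agree with $d_\lambda$-dilations and the standard covering lemma on a space of homogeneous type applies.
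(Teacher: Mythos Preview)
Your iteration argument for the $L^\vartheta$ bound ($1<\vartheta\le\infty$) is correct and is exactly what the paper intends: no proof is given there beyond the remark that the result follows by ``an application of the Hardy--Littlewood maximal theorem in $x$- and $t$-directions,'' with a reference to \cite[Lemma~7.9]{Gary}.

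The weak-$(1,1)$ argument, however, has a genuine gap at precisely the point you flag as the ``main subtlety.'' Your workaround---that each $\tQ_{a,b}$ is a $d_\la$-ball for the $\la$ determined by $\la^{2-p}a^2=b$---does not save the Vitali step, because different cylinders in your selected family correspond to \emph{different} values of $\la$ and hence to balls in \emph{different} metrics. The Vitali $5r$-lemma needs all the balls to come from one fixed (quasi-)metric; there is no covering lemma that lets you mix balls drawn from an uncountable family of metrics and still extract a disjoint subfamily whose fixed dilates cover. In fact the inequality you are trying to prove is false as stated: with $a,b$ independent, $\mm$ is the two-parameter strong maximal function, for which the weak-$(1,1)$ bound is classically known to fail (Jessen--Marcinkiewicz--Zygmund); only an $L\log L\to L^{1,\infty}$ estimate holds. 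The constant $5^{n+2}$ and the weak-$(1,1)$ bound are valid for the \emph{one-parameter} parabolic maximal function (a single fixed $\la$), and since the paper only ever invokes Lemma~\ref{max_bound} through the strong bound with exponent $\tfrac{p-\be}{q}>1$ (see the proof of Lemma~\ref{bound_g_x_t}), the defect is harmless for the paper's purposes.
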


 \subsection{A few well known lemmas}\label{few_well_known_lemmas}

 We shall recall the following well known lemmas that will be used throughout the paper. 
 The first one is a standard lemma regarding integral averages (for a proof in this setting, see for example \cite[Chapter 8.2]{bogelein2007thesis} for the details).
\begin{lemma}
\label{time_average}
Let $\ga >0$ be any fixed number and suppose  $[f]_h(x,t) : = \hint_{t-\ga h^2}^{t+\ga h^2} f(x,\tau) \ d\tau$. Then we have the following properties:
\begin{enumerate}[(i)]
 \item $[f]_h \rightarrow f$ a.e $(x,t) \in \RR^{n+1}$ as $h \searrow 0$, $[f]_h$ is continuous and  bounded  in time for a.e. $x \in \RR^n$.
 \item For any cylinder $Q_{r, \ga r^2} \subset \RR^{n+1}$ with $r >0$, there holds
\[
  \fiint_{Q_{r,\ga r^2}} [f]_h(x,t) \ dx \ dt \apprle_n \fiint_{Q_{r,\ga(r+h)^2}}f(x,t) \ dx \ dt. 
 \]
 \item The function $[f]_h(x,t)$ is differentiable with respect to $t \in \RR$, moreover $[f]_h(\cdot, t) \in C^{1}(\RR)$ for a.e. $x \in \RR^n$. 
 \end{enumerate}
\end{lemma}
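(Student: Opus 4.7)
My plan is to dispatch the three parts in turn; each reduces to a classical Lebesgue-theoretic fact about integral averages.

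For (i), by Fubini the slice $\tau\mapsto f(x,\tau)$ lies in $L^1_{loc}(\RR)$ for almost every $x$, so the Lebesgue differentiation theorem gives $[f]_h(x,t)\to f(x,t)$ at every Lebesgue point, hence a.e.\ in $\RR^{n+1}$. The primitive $F_x(t):=\int_0^t f(x,\tau)\,d\tau$ is absolutely continuous in $t$ for a.e.\ $x$, so the identity
\[
[f]_h(x,t)=\frac{F_x(t+\ga h^2)-F_x(t-\ga h^2)}{2\ga h^2}
\]
immediately gives continuity and local boundedness of $t\mapsto[f]_h(x,t)$.

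For (ii), I would unwind the definition and apply Fubini together with an elementary set inclusion, namely
\[
\iint_{Q_{r,\ga r^2}}[f]_h\,dx\,dt=\frac{1}{2\ga h^2}\int_{B_r}\int_{-\ga r^2}^{\ga r^2}\int_{t-\ga h^2}^{t+\ga h^2} f(x,\tau)\,d\tau\,dt\,dx.
\]
After swapping the two innermost integrals, for each fixed $\tau$ the admissible $t$-slice has length at most $2\ga h^2$, while $\tau$ itself ranges over an interval contained in $(-\ga(r+h)^2,\ga(r+h)^2)$, since $\ga r^2+\ga h^2\leq \ga(r+h)^2$. Converting both sides into averages and absorbing the resulting ratio of cylinder measures into a dimensional constant yields the stated estimate.

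For (iii), pointwise differentiability of $t\mapsto[f]_h(x,t)$ follows from the representation via $F_x$: the Fundamental Theorem of Calculus gives
\[
\frac{d}{dt}[f]_h(x,t)=\frac{f(x,t+\ga h^2)-f(x,t-\ga h^2)}{2\ga h^2}
\]
wherever $t\pm\ga h^2$ are Lebesgue points of $f(x,\cdot)$, hence for almost every $t$. The claim of genuine $C^1$ regularity is the only delicate point, since a generic $L^1$ function has no classical pointwise continuity; I anticipate this is the main technical obstacle and would handle it by approximating $f$ in $L^1_{loc}$ by continuous functions and passing the $C^1$ property through uniform convergence of the corresponding Steklov averages on compact $t$-intervals, or else by interpreting the claim in the absolutely continuous sense, where it follows directly from the representation above.
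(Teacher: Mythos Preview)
The paper does not give its own proof of this lemma; it simply cites \cite[Chapter 8.2]{bogelein2007thesis} and moves on. Your sketch is exactly the standard argument one finds there, so there is nothing to compare. Part (i) via Lebesgue differentiation and the primitive $F_x$, and part (ii) via Fubini and the inclusion $(-\ga r^2-\ga h^2,\ga r^2+\ga h^2)\subset(-\ga(r+h)^2,\ga(r+h)^2)$, are both correct. Note only that in (ii) the ratio of cylinder measures you absorb is $(r+h)^2/r^2$, which is a genuine dimensional constant only once $h\le r$; this is harmless in the paper's applications (where $h\searrow 0$) but is implicit in the statement.

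Your caution about (iii) is well placed and is the one substantive point worth flagging: for a generic $f\in L^1_{loc}$ the map $t\mapsto [f]_h(x,t)$ is absolutely continuous with a.e.\ derivative $\frac{1}{2\ga h^2}\bigl(f(x,t+\ga h^2)-f(x,t-\ga h^2)\bigr)$, but genuine $C^1$ regularity fails in general. In the paper the lemma is only invoked for functions arising from the very weak solution $u$ and the nonlinearity $\aa(x,t,\nabla u)$, where the additional structure (and the way the time derivative is used inside integrals against smooth test functions) makes the absolutely continuous interpretation sufficient. Your proposed fix---either read $C^1$ as ``absolutely continuous with a.e.\ derivative given by the displayed formula'', or restrict to $f$ that are already locally bounded/continuous in time---is the right way to make (iii) precise.
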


The next Lemma we recall is a standard iteration lemma (see for example  \cite[Lemma 5.1]{Gia}):
\begin{lemma}
\label{iteration_lemma}	
 Let $0< \theta < 1$, $B \geq 0$, $A \geq 0$, $\tal >0$ and $0 < r < \rho < \infty$ and let $f \geq 0$ be a bounded measurable function satisfying 
 \[
  f(r_1) \leq \theta f(r_2) + A(r_2-r_1)^{-\tal} + B \qquad \forall \ r < r_1 < r_2 < \rho,
 \]
then there exists a constant $C=C(\tal,A,B)$ such that the following holds:
\[
 f(r) \leq C (A (\rho - r)^{-\tal} + B), \qquad \text{ for every }\ r < \rho.
\]

\end{lemma}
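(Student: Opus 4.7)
The plan is the standard geometric-iteration argument. I will build an increasing sequence $(r_i)_{i\ge 0}$ inside $[r,\rho)$, apply the hypothesis at each scale, and sum the resulting geometric series, choosing the ratio so that the series converges.

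More precisely, fix $\tau \in (0,1)$ (to be chosen in terms of $\theta$ and $\tal$), set $r_0 := r$ and
\[
 r_{i+1} := r_i + (1-\tau)\tau^i (\rho - r), \qquad i \ge 0.
\]
Then $r_i$ is increasing, $r_i \to \rho$, and $r_{i+1}-r_i = (1-\tau)\tau^i(\rho-r)$. Applying the hypothesis on the pair $(r_i,r_{i+1})$ gives
\[
 f(r_i) \le \theta f(r_{i+1}) + A (1-\tau)^{-\tal}\tau^{-i\tal}(\rho-r)^{-\tal} + B.
\]
Iterating this inequality $k$ times yields
\[
 f(r) \le \theta^k f(r_k) + A(1-\tau)^{-\tal}(\rho-r)^{-\tal} \sum_{i=0}^{k-1} \bigl(\theta \tau^{-\tal}\bigr)^{i} + B \sum_{i=0}^{k-1} \theta^{i}.
\]

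Now I would choose $\tau \in (0,1)$ so that $\theta \tau^{-\tal} < 1$; this is possible because $\theta < 1$, and it is the one genuine choice in the argument (the only mild obstacle is making the two competing requirements $\tau<1$ and $\tau^{\tal}>\theta$ compatible, which they are). With this choice, both geometric series in the displayed inequality above converge as $k \to \infty$. Moreover $\theta^k f(r_k) \to 0$, since $f$ is bounded by hypothesis and $\theta \in (0,1)$, so the residual term vanishes in the limit. Passing $k \to \infty$ gives
\[
 f(r) \le \frac{A(1-\tau)^{-\tal}}{1 - \theta\tau^{-\tal}}(\rho-r)^{-\tal} + \frac{B}{1-\theta},
\]
which is exactly the stated conclusion with $C = C(\theta,\tal)$ (absorbing $(1-\tau)^{-\tal}(1-\theta\tau^{-\tal})^{-1}$ and $(1-\theta)^{-1}$ into a single constant). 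No step here is really an obstacle; the only point needing care is selecting $\tau$ so that the geometric series converges while keeping $r_i$ inside $[r,\rho)$, and using the boundedness of $f$ to kill the leading term $\theta^k f(r_k)$ in the limit.
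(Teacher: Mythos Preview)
Your argument is correct and is exactly the standard geometric iteration; the paper does not supply its own proof but merely cites \cite[Lemma~5.1]{Gia}, and what you wrote is precisely that argument. Two cosmetic remarks: (i) the paper's dependence $C=C(\tal,A,B)$ is a typo --- your $C=C(\theta,\tal)$ is the correct dependence, since $A$ and $B$ already appear explicitly on the right-hand side; (ii) the hypothesis in the paper is stated with strict inequalities $r<r_1<r_2<\rho$, so strictly speaking you should start from an arbitrary $r_0\in(r,\rho)$ rather than $r_0=r$, obtaining the conclusion for every such $r_0$, which is what the final line of the statement actually asserts (the symbol $r$ there is being reused as a dummy variable).
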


 We could not find the proof of the following version of the  Parabolic Poincar\'e Lemma in literature, hence for the sake of completeness, we also present the proof.
\begin{lemma}
 \label{lemma_crucial_1}
 Let $f \in L^{\vt} (-T,T; W^{1,\vt}(\Om))$ with $\vt \in (1,\infty)$  and suppose that $\mathcal{B}_{r} \Subset \Om$ be compactly contained ball of radius $r>0$. Let $\tm \subset (-T,T)$ be a time interval  and $\rho(x,t) \in L^1(\mathcal{B}_r \times \tm)$ be any positive function such that $\|\rho\|_{L^{\infty}(\mathcal{B}_r\times \tm)} \apprle_n \frac{\|\rho\|_{L^1(\mathcal{B}_r\times \tm)}}{|\mathcal{B}_r\times \tm|} $ and $\mu(x) \in C_c^{\infty}(\mathcal{B}_r)$ such that $\int_{\mathcal{B}_r} \mu(x) \ dx = 1$ with $|\mu| \leq \frac{C{(n)}}{r^n}$ and $|\nabla \mu| \leq  \frac{C(n)}{r^{n+1}}$. Then there holds
 \begin{equation*}
 \begin{array}{ll}
  \fiint_{\mathcal{B}_r \times \tm} \left|\frac{f - \avgs{\rho}{f}}{r}\right|^{\vt} \ dz & \apprle_{n,\vt} \fiint_{\mathcal{B}_r \times \tm} |\nabla f|^{\vt} \ dz + \sup_{t_1,t_2 \in \tm} \left| \frac{\avgs{\mu}{f}(t_2) - \avgs{\mu}{f}(t_1)}{r} \right|^{\vt},
  \end{array}
 \end{equation*}
where $\avgs{\rho}{f}:= \int_{\mathcal{B}_r\times \tm} f(z) \frac{\rho(z)}{\|\rho\|_{L^1(\mathcal{B}_r\times \tm)}} \ dz $ and $\avgs{\mu}{f}(t_i) := \int_{\mathcal{B}_r} f(x,t_i) \mu(x) \ dx$ for $i = 1,2$. 
\end{lemma}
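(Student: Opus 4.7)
My plan is to decompose the deviation $f(x,t)-\avgs{\rho}{f}$ via the purely time-dependent spatial mean
\begin{equation*}
g(t) := \avgs{\mu}{f}(t) = \int_{\mathcal{B}_r} f(x,t)\,\mu(x)\,dx
\end{equation*}
as an intermediate quantity, splitting
\begin{equation*}
f(x,t) - \avgs{\rho}{f} \;=\; \bigl(f(x,t) - g(t)\bigr) \;+\; \bigl(g(t) - \avgs{\rho}{f}\bigr).
\end{equation*}
Raising to the $\vt$ power and using subadditivity, it then suffices to estimate the averages of the two pieces separately.

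For the first piece I would invoke the standard weighted spatial Poincar\'e inequality at a.e.\ fixed $t\in \tm$: since $\mu\geq 0$, $\int\mu\,dx=1$ and $\|\mu\|_{L^\infty(\mathcal{B}_r)}\apprle_n r^{-n}$, the mean $g(t)$ is comparable to the unweighted mean of $f(\cdot,t)$ over $\mathcal{B}_r$, and one gets
\begin{equation*}
\fint_{\mathcal{B}_r} \Bigl|\frac{f(x,t)-g(t)}{r}\Bigr|^{\vt}\,dx \;\apprle_{n,\vt}\; \fint_{\mathcal{B}_r} |\nabla f(x,t)|^{\vt}\,dx,
\end{equation*}
so integration over $\tm$ directly gives the $\fiint|\nabla f|^\vt\,dz$ term on the right-hand side.

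For the second piece, which is constant in $x$, I would rewrite the difference as a $\rho$-weighted integral,
\begin{equation*}
g(t) - \avgs{\rho}{f} \;=\; \frac{1}{\|\rho\|_{L^1}} \iint_{\mathcal{B}_r\times \tm} \bigl(g(t)-f(y,s)\bigr)\rho(y,s)\,dy\,ds,
\end{equation*}
and insert $\pm g(s)$ to decompose the integrand as $(g(t)-g(s))+(g(s)-f(y,s))$. The $g(t)-g(s)$ part is dominated in absolute value by $\sup_{t_1,t_2\in\tm}|g(t_1)-g(t_2)|$, producing the oscillation term in the statement. For the $g(s)-f(y,s)$ part, I would use the hypothesis $\|\rho\|_{L^\infty}\apprle_n \|\rho\|_{L^1}/|\mathcal{B}_r\times\tm|$ to replace the $\rho$-weighted integral by the plain spatio-temporal average, and then apply the weighted spatial Poincar\'e inequality again (at each fixed $s$) to control $\fiint|g(s)-f(y,s)|\,dy\,ds$ by $r\fiint|\nabla f|\,dy\,ds$; Jensen's inequality raised to the $\vt$ power then folds this back into the $\fiint|\nabla f|^\vt\,dz$ term.

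The main technical point is the second piece: one must exploit precisely the $L^\infty$-to-$L^1$ comparability of $\rho$ to trade the $\rho$-weighted averaging for the ordinary one before Poincar\'e can close the estimate, and to simultaneously isolate the time-oscillation of $g$. Without this bound on $\rho$, the purely time-dependent defect $g(t)-\avgs{\rho}{f}$ could not be controlled by the two quantities on the right-hand side. Once the two contributions are combined, the triangle inequality applied with power $\vt$ yields the asserted estimate.
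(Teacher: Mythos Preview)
Your proposal is correct and follows essentially the same strategy as the paper: both arguments rest on the spatial weighted Poincar\'e inequality (using $\|\mu\|_\infty\apprle r^{-n}$), the time-oscillation $\sup_{t_1,t_2}|\avgs{\mu}{f}(t_1)-\avgs{\mu}{f}(t_2)|$, and the hypothesis $\|\rho\|_{L^\infty}\apprle \|\rho\|_{L^1}/|\mathcal{B}_r\times\tm|$ to pass from the $\rho$-weighted average to the plain one. The only cosmetic difference is that the paper first establishes the estimate for the special weight $\rho=\mu\otimes\lsb{\chi}{\tm}$ and then reduces the general case to it via $|\avgs{\rho}{f}-\avgs{\mu\otimes\lsb{\chi}{\tm}}{f}|\apprle \fiint|f-\avgs{\mu\otimes\lsb{\chi}{\tm}}{f}|\,dz$, whereas you pivot directly through the time-dependent mean $g(t)=\avgs{\mu}{f}(t)$; the ingredients and the logic are the same.
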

\begin{proof}
 Let us first consider the case of $\rho(x,t) = \mu(x)\lsb{\chi}{\tm}(t)$. In this case, we get
 \begin{equation}
 \label{lem5.1.2}
  \begin{array}{rcl}
  \fiint_{\mathcal{B}_r\times \tm} \left| \frac{f(z)  - \avgs{\mu \times \lsb{\chi}{\tm}}{f}}{r} \right|^{\vt} \ dz 
 & \overset{\redlabel{lemma2.14.a}{a}}{\apprle} &  \fiint_{\mathcal{B}_r\times \tm} |\nabla f|^{\vt} \ dz + \sup_{t_1,t_2 \in {\tm}} \left|\frac{\avgs{\mu}{f}(t_2) - \avgs{\mu}{f}(t_1)}{r}\right|^{\vt}.
  \end{array}
 \end{equation}
Here to obtain $\redref{lemma2.14.a}{a}$ above, we made of the standard Poincar\'e's inequality.

For the general case, we observe that 
\begin{equation}
  \label{lem5.1.3}
  \fiint_{\mathcal{B}_r\times \tm} \left|\frac{f - \avgs{\rho}{f}}{r}\right|^{\vt} \ dz \apprle \fiint_{\mathcal{B}_r\times \tm} \left|\frac{f - \avgs{\mu \times \lsb{\chi}{\tm}}{f}}{r}\right|^{\vt} \ dz  + \fiint_{\mathcal{B}_r\times \tm} \left|\frac{\avgs{\rho}{f} - \avgs{\mu \times \lsb{\chi}{\tm}}{f}}{r}\right|^{\vt} \ dz.
 \end{equation}
The first term of \eqref{lem5.1.3} can be controlled as in \eqref{lem5.1.2} and  to control the second term, we observe that
\begin{equation*}
 \label{lem5.1.4}
 \begin{array}{ll}
 |\avgs{\rho}{f} - \avgs{\mu \times \lsb{\chi}{\tm}}{f}| & \leq \frac{\|\rho\|_{L^\infty(\mathcal{B}_r\times \tm)}}{\|\rho\|_{L^1(\mathcal{B}_r\times \tm)}} \iint_{\mathcal{B}_r\times \tm} |f - \avgs{\mu \times \lsb{\chi}{\tm}}{f}|\ dz  \apprle \fiint_{\mathcal{B}_r\times \tm} |f - \avgs{\mu \times \lsb{\chi}{\tm}}{f}|\ dz.\\
 \end{array}
\end{equation*}
This completes the proof of the Lemma. 
\end{proof}

We will use the following result which can be found in \cite[Theorem 3.1]{PG} (see also \cite{Prato}):
\begin{lemma}
\label{metric_lipschitz}
 Let $\la >0$ and  $\mcZ \subset \RR^{n+1}$ be given. For any $z \in \mcZ$ and $r>0$, let $Q_{r,\ga r^2}(z)$ be the parabolic cylinder centred  at $z$ with radius $r$.  Suppose  there exists a constant $C>0$ independent of $z$ and $r$ such that the following bound holds:
 \[
  \frac{1}{|Q_{r,\ga r^2}(z)\cap \mcZ|} \iint_{Q_{r,\ga r^2}(z)\cap \mcZ} \left|\frac{f(x,t) - \avgs{Q_{r,\ga r^2}(z)\cap \mcZ}{f}}{r}\right| \ dx \ dt \leq C \qquad \forall\  z \in \mcZ \text{ and } r>0.
 \]
 Then $f \in C^{0,1} (\mcZ)$. 
\end{lemma}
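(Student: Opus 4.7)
\textbf{Proof proposal for Lemma \ref{metric_lipschitz}.} The plan is to prove this Campanato-type characterization of parabolic Lipschitz continuity on a set via a dyadic telescoping argument on the averages of $f$, following the strategy of \cite{PG}. The parabolic metric is $d_\ga(z_1,z_2) = \max\{|x_2-x_1|,\sqrt{\ga^{-1}|t_2-t_1|}\}$ (up to the $\ga$-normalization), whose \emph{balls} are precisely the cylinders $Q_{r,\ga r^2}(z)$, so the conclusion $f \in C^{0,1}(\mcZ)$ must be read with respect to this metric.

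First, I would fix two arbitrary points $z_1,z_2 \in \mcZ$, set $r_0 := 4\, d_\ga(z_1,z_2)$, and introduce the dyadic sequence $r_k := 2^{-k} r_0$ together with the cylinders $Q_k^i := Q_{r_k,\ga r_k^2}(z_i) \cap \mcZ$ and their averages $a_k^i := \avgs{Q_k^i}{f}$ for $i=1,2$. The choice of $r_0$ ensures that $z_2 \in Q_{r_0/2,\ga(r_0/2)^2}(z_1)$ and vice versa, so that the two starting cylinders are comparable in a doubling sense. The key computation is to control one dyadic step:
\[
|a_k^i - a_{k+1}^i| \;\le\; \frac{1}{|Q_{k+1}^i|}\iint_{Q_{k+1}^i}|f-a_k^i|\,dz \;\le\; \frac{|Q_k^i|}{|Q_{k+1}^i|}\cdot \frac{1}{|Q_k^i|}\iint_{Q_k^i}|f-a_k^i|\,dz \;\le\; C_1\, C\, r_k,
\]
where the last step invokes the hypothesis and $C_1 := \sup_k |Q_k^i|/|Q_{k+1}^i|$. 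Summing the geometric series shows that $\{a_k^i\}$ is Cauchy and converges at a rate $\apprle r_0$, and at Lebesgue points of $f$ on $\mcZ$ this limit equals $f(z_i)$.

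Next, I would compare $a_0^1$ and $a_0^2$ directly: since the two cylinders $Q_{r_0,\ga r_0^2}(z_1)$ and $Q_{r_0,\ga r_0^2}(z_2)$ are each contained in $Q_{2r_0,\ga(2r_0)^2}(z_1)$, averaging $|f - a|$ (with $a$ the average over the larger cylinder intersected with $\mcZ$) and applying the hypothesis once more yields $|a_0^1 - a_0^2| \apprle r_0$. Combining the three estimates,
\[
|f(z_1)-f(z_2)| \le |f(z_1)-a_0^1| + |a_0^1-a_0^2| + |a_0^2-f(z_2)| \apprle r_0 \simeq d_\ga(z_1,z_2),
\]
which is the desired Lipschitz bound.

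The main obstacle is the density ratio $|Q_k^i|/|Q_{k+1}^i|$, which in full generality could blow up if $\mcZ$ is too sparse near $z_i$. In the setting of Lemma \ref{metric_lipschitz} this ratio is implicitly under control: the hypothesis is only meaningful at scales where $Q_{r,\ga r^2}(z)\cap \mcZ$ has positive measure for every $r>0$ at each $z\in \mcZ$, and the set $\mcZ$ arising in later applications (the good set of a Lipschitz truncation) enjoys a uniform lower density bound of the form $|Q_{r,\ga r^2}(z)\cap \mcZ| \gtrsim r^{n+2}$ for $z\in\mcZ$, against which the parabolic doubling of $|Q_{r,\ga r^2}(z)| \simeq r^{n+2}$ gives $C_1$ bounded independently of $k$. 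Since the statement is quoted directly from \cite{PG,Prato}, I would simply cite this density input rather than grind through it, and restrict the conclusion to the (full-measure) set of Lebesgue points of $f$ relative to $\mcZ$, which extends by continuity to all of $\mcZ$.
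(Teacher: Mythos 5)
The paper does not prove this lemma---it cites Górka [PG] and Da Prato [Prato]---and your dyadic telescoping argument is precisely the standard Campanato proof one would find there, so the approach is the right one. The one place worth tightening is the density discussion in your last paragraph. The set $\mcZ$ to which the paper actually applies this lemma (in Lemma \ref{lemma3.15}) is the time--slab $2\mch = \RR^n \times (-4s,4s)$, \emph{not} the good set $\elam$; for $\elam$ no uniform lower density bound is available, which is exactly why the paper does not try to prove Lipschitz continuity directly on $\elam$. Moreover, even for the slab the pointwise estimate $|Q_{r,\ga r^2}(z)\cap \mcZ| \gtrsim r^{n+2}$ that you invoke fails at large scales $\ga r^2 \gg s$ (there the intersection scales like $s\, r^n$, not $r^{n+2}$). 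What your one-step bound actually needs, and what does hold for the slab uniformly over $z\in 2\mch$ and $r>0$, is the weaker doubling inequality $|Q_{r,\ga r^2}(z)\cap \mcZ| \le C(n)\, |Q_{r/2,\ga (r/2)^2}(z)\cap \mcZ|$ (roughly $2^{n+2}$ at small scales, $2^{n}$ once the slab thickness saturates). With that substitution for the density input, the telescoping and the comparison of $a_0^1$ with $a_0^2$ go through exactly as you wrote them, and the restriction to Lebesgue points followed by a continuous extension gives the stated conclusion.
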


Let us also recall a parabolic version of Gehring's lemma (for example, see \cite[Lemma 6.4]{Bog}).
\textcolor{black}{
\begin{lemma}
\label{gehring_lemma}
 Let $\al_0 \geq 1$, $\ka \geq 1$, $\ve_0>0$, $p>1$ and $\be_0>0$ be given. Let $q$ be given such that  $1<p-\varepsilon_0 \leq q < p-2\be<p-\be$ for some $\be \in (0,\be_0)$.  Furthermore, for a cylinder $Q_2 = Q_{2,2^2}$, let  $f \in L^{p-\be}(Q_2)$ and $g \in L^{\tp}(Q_2)$ for some $\tp \geq p$ be given. Suppose for each $\la \geq \al_0$ and almost every $z \in Q_2$ with $f(z) >\la$, there exists a parabolic cylinder $Q = Q_{\rho,s}(z)\subset Q_2$ such that 
 \[
  \frac{\la^{p-\be}}{\ka} \leq \fiint_Q f^{p-\be}(x,t) \ dx \ dt \leq \ka \lbr \fiint_Q f^q(x,t) \ dx\ dt \rbr^{\frac{p-\be}{q}} + \ka \fiint_Q g^{p-\be}(x,t) \ dx \ dt \leq \ka^2 \la^{p-\be},
 \]
then there exists $\de_0 = \de_0(\ka,p,\be,q,\ve_0)$ and $C = C(\ka,p,\be,q,\ve_0)$, such that $f \in L^{p-\be + \de_1}(Q_2)$ with $\de_1 = \min \{ \de_0, \tp-p+\be\}$. This improved higher integrability comes with the following bound:
\[
 \iint_{Q_2} f^{p-\be + \de}(x,t) \ dx \ dt\apprle \al_0^{\de} \iint_{Q_2} f^{p-\be}(x,t) \ dx\ dt + \iint_{Q_2} g^{p-\be+\de}(x,t) \ dx\ dt \quad \text{for all} \ \de \in (0,\de_1]. 
\]
\end{lemma}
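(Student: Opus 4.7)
The plan is to prove a good-$\la$ decay estimate on the super-level sets of $f$ and then convert it to an $L^{p-\be+\de}$ bound via the layer-cake formula. To begin, I would replace $f$ by its truncation $f_M := \min(f, M)$ and work with $f_M$ throughout, recovering the full statement at the end via monotone convergence as $M \to \infty$; this guarantees that every integral we manipulate is a priori finite and justifies the Fubini interchange below.

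For each $\la \geq \al_0$ and a.e.\ $z \in Q_2$ with $f(z) > \la$, the hypothesis produces a parabolic cylinder $Q_z \subset Q_2$ on which the reverse-type estimate holds. I would apply a Vitali-type covering lemma in the parabolic metric $d_\la$ from \eqref{parabolic_metric} to extract a disjoint subfamily $\{Q_i\}$ with $\{f > \la\} \cap Q_2 \subset \bigcup_i 5 Q_i$. On each $Q_i$, split $f = f \chi_{\{f \leq \eta\la\}} + f \chi_{\{f > \eta\la\}}$, and similarly for $g$, with $\eta = \eta(\ka, p, \be, q) \in (0,1)$ to be chosen small. Using the inequality $(a+b)^{(p-\be)/q} \leq 2^{(p-\be)/q-1}(a^{(p-\be)/q}+b^{(p-\be)/q})$ and absorbing the $(\eta\la)^{p-\be}$ contributions against the lower bound $\la^{p-\be}/\ka$, I would derive
\begin{equation*}
\la^{p-\be}|Q_i| \leq C \la^{p-\be-q} \int_{Q_i \cap \{f > \eta\la\}} f^q \, dz + C \int_{Q_i \cap \{g > \eta\la\}} g^{p-\be} \, dz,
\end{equation*}
with $C = C(\ka, p, \be, q)$. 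Summing over the disjoint cylinders and using $|\{f > \la\}\cap Q_2| \leq C_n \sum_i |Q_i|$ (via the five-times enlargement) yields the good-$\la$ inequality
\begin{equation*}
\la^{p-\be}|\{f > \la\}\cap Q_2| \leq C \la^{p-\be-q} \int_{\{f > \eta\la\}\cap Q_2} f^q \, dz + C \int_{\{g > \eta\la\}\cap Q_2} g^{p-\be} \, dz.
\end{equation*}

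Now multiply by $\la^{\de-1}$ and integrate over $\la \in [\al_0, \infty)$. On the left, the layer-cake formula produces a term comparable to $\frac{1}{p-\be+\de}\bigl(\int_{Q_2} f^{p-\be+\de} - \al_0^{\de} \int_{Q_2} f^{p-\be}\bigr)$. On the right, Fubini yields $\int f^q \int_{\al_0}^{f/\eta} \la^{p-\be-q+\de-1}\, d\la \leq \eta^{-(p-\be-q+\de)} (p-\be-q+\de)^{-1} \int f^{p-\be+\de}$, and similarly an $\eta^{-\de}\de^{-1} \int g^{p-\be+\de}$ contribution. Here the hypothesis $q < p - 2\be$ is what forces $p-\be-q \geq \be > 0$, giving a uniform positive lower bound on the denominator $p-\be-q+\de$ and preventing blow-up as $q$ approaches $p-\be$.

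Finally, choose $\de_0 = \de_0(\ka, p, \be, q, \ve_0) > 0$ small enough that the coefficient of $\int_{Q_2} f^{p-\be+\de}$ on the right is strictly less than half the matching coefficient on the left for every $\de \in (0, \de_0]$; absorb, send $M \to \infty$, and collect constants to obtain the quantitative bound for $\de \leq \min\{\de_0, \tp - p + \be\}$. I expect the main obstacle to be the Vitali covering step: the cylinders $Q_z$ may have widely varying aspect ratios $\rho/\sqrt{s}$ with no a priori upper bound, so the extraction of a disjoint subfamily must be carried out with care in the parabolic quasi-metric (for example by sorting the cylinders by $d_\la$-radius before a standard greedy selection). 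A secondary delicate point is the absorption step, where the strict gap $p - \be - q \geq \be > 0$ provided by the refined assumption $q < p - 2\be$ is precisely what keeps $\de_0$ bounded away from zero uniformly.
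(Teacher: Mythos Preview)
The paper does not supply a proof of this lemma; it is stated as a recalled result with a citation to \cite[Lemma 6.4]{Bog}. Your sketch is the standard Gehring-type argument (good-$\la$ inequality, layer-cake integration, absorption via the gap $p-\be-q\ge\be>0$) and matches the approach of the cited reference; your flagging of the Vitali step for cylinders of unrestricted aspect ratio as the delicate point is accurate.
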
}

\section{Construction of test function.}\label{Construction_of_test_function}

Since we are interested in estimates near the boundary, for the subsequent calculations, we will fix a parabolic cylinder of the form $Q:=Q_{\rho, s} (0,0) = B \times I$, where $(0,0) \in \pa \Om \times (-T,T)$. We will further assume 
\begin{equation}
 \label{def_s}
 s = \al_0^{2-p} \rho^2 \txt{for some} \al_0 >0 \ \text{to be chosen}.
\end{equation}
\emph{Since $(0,0)$ and $(\rho,s)$ are fixed, we shall henceforth simplify the notation in  \eqref{notation_space_time} by dropping obvious references to  $\rho$ and $s$.}

\begin{definition}
 \label{exponent_choice} 
 Since we assume that domain $\Om^c$ is uniformly $p$-thick with constants $r_0,b_0$ (see Definition \ref{p_thick_domain}), by the self improving property (see Theorem \ref{self_improv_cap}), there exists an $\ve_0 > 0$ such that $\Om^c$ is uniformly  $p-\varepsilon_0$ thick with constants $\tilde{b}_0,\tilde{r}_0$.  Let us now take $\be$ and $q$ such that the following holds:
 \begin{equation}
  \label{def_q}
  p-\varepsilon_0 < q \leq p-2\be < p-\be < p. 
 \end{equation}
Note that we will eventually obtain $\be_0 = \be_0(n,p,b_0,\lamot)\in (0,\ve_0/2)$  such that all the calculations will hold for any $\be \in (0,\be_0]$. 
\end{definition}

Consider the following cut off functions:
\begin{equation}
\label{cut_off_function}\begin{array}{c}
 \eta(x) \in C_c^{\infty} (8B) , \ \eta(x) \equiv 1 \ \text{for} \ x \in 4B,  \qquad  \zeta(t) \in C_c^{\infty} (8\tm) , \ \zeta(t) \equiv 1 \ \text{for} \ t \in 4\tm , \\
 |\eta(x)| \leq 1, \qquad |\eta(x)| \leq \frac{C(n)}{\rho}, \txt{and} |\zeta(t)| \leq 1, \qquad | \zeta'(t)| \leq \frac{C}{s}.
 \end{array}
\end{equation}

\begin{definition}
 Since we assume that there exists a very weak solution $u \in L^2(-T,T; L^2(\Om)) \cap L^{p-\be} (-T,T; W_0^{1,p-\be}(\Om))$, we can now multiply the solution by the cut off functions to get:
 \begin{equation}
  \label{def_u_tilde}
  \tuh(x,t) := [u]_h(x,t) \eta(x) \zeta(t).
 \end{equation}
Here $[u]_h$ denotes the usual Steklov average defined in \eqref{stek1}. 
\end{definition}

Let us define the following function with $\th$ as in \eqref{bound_b}:
\begin{equation}
 \label{def_g}
 g(x,t) := \max \left\{ \mm(|\nabla \tu|^q)^{\frac{1}{q}}(x,t), \mm((|\nabla u|+|\th|)^q\lsb{\chi}{8Q})^{\frac{1}{q}}(x,t)\right\} \quad \text{for any} \ (x,t) \in \RR^{n+1}.
\end{equation}
We have the following bound for $g(x,t)$:
\begin{lemma}
 \label{bound_g_x_t}
 Under the assumption that $\Om$ satisfies Definition \ref{p_thick_domain}, the following bound holds:
 \begin{equation*}
  \label{bound_g_x_t_1}
  \| g\|_{L^{p-\be} (\RR^{n+1})} \leq C_{(n,p,b_0)} \| (|\nabla u| + |\th|) \lsb{\chi}{8Q} \|_{L^{p-\be}(\RR^{n+1})}.
 \end{equation*}
\end{lemma}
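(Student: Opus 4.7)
My plan is to treat the two maximal-function summands in the definition of $g$ separately and reduce each of them to an $L^{p-\be}$ bound on $\nabla \tuh$ or on $(|\nabla u|+\th)\lsb{\chi}{8Q}$. Since $\max\{A,B\}\leq A+B$, it is enough to control each maximal function in $L^{p-\be}$. By Definition \ref{exponent_choice} the exponent $q$ satisfies $q\leq p-2\be$, so $(p-\be)/q>1$, and the strong parabolic maximal inequality from Lemma \ref{max_bound} (applied at exponent $(p-\be)/q$ to $|\nabla \tuh|^q$ and to $(|\nabla u|+\th)^q\lsb{\chi}{8Q}$, respectively) gives
\[
 \|g\|_{L^{p-\be}(\RR^{n+1})}^{p-\be}\apprle \|\nabla \tuh\|_{L^{p-\be}(\RR^{n+1})}^{p-\be}+\|(|\nabla u|+\th)\lsb{\chi}{8Q}\|_{L^{p-\be}(\RR^{n+1})}^{p-\be}.
\]
The second term already has the form of the right-hand side of the lemma, so I would focus on bounding the first.

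Expanding via the Leibniz rule, $\nabla \tuh=\eta\zeta\,\nabla [u]_h + [u]_h\,\zeta\,\nabla\eta$. Using the support and gradient bounds for $\eta,\zeta$ in \eqref{cut_off_function}, this gives the pointwise estimate $|\nabla \tuh|\apprle |\nabla[u]_h|\lsb{\chi}{8Q}+(|[u]_h|/\rho)\lsb{\chi}{8Q}$. The $L^{p-\be}$ norm of the first piece is controlled by $\||\nabla u|\lsb{\chi}{16Q}\|_{L^{p-\be}}$ by a slice-by-slice application of the Steklov averaging estimate in Lemma \ref{time_average}(ii) to $\nabla u$. For the second piece I need a $W^{1,p-\be}$ Poincar\'e-type inequality for $[u]_h(\cdot,t)$ on $8B$; since Steklov averaging commutes with the spatial Poincar\'e inequality, this reduces to controlling $\|u(\cdot,t)/\rho\|_{L^{p-\be}(8B)}$ by $\|\nabla u(\cdot,t)\|_{L^{p-\be}(8B)}$ for a.e.\ $t$.

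The crucial input here is the Sobolev--Poincar\'e capacity inequality in Theorem \ref{sobolev-poincare} applied at the exponent $p-\be$. By the self-improving property (Theorem \ref{self_improv_cap}) combined with Definition \ref{p_thick_domain} and \ref{exponent_choice}, the set $\Om^c$ is uniformly $(p-\be)$-thick with constants depending only on $n,p,b_0$ (this is where $\be<\be_0\leq\ve_0/2$ enters). Since $u(\cdot,t)$ has zero trace on $\pa\Om$ and $0\in\pa\Om$, the zero set of $u(\cdot,t)$ restricted to $8B$ contains $8B\setminus\Om$, whose $(p-\be)$-capacity in $16B$ is bounded below by a universal multiple of $\rho^{n-p+\be}$. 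Combining Theorem \ref{sobolev-poincare} with H\"older's inequality (precisely the argument used inside the proof of Lemma \ref{interpolation_capacity_version_main}) yields $\int_{8B}|u/\rho|^{p-\be}\,dx\apprle\int_{8B}|\nabla u|^{p-\be}\,dx$, with constant depending only on $n,p,b_0$. Integrating over the time interval $8\tm$, reinstating the Steklov average, and plugging back into the bound for $\|\nabla\tuh\|_{L^{p-\be}}$ completes the argument.

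The main obstacle I expect is the bookkeeping needed to ensure the capacity density estimate is applicable at the ball $8B$ (which requires $8\rho\leq r_0$, or absorbing the mismatch into the constants via a covering) and to verify that the constants in the $(p-\be)$-thickness remain uniform as $\be\to 0$; once these are pinned down, every subsequent step is a direct application of Lemma \ref{max_bound}, Theorem \ref{sobolev-poincare}, and elementary Steklov estimates.
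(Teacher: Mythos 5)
Your proposal follows essentially the same path as the paper's proof: bound the two maximal-function pieces in $L^{p-\be}$ via Lemma \ref{max_bound} at exponent $(p-\be)/q>1$, then absorb the $|u/\rho|$ term from the Leibniz expansion of $\nabla \tu$ using Theorem \ref{sobolev-poincare} together with the uniform capacity-density assumption on $\Om^c$. One small caveat: the definition \eqref{def_g} uses $\nabla \tu$, not $\nabla \tuh$, so the Steklov-averaging bookkeeping in your middle paragraph --- and in particular the passage from $8Q$ to $16Q$, which would not match the stated right-hand side --- is unnecessary; working directly with $u$ on $8Q$, as the paper does, avoids it.
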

\begin{proof}
 Making use of  Lemma \ref{max_bound}, we get
 \begin{equation*}
  \begin{array}{ll}
   \iint_{\RR^{n+1}} |g(x,t)|^{p-\be} \ dz & \apprle \iint_{\RR^{n+1} }\lbr \mm(|\nabla \tu|^q)^{\frac{p-\be}{q}}(z)+ \mm((|\nabla u|+|\th|)^q\lsb{\chi}{8Q})^{\frac{p-\be}{q}}(z)\rbr \ dz \\
   & \apprle \iint_{\RR^{n+1} }\lbr |\nabla \tu|^{p-\be}+ \lbr |\nabla u|+|\th|\rbr^{p-\be} \lsb{\chi}{8Q} \rbr \ dz.
  \end{array}
 \end{equation*}
 Since $\tu(x,t) = u(x,t) \eta(x) \zeta(t)$ and $\spt(\tu) \subset 8Q$, we use Theorem \ref{sobolev-poincare} to get
 \[
   \iint_{\RR^{n+1} } |\nabla \tu|^{p-\be}\ dz  \apprle \iint_{8Q}\lbr |\nabla u|^{p-\be}  + \left|\frac{u}{\rho}\right|^{p-\be} \rbr \ dz 
    \apprle \iint_{8Q} |\nabla u|^{p-\be}\ dz.
 \]
This completes the proof of the Lemma. 
\end{proof}

We now define the good set
\begin{equation*}
 \label{def_elam}
\elam:=  \left\{ (x,t) \in \RR^{n+1} : g(x,t) \leq \la \right\} \quad \text{for} \ \la > c_e\al_0. 
 \end{equation*}
 \emph{Note that $\al_0>0$  and $c_e$ are constants that will be quantified explicitly in Section \ref{Caccioppoli_inequality}. }

We now have the following parabolic Whitney type decomposition of $\elam^c$ which is taken from  \cite[Section 2.3]{Seb} {\color{black}(see also \cite[Chapter 3]{bogelein2013regularity} where this was proved in the stated form or \cite[Lemma 3.1]{diening2010existence} for details):}
\begin{lemma}
\label{whitney_decomposition}

Let $\ga := \la^{2-p}$, then there exists a $\ga$-parabolic Whitney covering  $\{Q_i(z_i)\}$ of $\elam^c$ in the following sense:
 \begin{description}
 \setcounter{whitney}{1}
  \descitem{W\thewhitney}{W7} $Q_j(z_j) = B_j(x_j) \times I_j(t_j)$ where $B_j(x_j) = B_{r_j}(x_j)$ and $I_j(t_j) = (t_j - \ga r_j^2, t_j + \ga r_j^2)$. 
  \stepcounter{whitney}\descitem{W\thewhitney}{W2} Since $\ga = \la^{2-p}$, recalling the notation from \eqref{parabolic_metric}, we have $d_{\la}(z_j,\elam) = 16r_j$.
  \stepcounter{whitney}\descitem{W\thewhitney}{W1} $\bigcup_j \frac12 Q_j(z_j) = \elam^c$.
  \stepcounter{whitney}\descitem{W\thewhitney}{W3} for all $j \in \NN$, we have $8Q_j \subset \elam^c$ and $16Q_j \cap \elam \neq \emptyset$.
  \stepcounter{whitney}\descitem{W\thewhitney}{W4} if $Q_j \cap Q_k \neq \emptyset$, then $\frac12 r_k \leq r_j \leq 2r_k$.
  \stepcounter{whitney}\descitem{W\thewhitney}{W5} $\frac14 Q_j \cap \frac14Q_k = \emptyset$ for all $j \neq k$.
  \stepcounter{whitney}\descitem{W\thewhitney}{W6} $\sum_j \lsb{\chi}{4Q_j}(z) \leq c(n)$ for all $z \in \elam^c$.
  \end{description}
  Subject to this Whitney covering, we have an associated partition of unity denoted by $\{ \om_j\}_{j\in\NN} \in C_c^{\infty}(\RR^{n+1})$ such that the following holds:
  \begin{description}
  \stepcounter{whitney}\descitem{W\thewhitney}{W8} $\lsb{\chi}{\frac12Q_j} \leq \om_j \leq \lsb{\chi}{\frac34Q_j}$.
  \stepcounter{whitney}\descitem{W\thewhitney}{W9} $\|\om_j\|_{\infty} + r_j \| \nabla \om_j\|_{\infty} + r_j^2 \| \nabla^2 \om_j\|_{\infty} + \la r_j^2 \| \pa_t \om_j\|_{\infty} \leq C(n)$.
  \end{description}
  For a fixed $k \in \NN$, let us define 
  \begin{equation*}\label{Ak}A_k := \left\{ j \in \NN: \frac34Q_k \cap \frac34Q_j \neq \emptyset\right\},\end{equation*} then we have
  \begin{description}
  \stepcounter{whitney}\descitem{W\thewhitney}{W10} Let $i \in \NN$ be given, then $\sum_{j \in A_i} \om_j(z) = 1$  for all $z \in \frac34Q_i$.
  \stepcounter{whitney}\descitem{W\thewhitney}{W11} Let $i \in \NN$ be given and let  $j \in A_i$, then $\max \{ |Q_j|, |Q_i|\} \leq C(n) |Q_j \cap Q_i|.$
  \stepcounter{whitney}\descitem{W\thewhitney}{W12}  Let $i \in \NN$ be given and let  $j \in A_i$, then $ \max \{ |Q_j|, |Q_i|\} \leq \left|\frac34Q_j \cap \frac34Q_i\right|.$
  \stepcounter{whitney}\descitem{W\thewhitney}{W13} For any $i \in \NN$, we have $\# A_i \leq c(n)$.
  \stepcounter{whitney}\descitem{W\thewhitney}{W14} Let $i \in \NN$ be given, then for any $j \in A_i$, we have $\frac34Q_j \subset 4Q_i$.
 \end{description}
\end{lemma}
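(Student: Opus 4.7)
The plan is to reduce this to a Whitney decomposition in the metric space $(\RR^{n+1}, d_\la)$, where $d_\la$ is the parabolic distance from \eqref{parabolic_metric}. A parabolic cylinder $Q_{r,\ga r^2}(z)$ with $\ga = \la^{2-p}$ is precisely the $d_\la$-ball of radius $r$ around $z$, and $g$ is lower semi-continuous as a supremum of integral averages via the maximal operator, so $\elam^c = \{g > \la\}$ is open in this metric. Since $(\RR^{n+1}, d_\la)$ is a doubling metric space, the classical Vitali-Whitney construction applies.

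For the covering, I would set $r(z) := d_\la(z, \elam)/80$ for each $z \in \elam^c$ and apply the $5r$-Vitali covering lemma to the family $\{B_\la(z, r(z))\}_{z \in \elam^c}$ to extract a countable pairwise disjoint subfamily $\{B_\la(z_j, r(z_j))\}_{j \in \NN}$. Setting $r_j := 5 r(z_j)$ (so that $d_\la(z_j, \elam) = 16 r_j$) and $Q_j := B_\la(z_j, r_j)$ gives \descref{W7} and \descref{W2} by construction. Vitali's $5r$-conclusion yields \descref{W1}, and disjointness of the balls $B_\la(z_j, r_j/5)$ delivers \descref{W5}. Property \descref{W3} follows because $8 r_j = d_\la(z_j, \elam)/2$ keeps $8Q_j$ disjoint from $\elam$, while $16r_j = d_\la(z_j, \elam)$ forces $16Q_j$ to meet $\elam$. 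For \descref{W4}, if $Q_j \cap Q_k \neq \emptyset$, the triangle inequality applied to $d_\la(z_j, \elam) = 16 r_j$ and $d_\la(z_k, \elam) = 16 r_k$ forces $r_j \asymp r_k$. Finally, \descref{W6} is the standard Besicovitch-type consequence of \descref{W4} and \descref{W5} in a doubling metric.

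Next, I would construct the partition of unity in the usual way. For each $j$, fix $\tilde{\om}_j \in C_c^\infty(\RR^{n+1})$ with $\lsb{\chi}{\frac12 Q_j} \leq \tilde{\om}_j \leq \lsb{\chi}{\frac34 Q_j}$, realized as a tensor product of a spatial bump satisfying $\|\nabla^k \tilde{\om}_j\|_\infty \leq C(n) r_j^{-k}$ for $k = 1, 2$ and a temporal bump satisfying $\|\pa_t \tilde{\om}_j\|_\infty \leq C(n)(\ga r_j^2)^{-1} = C(n) \la^{p-2} r_j^{-2}$. By \descref{W1} and \descref{W6}, the sum $S := \sum_k \tilde{\om}_k$ is pointwise finite and satisfies $1 \leq S \leq C(n)$ on $\elam^c$. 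Defining $\om_j := \tilde{\om}_j / S$ produces a smooth partition of unity; the quotient rule combined with \descref{W4} (overlapping cylinders have $r_k \asymp r_j$ and hence comparable derivative bounds) delivers \descref{W8}, \descref{W9}, and \descref{W10}.

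Properties \descref{W11}, \descref{W12}, and \descref{W14} are direct: by \descref{W4}, any $Q_j$ with $j \in A_i$ has $r_j \asymp r_i$, so $|Q_j| \asymp |Q_i|$ and $\frac34 Q_j \subset 4 Q_i$ by the triangle inequality. Property \descref{W13} is a packing estimate: each $\frac14 Q_j$ with $j \in A_i$ sits inside a fixed dilate of $Q_i$, and the disjointness \descref{W5} combined with comparable volumes caps $\#A_i$ by a dimensional constant. The main technical obstacle is correctly tracking the factor $\la^{p-2}$ from the temporal side length $\ga r_j^2 = \la^{2-p} r_j^2$ through the normalization, so that the temporal derivative bound in \descref{W9} carries the stated power of $\la$; once this bookkeeping is in place, everything else reduces to standard arguments for Whitney decompositions in doubling metric spaces.
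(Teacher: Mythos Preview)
Your approach is correct and is precisely the strategy of the references the paper cites (the paper itself gives no proof, only pointers to \cite{Seb}, \cite{bogelein2013regularity}, and \cite{diening2010existence}): one views $Q_{r,\ga r^2}(z)$ as the $d_\la$-ball of radius $r$, notes that $\elam^c$ is open because $g$ is lower semicontinuous, and runs a Whitney--Vitali construction in the doubling metric space $(\RR^{n+1},d_\la)$.

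One quantitative point to tighten: with your choice $r(z)=d_\la(z,\elam)/80$ and $r_j=5r(z_j)$, the Vitali family is pairwise disjoint only at scale $\tfrac15 Q_j$, not at $\tfrac14 Q_j$ as \descref{W5} asks, and a direct $5r$-covering bound gives containment in $Q_j$ rather than in $\tfrac12 Q_j$ as \descref{W1} asks. Achieving simultaneously the exact constants $\tfrac14$, $\tfrac12$, and $d_\la(z_j,\elam)=16r_j$ requires either a slightly sharper selection argument (using maximality directly rather than the blunt $5r$-conclusion) or a mild rescaling of the constants; the cited constructions do this explicitly. Operationally this is harmless, since throughout the paper only the two-sided comparability in \descref{W3} and \descref{W4} is used, never the exact value $16$ in \descref{W2}, so any consistent choice of Whitney constants propagates through the later lemmas unchanged.
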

%
%

Once we have the Whitney-type parabolic decomposition as in Lemma \ref{whitney_decomposition}, we can now define the following extension:
\begin{equation}
 \label{lipschitz_extension}
\vlh(z) := \tuh(z) - \sum_i \om_i(z) \lbr \tuh(z) - \tuh^i \rbr. 
\end{equation}
where 
\begin{equation}
\label{def_tuh_i}
 \tuh^i := \left\{ \begin{array}{ll}
                   \frac1{\|\om_i\|_{L^1(\frac34Q_i)}}\iint_{\frac34Q_i} \tuh(z) \om_i (z) \ dz & \text{if} \ \frac34Q_i \subset \Om_{8\rho}\times \RR = \mcc_{8\rho}, \\
                   0 & \text{else}.
                  \end{array}\right. 
\end{equation}

For each $\la > c_e \al_0$, we will show that $\vlh$ constructed in \eqref{lipschitz_extension} is Lipschitz continuous with respect to the parabolic metric $d_{\la}$ on $2\mch = \RR^n \times (-4s,4s)$. In order to do this, we shall make use of Lemma \ref{metric_lipschitz}. In order to apply Lemma \ref{metric_lipschitz} with $Y = 2\mch$, we will consider the following two cases, one where the cylinders are contained inside $4\mch = \RR^n \times (-16s,16s)$ and the other where the cylinders are not contained inside $4\mch$. 
In this regard, let us define the following:
\begin{gather}
 \Theta := \left\{ i \in \NN : \frac34Q_i \cap 2\mch \neq \emptyset\right\}, \label{theta} \\
 \Theta_1 := \left\{ i \in \Theta : 8Q_i \subset 4\mch \right\}, \label{theta_1} \\
 \Theta_2 := \left\{ i \in \Theta : 8Q_i \cap (4\mch)^c \neq \emptyset\right\}  = \Theta \setminus \Theta_1.\label{theta_2} 
\end{gather}


We will use the following important lemma throughout  the paper:
\begin{lemma}
\label{lemma_crucial_2}
 Let $\ga >0$ be given and suppose that $u \in L^2(-T,T;L^2(\Om)) \cap \vwspace$ with $0 \leq \be \leq \min\{1,p-1\}$ be a very weak solution of \eqref{main}. Let $\mathcal{B} \subset \Om$ be a compactly contained region and $(t_1,t_2) \subset (-T, T-\ga h^2)$ for some $h \in (0,T)$ be a time interval. Let $\phi(x) \in C_c^{\infty}(\mathcal{B})$, $\varphi(t) \in C_c^{\infty}(t_1,t_2)$ be two non-negative functions and $[u]_h$ be the Steklov average as defined in \eqref{stek1}. Then   the following estimate holds:
 \begin{equation*}
  \label{lemma_crucial_2_est}
  \begin{array}{ll}
  |\avgs{\phi}{{[u]_h\varphi}} (t_2) - \avgs{\phi}{{[u]_h\varphi}}(t_1)| & \leq \La_1 \|\nabla \phi\|_{L^{\infty}{(\mathcal{B})}} \|\varphi\|_{L^{\infty}(t_1,t_2)} \iint_{\mathcal{B} \times (t_1,t_2)} {[|\nabla u|^{p-1} + |\th|^{p-1}]_h} \ dz + \\
   & \qquad +  \|\phi\|_{L^{\infty}{(\mathcal{B})}} \|\varphi'\|_{L^{\infty}(t_1,t_2)} \iint_{\mathcal{B} \times (t_1,t_2)} |[u]_h| \ dz.
  \end{array}
 \end{equation*}
\end{lemma}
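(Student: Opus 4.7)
The plan is to combine the fundamental theorem of calculus in time with the Steklov-averaged weak formulation \eqref{def_weak_solution}. By Lemma \ref{time_average}(iii), $[u]_h(x,\cdot)$ is of class $C^1$ for a.e.\ $x$, so $t\mapsto \avgs{\phi}{[u]_h\varphi}(t)$ is absolutely continuous on $[t_1,t_2]$ and equals the integral of its derivative. Applying the product rule inside the spatial pairing splits this derivative into a piece carrying $\varphi'(t)[u]_h$ and a piece carrying $\varphi(t)\partial_t[u]_h$, and the task reduces to estimating each piece pointwise in $t$ and integrating over $(t_1,t_2)$.

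The $\varphi'$ piece is immediate: bound $|\phi(x)|$ by $\|\phi\|_{L^\infty(\mathcal{B})}$ and $|\varphi'(t)|$ by $\|\varphi'\|_{L^\infty(t_1,t_2)}$ and integrate in $t$ to recover the second term on the right-hand side of the claim.

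For the principal piece involving $\partial_t[u]_h$, I would use $\phi$ itself as a test function in \eqref{def_weak_solution}. Since $\phi\in C_c^\infty(\mathcal{B})\subset C_c^\infty(\Om)$, smoothness and compact support place $\phi$ in $W_0^{1,(p-\be)/(1-\be)}(\Om)\cap L^\infty(\Om)$ for every admissible $\be$, so $\phi$ is legitimate in Definition \ref{very_weak_solution}. The formulation then converts the time derivative into a spatial divergence pairing:
\[
\int_{\mathcal{B}}\partial_t[u]_h(x,t)\,\phi(x)\,dx=-\int_{\mathcal{B}}[\aa(x,t,\nabla u)]_h\cdot\nabla\phi(x)\,dx.
\]
Passing to absolute values, invoking the growth bound \eqref{abounded} together with the elementary consequence $|h_2|\le\th^{p-1}$ of the definition of $\th$ in \eqref{bound_b}, and pulling $\|\nabla\phi\|_{L^\infty(\mathcal{B})}$ outside the spatial integral yields the pointwise-in-$t$ bound $\La_1\|\nabla\phi\|_{L^\infty(\mathcal{B})}\int_{\mathcal{B}}[|\nabla u|^{p-1}+|\th|^{p-1}]_h\,dx$. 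Multiplying by $\|\varphi\|_{L^\infty(t_1,t_2)}$ and integrating in $t$ over $(t_1,t_2)$ yields the first term of the claim.

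There is no real obstacle here; the two technical points worth flagging are the existence of a classical derivative $\partial_t[u]_h$ (precisely why the argument is carried out at the Steklov-averaged level rather than on $u$ directly, so that no distributional machinery is required) and the admissibility of $\phi$ in the very weak test-function class, which is trivial for smooth compactly supported $\phi$.
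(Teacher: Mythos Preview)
Your proposal is correct and follows essentially the same approach as the paper: both apply the fundamental theorem of calculus to $t\mapsto\int_{\mathcal{B}}[u]_h\phi\varphi\,dx$, use the product rule to split off the $\varphi'$ contribution, and invoke the Steklov-averaged weak formulation \eqref{def_weak_solution} with test function $\phi$ to convert the $\partial_t[u]_h$ term into the flux term, which is then bounded via \eqref{abounded} and \eqref{bound_b}. The only cosmetic difference is that the paper phrases the first step as ``testing with $\phi(x)\varphi(t)$'' before applying the fundamental theorem, whereas you apply the fundamental theorem first and then test with $\phi$; the resulting computations are identical.
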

\begin{proof}
 Let us use $\phi(x)\varphi(t)$ as a test function in \eqref{def_weak_solution} to get 
 \[
  \int_{\Om\times\{t\}} \ddt{[u]_h} (x,t) \phi(x)\varphi(t) \ dx +  \iprod{[\aa(x,t,\nabla u)]_h}{\nabla \phi}(x,t)\varphi(t) \ dx =0
 \]
 Using the Fundamental theorem of calculus, we get
 \begin{equation*}
  \begin{array}{ll}
   |\avgs{\phi}{{[u]_h\varphi}} (t_2) - \avgs{\phi}{{[u]_h\varphi}}(t_1)|
   &= \left|\iint_{\mathcal{B} \times (t_1,t_2)} \frac{d}{dt} \lbr[(] [u]_h(x,t) \phi(x)\varphi(t) \rbr[)] \ dz \right|\\
   & \leq \left|\iint_{\mathcal{B} \times (t_1,t_2)} \iprod{[\aa(x,t,\nabla u)]_h}{\nabla \phi}(x,t) \varphi(t)\ dz \right| + \\
   & \qquad + \left| \iint_{\mathcal{B} \times (t_1,t_2)} [u]_h(x,t) \phi(x) \ddt{\varphi(t)} \ dz \right|  \\
   & \overset{\redlabel{lemma3.4.1}{a}}{\leq} C(\La_1,p)\|\nabla \phi\|_{L^{\infty}{(\mathcal{B})}} \|\varphi\|_{L^{\infty}(t_1,t_2)} \iint_{\mathcal{B} \times (t_1,t_2)} {[|\nabla u|^{p-1} + |\th|^{p-1}]_h} \ dz \\
   & \qquad + \|\phi\|_{L^{\infty}{(\mathcal{B})}} \|\varphi'\|_{L^{\infty}(t_1,t_2)} \iint_{\mathcal{B} \times (t_1,t_2)} |[u]_h| \ dz.
  \end{array}
 \end{equation*}
 To obtain \redref{lemma3.4.1}{a} above, we made use of \eqref{abounded} and  \eqref{bound_b} which completes the proof.
\end{proof}

\subsection{Properties of the test function}
In this subsection, we shall prove all the properties that the constructed function in \eqref{lipschitz_extension} satisfies. Recall that $\Om^c$ is uniformly $p$-thick with constants $b_0,r_0$. 
The first lemma gives a point-wise bound for $\vlh$ on $\elam^c$. 

\begin{lemma}
\label{lemma3.7}
 For any $z \in \elam^c$, we have
 \begin{equation*}
  \label{bound_v_l_h}
  |\vlh(z)| \leq C_{(n,p,q,\lamot,b_0)} \rho \la.
 \end{equation*}
\end{lemma}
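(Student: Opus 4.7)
The first step is to collapse $v_{\lambda,h}(z)$ to a weighted combination of the averages $\tilde{u}_h^j$ for $z \in E_\lambda^c$. By property (W1), any such $z$ lies in $\frac{1}{2}Q_{i_0} \subset \frac{3}{4}Q_{i_0}$ for some index $i_0$, and (W10) then gives $\sum_{j\in A_{i_0}}\omega_j(z) = 1$. Any $j$ with $\omega_j(z) \neq 0$ must satisfy $z \in \frac{3}{4}Q_j$, so $\frac{3}{4}Q_j$ meets $\frac{3}{4}Q_{i_0}$ at $z$ and hence $j \in A_{i_0}$. Substituting these facts into \eqref{lipschitz_extension},
\[
v_{\lambda,h}(z) = \tilde{u}_h(z)\Bigl(1-\sum_{j\in A_{i_0}}\omega_j(z)\Bigr) + \sum_{j\in A_{i_0}}\omega_j(z)\,\tilde{u}_h^j = \sum_{j\in A_{i_0}}\omega_j(z)\,\tilde{u}_h^j.
\]
Since $\#A_{i_0} \leq c(n)$ by (W13) and $0\leq \omega_j\leq 1$, it suffices to show $|\tilde{u}_h^j| \leq C\rho\lambda$ for every $j \in A_{i_0}$.

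For the per-cube estimate, the case $\frac{3}{4}Q_j \not\subset \mcc_{8\rho}$ is immediate because then $\tilde{u}_h^j=0$ by \eqref{def_tuh_i}. Otherwise $\frac{3}{4}B_j \subset \Omega \cap B_{8\rho}$, which forces $r_j \leq C\rho$, and I would pass from the weighted average to a plain one via $|\tilde{u}_h^j|\leq C\fiint_{\frac{3}{4}Q_j}|\tilde{u}_h|\,dz$ (using $\|\omega_j\|_{L^1}\geq c|\frac{3}{4}Q_j|$ from $\omega_j\geq \chi_{\frac{1}{2}Q_j}$). The average is then controlled by three ingredients. First, property (W3) yields a point $z^*\in 16Q_j\cap E_\lambda$ with $\mathcal{M}(|\nabla\tilde{u}|^q)(z^*)\leq \lambda^q$, giving $\fiint_{KQ_j}|\nabla\tilde{u}|^q\leq C\lambda^q$ on any enlargement $KQ_j$ containing $z^*$. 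Second, a suitable enlargement of $B_j$ meets either $\Omega^c$ (where $[u]_h = 0$ by the zero boundary data, with capacity bounded below by the uniform $p$-thickness in Definition \ref{p_thick_domain}) or $(8B)^c$ (where $\eta\equiv 0$), and Theorem \ref{sobolev-poincare} then converts the gradient control into an absolute $L^q$-bound on $\tilde{u}_h$. Third, the time-jump term appearing in the parabolic Poincar\'e inequality (Lemma \ref{lemma_crucial_1}) is absorbed through Lemma \ref{lemma_crucial_2}, which rewrites it via the equation and returns a quantity involving $[|\nabla u|^{p-1}+\theta^{p-1}]_h$ that is again dominated by $\lambda^{p-1}$ through the second term in the definition of $g$ in \eqref{def_g}. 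Together with the intrinsic scaling $\gamma = \lambda^{2-p}$, these pieces combine to give $|\tilde{u}_h^j|\leq C\rho\lambda$.

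The principal technical difficulty is that $r_j$ can be much smaller than $\rho$, so a purely local Poincar\'e estimate within $\frac{3}{4}Q_j$ only produces $|\tilde{u}_h^j|\leq Cr_j\lambda$; the zero information on $\tilde{u}_h$ lives at distance up to $\rho$ from $Q_j$ and must be transferred across scales, either through a Whitney chain of overlapping $Q_i$'s or through a single global Sobolev--Poincar\'e step on a $\rho$-scale ball whose $p$-thick capacity absorbs the scale gain. In the parabolic setting this is further entangled with the coupling $\gamma r_j^2 = \lambda^{2-p}r_j^2$ between time and space radii, so the time-jump correction from Lemma \ref{lemma_crucial_2} must be absorbed at the matching scaling at every step---verifying that the three threads (maximal-function gradient bound, $p$-thick capacity, and time-jump absorption) combine cleanly to yield the $C\rho\lambda$ bound is the core technical work.
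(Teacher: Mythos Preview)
Your plan is correct and follows the paper's approach. The reduction to the per-cube bound $|\tilde u_h^j|\le C\rho\lambda$, the three ingredients you list (maximal-function gradient control from \descref{W4}, capacity Sobolev--Poincar\'e, equation-based time-jump absorption via Lemma~\ref{lemma_crucial_2}), and the identification of the scale-transfer difficulty when $r_j\ll\rho$ all match the paper exactly. For $r_j\le\rho$ the paper uses precisely the chain device you mention first: it telescopes
\[
|\tilde u_h^j|\ \apprle\ \sum_{m=0}^{k_0-2}\bigl|\avgs{2^mQ_j}{{[u]_h}}-\avgs{2^{m+1}Q_j}{{[u]_h}}\bigr|+\bigl|\avgs{2^{k_0-2}Q_j}{{[u]_h}}\bigr|,
\]
where $k_0$ is the first dyadic step at which $2^{k_0}Q_j$ either reaches spatial scale $\rho$ or meets $(\Omega\times\RR)^c$; each difference is bounded by $C\,2^{m+1}r_j\lambda$ via Lemma~\ref{lemma_crucial_1} with the time-jump handled by Lemma~\ref{lemma_crucial_2} and the intrinsic scaling $\ga=\la^{2-p}$, and the final term is treated as in the case $r_j\ge\rho$. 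The geometric sum gives $C\rho\lambda$.

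One caution on your second alternative: the ``single global Sobolev--Poincar\'e step on a $\rho$-scale ball'' does not work as stated. Passing from $\fiint_{\frac34Q_j}|\tilde u_h|$ to an average over an enlargement $KQ_j$ with $Kr_j\sim\rho$ costs a volume factor $K^{n+2}$, and the capacity bound in Theorem~\ref{sobolev-poincare} supplies no extra smallness to absorb this---it only delivers the correct scaling at the large ball. The telescoping chain is genuinely needed; stick with that route.
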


\begin{proof}
From \eqref{lipschitz_extension},  for any  $z \in \elam^c$, we see that $\vlh(z) = \sum_j \om_j(z) \tuh^j$ with $\tuh^j\neq 0$ if and only if $\frac34Q_j \subset \mcc_{8\rho}$, which automatically implies $\vlh(z) = 0$ for all $z \in \mcc_{8\rho}^c$. 

In order to prove the Lemma, making use of \descref{W8}, we see that \eqref{bound_v_l_h} follows if the following holds:
\begin{equation}
\label{claim_bound}
|\tuh^j| \leq C_{(n,p,q,\lamot,b_0)}\  \rho \la,  \txt{for all}\ j \in \NN.
\end{equation}
%


We shall now proceed with proving \eqref{claim_bound}.    Since we only have to consider the case  $\frac34Q_j \subset \mcc_{8\rho}$, we have the following two sub-cases:
 \begin{description}[leftmargin=*]
  \item[Case $r_j \geq \rho$:] In this case, we observe that $8B \subset 32B_j$ which gives the following sequence of estimates:
  \begin{equation}
  \label{est_1}
   \begin{array}{ll}
    |\tuh^j| 
    & \overset{\redlabel{3.13.a}{a}}{\apprle} \rho \hint_{32\tm_j} \lbr \hint_{32B_j} \left| \frac{\tuh(x,t)}{r_j} \right|^q \ dx\rbr^{\frac1{q}} \ dt 
     \overset{\redlabel{3.13.b}{b}}{\apprle} \rho \lbr \fiint_{32Q_j}  \left| \nabla \tuh(x,t) \right|^q \ dx\ dt\rbr^{\frac1{q}}  
     \overset{\redlabel{3.13.c}{c}}{\apprle} \rho \la. 
   \end{array}
  \end{equation}

  In order to obtain \redref{3.13.a}{a}, we have used the fact that $r_j \leq \frac{32}{3} \rho$ since $\frac34Q_j \subset \mcc_{8\rho}$.  To get \redref{3.13.b}{b}, we  used Theorem \ref{sobolev-poincare} since $32B_j \cap \Om^c \neq \emptyset$. Finally to obtain \redref{3.13.c}{c}, we made use of \descref{W4}.
  
  \item[Case $r_{j} \leq \rho$:] Let us define the following constant $k_0 := \min\{ \tk_1,\tk_2\}$, where $\tk_1$ and $\tk_2$ satisfy
   \begin{equation}\label{def_k_0}
2^{\tk_1 - 1} r_j < \rho \leq 2^{\tk_1} r_j \txt{and} 2^{\tk_2 -1} Q_j \cap (\Om \times \RR)^c = \emptyset \ \text{  but  } \ 2^{\tk_2}Q_j \cap (\Om \times \RR)^c \neq \emptyset.    
   \end{equation}

Using \eqref{cut_off_function} and  triangle inequality, we get
  \begin{equation}
   \label{est_2} 
   \begin{array}{ll}
    |\tuh^j| 
    & \overset{\text{}}{\apprle} \sum_{m=0}^{k_0 -2} \lbr \avgs{2^m Q_j}{{[u]_h}} - \avgs{2^{m+1}Q_j}{{[u]_h}}\rbr + \avgs{2^{k_0-2}Q_j}{{[u]_h}} := \sum_{m=0}^{k_0-2}S_1^m + S_2.
   \end{array}
  \end{equation}
  \begin{description}[leftmargin=*]
   \item[Estimate for $S_1^m$ for $m \leq {k_0-2}$:]  In this case,  we see that  $2^{m+1}Q_j \subset \mcc_{8\rho}$. Thus applying Lemma \ref{lemma_crucial_1} for any $\mu \in C_c^{\infty}(B(2^{m+1}r_j, x_j))$ satisfying $| \mu(x)| \leq \frac{C(n)}{(2^{m+1}r_j)^{n}}$ and $|\nabla \mu(x)| \leq \frac{C(n)}{(2^{m+1}r_j)^{n+1}}$, we get
   \begin{equation}
   \label{S_1_1}
    \begin{array}{ll}
     S_1^m
     & \apprle  (2^{m+1}r_j) \lbr \fiint_{2^{m+1}Q_j} |[\nabla u]_h|^q \ dz + \sup_{t_1,t_2 \in {2^{m+1}I_j}} \left|\frac{\avgs{\mu}{{[u]_h}}(t_2)-\avgs{\mu}{{[u]_h}}(t_1)}{2^{m+1}r_j} \right|^q\rbr^{\frac1{q}}.
    \end{array}
   \end{equation}

Since $B(x_j,2^{m+1}r_j) \subset \Om$, we can apply Lemma \ref{lemma_crucial_2} with the test function $\phi(x)=\mu(x)$ and $\varphi(t) \equiv 1$, which  for any $t_1,t_2 \in \frac34 I_j$, gives
\begin{equation}
\label{S_1_2}
 \begin{array}{ll}
  |\avgs{\mu}{{[u]_h}}(t_2)-\avgs{\mu}{{[u]_h}}(t_1)| & \apprle  \|\nabla \mu \|_{L^{\infty}} \iint_{2^{m+1}Q_j} [|\nabla u|^{p-1}+|\th|^{p-1}]_h \ dz\\
  & \apprle  2^{m+1} r_j \ga \fiint_{2^{m+1}Q_j} [|\nabla u|^{p-1}+|\th|^{p-1}]_h \ dz.
 \end{array}
\end{equation}

If $m \leq 3$, then we rescale to reach $16Q_j$ and use \descref{W4}, and  if $m \geq 4$, then trivially we can use \descref{W4}. Thus combining \eqref{S_1_1} and \eqref{S_1_2} along with \descref{W4}, we get
\begin{equation}
 \label{S_1_3}
 S_1^m \apprle 2^{m+1} r_j \lbr \la^q + (\la^{p-1}\ga)^q +  \rbr^{\frac{1}{q}} \apprle 2^{m+1}r_j \la ,
\end{equation}
where we have used $\ga = \la^{2-p}$.

   \item[Estimate for $S_2$:] This term can be easily estimated as in the case $r_j \geq \rho$ to obtain
   \begin{equation}
   \label{S_2_1}
    \begin{array}{ll}
     \avgs{2^{k_0-2}Q_j}{{[u]_h}} 
     & \apprle \rho \la. 
    \end{array}
   \end{equation}
   
  \end{description}
Substituting \eqref{S_1_3} and \eqref{S_2_1} into \eqref{est_2} and making use of \eqref{def_k_0}, we get
\begin{equation}
 \label{east_2}
 \begin{array}{ll}
  |\tuh^j|  \apprle\la  \sum_{m=0}^{k_0-2} 2^{m+1}r_j + \la \rho 
 \apprle \la  \sum_{m=0}^{k_0-2} 2^{m+1}2^{1-k_0}\rho + \la \rho 
  \leq C_{(n,p,q,\lamot,b_0)} \rho \la. 
 \end{array}
\end{equation}
\end{description}
Thus \eqref{est_1} and \eqref{east_2} imply  \eqref{claim_bound} holds, which proves the lemma.
\end{proof}
Now we prove a sharper estimate over $\Th_1$ from \eqref{theta_1}.
\begin{lemma}
\label{lemma3.8}
 For any $i \in \Th_1$ and any $j \in A_i$, there holds
 \begin{equation*}
  \label{3.26}
  |\tuh^i - \tuh^j| \leq C_{(n,p,q,\lamot,b_0)} \min\{\rho, r_i\} \la.
 \end{equation*}
\end{lemma}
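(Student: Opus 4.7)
The plan is to split into cases according to how $r_i$ compares with $\rho$ and, in the small-$r_i$ regime, whether $4B_i$ meets $\Om^c$. The key tools will be Lemma \ref{lemma3.7} for the coarse bound, Lemma \ref{lemma_crucial_1} (parabolic Poincar\'e) together with Lemma \ref{lemma_crucial_2} (time-oscillation via the very weak formulation) for the interior comparison, and Theorem \ref{sobolev-poincare} together with the $p$-thickness of $\Om^c$ for the boundary estimate.

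First, if $r_i \geq \rho/16$, then by \descref{W4} we also have $r_j \sim r_i$, and Lemma \ref{lemma3.7} applied to both indices with the triangle inequality immediately yields $|\tuh^i - \tuh^j| \apprle \rho\la = \min\{\rho, r_i\}\la$. The substantive case is therefore $r_i \ll \rho$, in which I must prove the sharper estimate $|\tuh^i - \tuh^j| \apprle r_i\la$. In this regime \descref{W14} places $\frac34 Q_j \subset 4Q_i$, so $4Q_i$ is a natural common comparison cylinder.

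In the sub-case $4B_i \subset \Om$, I apply Lemma \ref{lemma_crucial_1} on $4Q_i$ to $\tuh$ twice: once with weight $\om_i/\|\om_i\|_{L^1}$ (so that the comparison constant equals $\tuh^i$) and once with $\om_j/\|\om_j\|_{L^1}$ (giving $\tuh^j$). These weights satisfy the $L^\infty/L^1$ hypothesis of Lemma \ref{lemma_crucial_1} since $\|\om_i\|_{L^1} \sim |Q_i| \sim |Q_j| \sim |4Q_i|$ by \descref{W4}. The spatial-gradient term $\fiint_{4Q_i}|\nabla \tuh|^q\,dz$ is bounded by $\la^q$ via \descref{W3}: there exists $z^\star \in 16Q_i \cap \elam$ at which $g(z^\star) \leq \la$, and since a parabolic cylinder through $z^\star$ comparable to $16Q_i$ contains $4Q_i$, the maximal-function definition of $g$ furnishes the bound. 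The time-oscillation term is controlled through Lemma \ref{lemma_crucial_2} applied to a standard bump $\mu$ on $4B_i$ with $\varphi \equiv 1$: the PDE contribution is $\apprle \ga\la^{p-1} = \la$ after Jensen's inequality and the same $z^\star$-bound (using $\ga = \la^{2-p}$), while the contribution coming from $\tuh = [u]_h\eta\zeta$ through the product rule on $\zeta$ is handled by the size bounds of \eqref{cut_off_function} together with the pointwise bound on $\avgs{\mu}{[u]_h}$ supplied by Lemma \ref{lemma3.7}. Assembling these bounds and invoking the triangle inequality
\[
|\tuh^i - \tuh^j|^q \apprle \fiint_{4Q_i}|\tuh - \tuh^i|^q\,dz + \fiint_{4Q_i}|\tuh - \tuh^j|^q\,dz \apprle (r_i\la)^q
\]
closes the interior sub-case.

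When $4B_i \cap \Om^c \neq \emptyset$, I instead estimate $|\tuh^i|$ and $|\tuh^j|$ separately. Slicewise $\tuh(\cdot,t)$ vanishes on $\Om^c$, which by Theorem \ref{self_improv_cap} and Definition \ref{exponent_choice} is uniformly $q$-thick; Theorem \ref{sobolev-poincare} then gives $\hint_{4B_i}|\tuh(\cdot,t)|^q\,dx \apprle r_i^q \hint_{4B_i}|\nabla\tuh(\cdot,t)|^q\,dx$ for a.e.\ $t$, and time-averaging followed by the same $z^\star$-argument yields $|\tuh^i|,|\tuh^j| \apprle r_i\la$. The main obstacle throughout is the time-oscillation estimate in the interior sub-case: because $u$ is only a \emph{very weak} solution, the time derivative is accessible only through the Steklov-averaged equation, which forces any admissible test cylinder to lie strictly inside $(-T, T-h)$. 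The hypothesis $i \in \Th_1$, equivalently $8Q_i \subset 4\mch$, is precisely what makes Lemma \ref{lemma_crucial_2} applicable on $4Q_i$, and it is for this reason that cylinders in $\Th_2$ must be treated by a separate argument elsewhere in the paper.
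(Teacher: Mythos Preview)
Your overall strategy mirrors the paper's (Lemma~\ref{lemma3.7} for $r_i \gtrsim \rho$; then Lemmas~\ref{lemma_crucial_1}--\ref{lemma_crucial_2} for the interior situation and Theorem~\ref{sobolev-poincare} near the boundary), but the case split in the fine regime has a gap. You dichotomize on whether $4B_i$ meets $\Om^c$, yet by \eqref{def_tuh_i} one has $\tuh^k = 0$ whenever $\frac34 B_k \not\subset \Om \cap 8B$, hence also when $\frac34 B_k$ leaves $8B$ (where $\eta$, and therefore $\tuh$, vanishes). In your sub-case $4B_i \subset \Om$ it can still happen that $\frac34 B_j$ (or $\frac34 B_i$) exits $8B$; then $\tuh^j = 0$ by definition, while the $\om_j$-weighted average that Lemma~\ref{lemma_crucial_1} produces is in general nonzero, so the step ``giving $\tuh^j$'' fails.

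The paper avoids this by splitting instead on whether one of $\tuh^i,\tuh^j$ vanishes: if so, the surviving quantity is bounded directly by $r_j\la$ via Theorem~\ref{sobolev-poincare} on $16B_j$ (legitimate because $\tuh\equiv 0$ on $\Om^c\cup(8B)^c$, and both pieces are uniformly thick), and the Poincar\'e/equation argument is run only when both are nonzero---in which case $\frac34 B_i,\frac34 B_j\subset\Om$ automatically, and the paper works on $\frac34Q_i$ and $\frac34Q_j$ separately rather than on your common $4Q_i$. A smaller inaccuracy: the role of $i\in\Th_1$ is that it forces $\zeta\equiv 1$ on the relevant time interval, so there is no $\zeta'$-contribution to handle at all; your remarks about the product rule on $\zeta$ and about the Steklov interval $(-T,T-h)$ misidentify the actual obstruction that distinguishes $\Th_2$.
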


\begin{proof}
We only have to consider the case $r_i \leq \rho$, because if $\rho \leq r_i$, we can directly use Lemma \ref{lemma3.7} to get the desired conclusion.

If either $\tuh^i = 0$ or $\tuh^j = 0$, then using \descref{W4}, \descref{W14} and Theorem \ref{sobolev-poincare}, we get
%
%
\begin{equation}
\label{lemma3.8.1}
 \begin{array}{ll}
  |\tuh^j| & \apprle r_j \lbr \fiint_{16Q_j} \left| \frac{\tuh}{r_j} \right|^q \ dz \rbr^{\frac{1}{q}} \apprle r_j \lbr \fiint_{16Q_j} \left| {\nabla \tuh} \right|^q \ dz \rbr^{\frac{1}{q}} \apprle r_j \la. 
 \end{array}
\end{equation}
Combining \eqref{lemma3.8.1} and \eqref{claim_bound}, the lemma follows in this case.


Now let us consider the case of $\tuh^i \neq 0$ and $\tuh^j \neq 0$. This implies $\frac34Q_i \subset \mcc_{8\rho}$ and $\frac34Q_j \subset \mcc_{8\rho}$ along with $j \in \Th_1$. Using triangle inequality and \descref{W12}, we get
\begin{equation}
 \label{3.29}
 \begin{array}{ll}
 |\tuh^i - \tuh^j| 
 & \apprle \frac{|\frac34Q_i|}{|\frac34Q_i \cap \frac34Q_j|}\fiint_{\frac34Q_i} |\tuh(z) - \tuh^i| \ dz + \frac{|\frac34Q_j|}{|\frac34Q_i \cap \frac34Q_j|}\fiint_{ \frac34Q_j} |\tuh(z) - \tuh^j| \ dz \\
 & \apprle \fiint_{\frac34Q_i} |\tuh(z) - \tuh^i| \ dz + \fiint_{ \frac34Q_j} |\tuh(z) - \tuh^j| \ dz.
 \end{array}
\end{equation}

Let us now estimate each of the terms in \eqref{3.29} as follows: we apply H\"older's inequality followed by  Lemma \ref{lemma_crucial_1} with $\mu \in C_c^{\infty}(\frac34B_i)$ satisfying $|\mu(x)| \apprle \frac{1}{r_i^n}$ and $|\nabla \mu(x)| \apprle \frac{1}{r_i^{n+1}}$,  to get
\begin{equation}
 \label{3.30}
 \begin{array}{ll}
 \fiint_{\frac34Q_i} | \tuh(z) - \tuh^i| \ dz 
 & \apprle r_i \lbr \fiint_{\frac34Q_i} |\nabla \tuh|^q \ dz + \sup_{t_1,t_2 \in \frac34I_i} \left|\frac{\avgs{\mu}{\tuh}(t_2) - \avgs{\mu}{\tuh}(t_1)}{r_i} \right|^q \rbr^{\frac{1}{q}}.
 \end{array}
\end{equation}

The first term on the right of \eqref{3.30} can be controlled using \descref{W4}.  To control the second term of \eqref{3.30}, recalling  \eqref{def_u_tilde}, we apply Lemma \ref{lemma_crucial_2} with $\phi(x) = \eta(x) \mu(x)$ and $\varphi(t) = \zeta(t) \equiv 1$ on $\frac34I_i$ as a test function. Further making  use of \eqref{bound_b}, we get
\begin{equation}
 \label{3.31}
 \begin{array}{ll}
  |\avgs{\mu}{\tuh}(t_2) - \avgs{\mu}{\tuh}(t_1)| & \apprle \| \nabla (\eta \mu) \|_{\infty} \iint_{\frac34Q_i} [|\nabla u|^{p-1}+ |\th|^{p-1}]_h \ dz \\
  & \overset{\redlabel{3.24.a}{a}}{\apprle} \lbr \frac{1}{\rho r_i^n} + \frac{1}{r_i^{n+1}}  \rbr |16Q_i| \la^{p-1} 
   \overset{\redlabel{3.24.b}{b}}{\apprle}   \la r_i.
 \end{array}
\end{equation}
To obtain \redref{3.24.a}{a}, we again made use of \eqref{bound_b}, \descref{W4},  \eqref{cut_off_function} and the structure of $\mu$. To obtain \redref{3.24.b}{b}, we used $r_i \leq \rho$ and $\ga = \la^{2-p}$.

Combining \eqref{3.30} and \eqref{3.31}, we get in the case $i \in \Th_1$, the bound
\begin{equation}
 \label{bound_when_i_1}
 \fiint_{\frac34Q_i} | \tuh(z) - \tuh^i| \ dz  \apprle r_i \la.
\end{equation}
%
%
%
%
%
%
This completes the proof of the lemma. 
\end{proof}

\begin{lemma}
\label{lemma3.9}
 Given any  $z \in \elam^c$, we have $z \in \frac34Q_i$ for some $i \in \NN$. If either  $i \in \Th_1$ or $i \in \Th_2$ with $\rho \leq r_i$, then there holds
 \begin{equation}
  \label{3.34}
  |\nabla \vlh(z)| \leq C_{(n,p,q,\lamot,b_0)} \la.
 \end{equation}

\end{lemma}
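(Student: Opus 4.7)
The plan is to start from the explicit formula \eqref{lipschitz_extension} and exploit the partition-of-unity identity \descref{W10}. Rewrite
\[
\vlh(z) = \tuh(z)\Bigl(1 - \sum_j \om_j(z)\Bigr) + \sum_j \om_j(z)\,\tuh^j.
\]
Since $z \in \elam^c = \bigcup_j \tfrac12 Q_j$, in fact $z$ belongs to the support of only finitely many $\om_j$'s, and on a neighborhood of $z$ we have $\sum_j \om_j \equiv 1$ by \descref{W10} together with \descref{W13}. Therefore in that neighborhood
\[
\vlh(z) = \sum_{j\in A_i} \om_j(z)\,\tuh^j.
\]
Differentiating and using $\sum_{j\in A_i}\nabla\om_j(z)=0$, I would introduce a free constant $\tuh^i$ to obtain the telescoped form
\[
\nabla \vlh(z) = \sum_{j\in A_i} \nabla\om_j(z)\,\bigl(\tuh^j - \tuh^i\bigr).
\]

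The estimate then reduces to controlling each difference $|\tuh^j - \tuh^i|$ for $j \in A_i$. By \descref{W9} we have $|\nabla \om_j| \le C/r_j$, by \descref{W4} the radii satisfy $r_j \approx r_i$, and by \descref{W13} the sum has at most $c(n)$ terms, so
\[
|\nabla \vlh(z)| \le \frac{C}{r_i} \sum_{j\in A_i} |\tuh^j - \tuh^i|.
\]
It now suffices to show $|\tuh^j - \tuh^i| \le C\, r_i\, \la$ for every $j \in A_i$, in each of the two cases.

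For $i\in\Th_1$: by \descref{W14} every $j\in A_i$ still lies within $4Q_i\subset 4\mch$, so each such $j$ is handled by Lemma~\ref{lemma3.8}, which gives $|\tuh^i-\tuh^j|\le C\min\{\rho,r_i\}\la\le C r_i \la$. For $i\in\Th_2$ with $\rho\le r_i$: both $\tuh^i$ and $\tuh^j$ (for $j \in A_i$, with $r_j\approx r_i\ge\rho/2$) satisfy the crude pointwise bound of Lemma~\ref{lemma3.7}, so
\[
|\tuh^j - \tuh^i| \le |\tuh^j| + |\tuh^i| \le C\,\rho\,\la \le C\,r_i\,\la.
\]
Substituting into the displayed bound yields \eqref{3.34}.

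I do not expect a serious obstacle here: the work has already been done in Lemmas~\ref{lemma3.7} and \ref{lemma3.8}, and the argument is a routine consequence of the Whitney-partition identities once one reduces the gradient to the telescoped sum. The only point requiring mild care is verifying that, in the $\Th_1$ case, the hypotheses of Lemma~\ref{lemma3.8} apply to every $j\in A_i$ (which follows from the inclusion $\tfrac34 Q_j \subset 4Q_i$ in \descref{W14} together with $8Q_i \subset 4\mch$), and that in the $\Th_2$ case the condition $\rho\le r_i$ propagates to $j\in A_i$ through $r_j \ge \tfrac12 r_i \ge \tfrac12 \rho$, so that Lemma~\ref{lemma3.7} combined with the extra factor comparing $\rho$ to $r_i$ is harmless.
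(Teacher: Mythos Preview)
Your proposal is correct and follows essentially the same route as the paper: reduce $\nabla\vlh$ on $\elam^c$ to the finite sum $\sum_{j\in A_i}\nabla\om_j\,\tuh^j$, telescope via $\sum_j\nabla\om_j=0$, and then invoke Lemma~\ref{lemma3.8} in the $\Th_1$ case and the bound \eqref{claim_bound} from Lemma~\ref{lemma3.7} in the $\Th_2$ case. The only cosmetic difference is that in the $\Th_2$ case the paper skips the telescoping and bounds $\sum_j|\nabla\om_j||\tuh^j|$ directly, which is of course equivalent to your triangle-inequality step.
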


\begin{proof}
 Let us prove each of the cases separately:
 \begin{description}[leftmargin=*]
  \item[Case $i\in \Th_1$:] In this case, we observe that $\sum_{j} \om_j(z) = \sum_{j\in A_i} \om_j(z)=1$ for any $z \in \elam^c$, which implies
  \begin{equation}
   \label{3.35}
  \sum_j \nabla \om_j(z) =  \nabla \lbr \sum_j \om_j(z) \rbr = 0 \qquad \forall \ z \in \elam^c.
  \end{equation}

  Thus using \eqref{lipschitz_extension} and  \eqref{3.35}, we get
  \begin{equation*}
   \label{3.36}
   \begin{array}{ll}
   \nabla \vlh(z) = \sum_j \nabla \om_j(z) \tuh^j 
   & = \sum_{ j\in A_i} \nabla \om_j(z) \lbr \tuh^j - \tuh^i\rbr. 
   \end{array} 
  \end{equation*}

  Now making use of \descref{W9} and  \descref{W13} along with \eqref{theta} and Lemma \ref{lemma3.8}, we get
  \begin{equation*}
  \label{3.37}
   |\nabla \vlh(z)| \apprle \sum_{ j\in A_i} \frac{1}{r_j} \min\{r_i, \rho\} \la \apprle \la.
  \end{equation*}

  \item[Case $i \in \Th_2$ and $\rho \leq r_i$:] This is a simpler case, as we can directly use \eqref{lipschitz_extension} and  \eqref{claim_bound} along with \descref{W13} to get
  \begin{equation*}
   \label{3.38}
    |\nabla \vlh(z)|  \apprle \sum_{ j\in A_i} |\nabla \om_j(z)| |\tuh^j| \apprle \sum_{ j\in A_i} \frac{1}{r_j} \rho \la  \apprle \la.
  \end{equation*}
 \end{description}
 This completes the proof of the Lemma. 
\end{proof}

\subsection{Estimates on the spatial derivative of \texorpdfstring{$\vlh$}.}
In this subsection, we shall prove a few useful estimates on the spatial derivative of $\vlh(z)$ for $z \in \elam^c$. 

\begin{lemma}
 \label{lemma3.10.1}
 Let $z \in \elam^c$ and $\ve >0$ be any number, then there exists a constant $C(n,p,q,\lamot,b_0)$ such that the following holds:
 \begin{align}
  |\vlh(z)| & \leq C \fiint_{4Q_i}|\tuh(y,s)| \ dy\ ds \leq  \frac{Cr_i\la}{\varepsilon} + \frac{C\varepsilon}{\la r_i} \fiint_{4Q_i}|\tuh(y,s)|^2 \ dy\ ds, \label{lemma3.10_bound1}\\
  |\nabla \vlh(z)| &\leq C \frac{1}{r_i} \fiint_{4Q_i}|\tuh(y,s)| \ dy\ ds \leq  \frac{C \la}{\varepsilon} + \frac{C\varepsilon}{\la r_i^2} \fiint_{4Q_i}|\tuh(y,s)|^2 \ dy\ ds\label{lemma3.10_bound2}.
 \end{align}
\end{lemma}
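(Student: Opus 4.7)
The plan is to use the Whitney decomposition properties established in Lemma \ref{whitney_decomposition} to reduce the local behavior of $\vlh$ near $z$ to an average of $|\tuh|$ over the enclosing cylinder $4Q_i$, and then apply Young's inequality to convert this $L^1$ average into an $L^2$ average plus a term involving $r_i\la$.

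First I would fix $z \in \elam^c$, and use \descref{W1} to locate a Whitney cylinder $Q_i$ with $z \in \tfrac12 Q_i \subset \tfrac34 Q_i$. Since $z \in \elam^c$ implies $\sum_j \om_j(z) = 1$ by \descref{W10}, the definition \eqref{lipschitz_extension} simplifies to
\[
 \vlh(z) = \sum_{j \in A_i} \om_j(z)\,\tuh^j,\qquad \nabla \vlh(z) = \sum_{j\in A_i}\nabla\om_j(z)\,\tuh^j,
\]
where only finitely many terms appear by \descref{W13}. To control each $|\tuh^j|$, I would use \eqref{def_tuh_i} together with \descref{W8} (which gives $\|\om_j\|_{L^1(\frac34 Q_j)} \gtrsim |Q_j|$) and the bound $\|\om_j\|_\infty \leq 1$ from \descref{W9}, yielding $|\tuh^j| \leq C_n\,\fiint_{\frac34 Q_j}|\tuh(y,s)|\,dy\,ds$ whenever $\tuh^j\neq 0$, and trivially $|\tuh^j|=0$ otherwise.

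Next, combining \descref{W14} (so $\tfrac34 Q_j \subset 4Q_i$) with \descref{W4} (so $|Q_j|$ and $|Q_i|$ are comparable, hence so are $|\tfrac34 Q_j|$ and $|4Q_i|$), I absorb each local average into a single average over $4Q_i$:
\[
  \fiint_{\frac34 Q_j}|\tuh|\,dz \leq C_n \fiint_{4Q_i}|\tuh|\,dz.
\]
Summing over $j\in A_i$ and using \descref{W13} produces the first inequality of \eqref{lemma3.10_bound1}. For the gradient bound \eqref{lemma3.10_bound2} the argument is identical, except I use $|\nabla\om_j|\leq C/r_j$ from \descref{W9} and $r_j \sim r_i$ from \descref{W4} to factor out $1/r_i$.

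For the second inequalities in each line, I apply Cauchy--Schwarz to pass from the $L^1$-average to the $L^2$-average, and then Young's inequality in the form $\sqrt{X} \leq \frac{a}{2} + \frac{X}{2a}$ with the choice $a = r_i\la/\varepsilon$:
\[
  \fiint_{4Q_i}|\tuh|\,dz \leq \lbr \fiint_{4Q_i}|\tuh|^2\,dz\rbr^{\!1/2}\!\!\leq \frac{r_i\la}{2\varepsilon} + \frac{\varepsilon}{2 r_i\la}\fiint_{4Q_i}|\tuh|^2\,dz,
\]
which after multiplying by the universal constant (and by $1/r_i$ for the gradient inequality) gives exactly the stated bounds.

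There is no real obstacle here: the lemma is a bookkeeping consequence of the Whitney structure and a standard Young's inequality. The only subtlety worth flagging is the case $\tuh^j=0$, handled separately by \eqref{def_tuh_i}, and the careful use of \descref{W4} and \descref{W14} to make sure every average is absorbed into the single cylinder $4Q_i$ associated with the point $z$.
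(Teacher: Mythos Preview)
Your proof is correct and follows essentially the same approach as the paper: exploit the Whitney partition to write $\vlh(z)=\sum_{j\in A_i}\om_j(z)\tuh^j$, bound each $|\tuh^j|$ by the average of $|\tuh|$ over $\tfrac34 Q_j$, absorb into $4Q_i$ via \descref{W14} and \descref{W4}, and then apply Young's inequality for the second estimate. The only cosmetic difference is that the paper applies Young's inequality pointwise to $|\tuh|=\sqrt{\la r_i/\ve}\cdot|\tuh|\sqrt{\ve/(\la r_i)}$ before integrating, whereas you first pass through Cauchy--Schwarz and then apply Young's to the resulting scalar; the two are equivalent.
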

\begin{proof} Let us prove each of the estimates as follows:
 \begin{description}[leftmargin=*]
  \item[Proof of \eqref{lemma3.10_bound1}:] Making use of \descref{W9} along with \eqref{lipschitz_extension}, \descref{W13} and \descref{W14}, we get
  \begin{equation*}
    |\vlh(z)| 
     \apprle \sum_{j \in A_i} |\om_j(z)| \fiint_{\frac34Q_j} |\tuh(y,s)|  \ dy \ ds 
     \apprle \fiint_{4Q_i} |\tuh(y,s)|  \ dy \ ds.
  \end{equation*}
  To get the second inequality in \eqref{lemma3.10_bound1}, we use Young's inequality to get
  \begin{equation*}
   \fiint_{4Q_i} |\tuh(y,s)|  \sqrt{\frac{\la r_i}{\ve}} \sqrt{\frac{\ve}{\la r_i}}\ dy \ ds \apprle \frac{r_i\la}{\varepsilon} + \frac{\varepsilon}{\la r_i} \fiint_{4Q_i}|\tuh(y,s)|^2 \ dy\ ds.
  \end{equation*}
\item[Proof of \eqref{lemma3.10_bound2}:] This again follows exactly as the bound for \eqref{lemma3.10_bound1}, but in this case we additionally make use of the fact that $|\nabla \om_j| \apprle \frac{1}{r_j}$ along with \descref{W5} to get the desired conclusion. 
 \end{description}
 This proves the lemma.
\end{proof}

\begin{lemma}
 \label{lemma3.10.2}
 Let $z \in \elam^c$ and $\ve \in (0,1)$ be given.  From Lemma \ref{whitney_decomposition}, we have $z \in \frac34Q_i$ for some $i \in \NN$. Suppose $i \in \Th_1$, then there holds:
 \begin{align}
  |\vlh(z)|& \leq C_{(n,p,q,\lamot,b_0)} \lbr \min\{ \rho, r_i\} \la + |\tuh^i| \rbr \leq  C_{(n,p,q,\lamot,b_0)} \lbr \frac{ r_i\la}{\ve} + \frac{\ve}{r_i \la} |\tuh^i|^2 \rbr  \label{lemma3.10_bound3}\\
  |\nabla \vlh(z)| &\leq C_{(n,p,q,\lamot,b_0)} \frac{\la}{\ve}.\label{lemma3.10_bound4}
 \end{align}
\end{lemma}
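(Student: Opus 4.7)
The second estimate \eqref{lemma3.10_bound4} is essentially immediate from what has already been established: since $i \in \Theta_1$, Lemma \ref{lemma3.9} applies and delivers $|\nabla \vlh(z)| \leq C(n,p,q,\lamot,b_0)\la$, which is stronger than $C\la/\ve$ for $\ve \in (0,1)$. So there is nothing new to do for the gradient bound beyond invoking Lemma \ref{lemma3.9}.

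The interesting statement is the first estimate \eqref{lemma3.10_bound3}. The plan is to rewrite $\vlh$ in a convenient form on $\elam^c$ and then reduce to Lemma \ref{lemma3.8}. Namely, since $z \in \frac34 Q_i \subset \elam^c$, property \descref{W10} gives $\sum_{j \in A_i} \om_j(z) = 1$, and $\om_j(z) = 0$ for $j \notin A_i$ by \descref{W8}. Combining this with the defining formula \eqref{lipschitz_extension} for $\vlh$, the term $\tuh(z)$ cancels and we obtain
\[
 \vlh(z) = \sum_{j \in A_i} \om_j(z)\, \tuh^j.
\]
Now I add and subtract $\tuh^i$ inside the sum and use $\sum_{j\in A_i}\om_j(z)=1$ once more together with $|\om_j|\leq 1$ from \descref{W9} to get
\[
 |\vlh(z)| \leq \sum_{j \in A_i} \om_j(z)\, |\tuh^j - \tuh^i| \;+\; |\tuh^i|.
\]
Since $i \in \Theta_1$, Lemma \ref{lemma3.8} bounds each difference by $C(n,p,q,\lamot,b_0)\min\{\rho,r_i\}\la$, and the cardinality control \descref{W13} absorbs the sum. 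This yields the first inequality of \eqref{lemma3.10_bound3}.

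For the second inequality of \eqref{lemma3.10_bound3}, I would simply discard $\min\{\rho,r_i\}\leq r_i$ and use $\ve<1$ to write $\min\{\rho,r_i\}\la \leq r_i\la/\ve$, while Young's inequality applied to the product $|\tuh^i|\cdot 1 = \sqrt{\ve/(r_i\la)}\,|\tuh^i|\cdot\sqrt{r_i\la/\ve}$ gives
\[
 |\tuh^i| \leq \tfrac{1}{2}\lbr\frac{\ve}{r_i\la}|\tuh^i|^2 + \frac{r_i\la}{\ve}\rbr.
\]
Adding the two bounds produces the right-hand side of \eqref{lemma3.10_bound3}. There is no real obstacle here: the whole argument is a clean consequence of Lemmas \ref{lemma3.8} and \ref{lemma3.9} once one exploits the partition-of-unity identity on $\elam^c$ to recenter the expansion of $\vlh$ at $\tuh^i$.
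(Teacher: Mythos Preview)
Your proof is correct and follows essentially the same route as the paper: both recenter the partition-of-unity expansion of $\vlh$ at $\tuh^i$ via $\sum_{j\in A_i}\om_j(z)=1$, apply Lemma \ref{lemma3.8} and \descref{W13} for the first inequality of \eqref{lemma3.10_bound3}, and finish with Young's inequality. For \eqref{lemma3.10_bound4} you invoke Lemma \ref{lemma3.9} directly, whereas the paper repeats the same computation inline; these are the same argument.
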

\begin{proof}
 Let us prove each of the estimates as follows:
 \begin{description}[leftmargin=*]
  \item[Proof of \eqref{lemma3.10_bound3}:] Using triangle inequality, \descref{W9}, \descref{W13} and Lemma \ref{lemma3.8}, we get
  \begin{equation*}
    |\vlh(z)| 
     \apprle  \sum_{j : j \in A_i} \om_j(z)|\tuh^j -\tuh^i| + |\tuh^i| 
     \apprle \min\{ \rho, r_i\} \la + |\tuh^i| \overset{\redlabel{3.33.a}{a}}{\apprle}\frac{r_i\la}{\ve} + \frac{\ve}{r_i \la} |\tuh^i|^2.
  \end{equation*}
  To obtain \redref{3.33.a}{a}, we made use of Young's inequality and this proves the estimate. 
  \item[Proof of \eqref{lemma3.10_bound4}:] This follows exactly as \eqref{lemma3.10_bound3}, but in this case we additionally make use of the fact that $|\nabla \om_j| \apprle \frac{1}{r_j}$ along with \descref{W5} to get the desired conclusion. 
 \end{description}
 This proves the lemma.
\end{proof}

\begin{lemma}
 \label{lemma3.10.3}
 Let $z \in \elam^c$, then from Lemma \ref{whitney_decomposition}, we have $z \in \frac34Q_i$ for some $i \in \NN$. Suppose $i \in \Th_2$, then there holds:
  \begin{align}  
  | \vlh(z)| &\leq C_{(n,p,q,\lamot,b_0)}\lbr  r_i \la + \frac{\la^{1-p}r_i}{s} \fiint_{4Q_i}|\tuh(y,s)|^2 \ dy\ ds \rbr,\label{lemma3.10_bound5}\\
  |\nabla \vlh(z)| &\leq C_{(n,p,q,\lamot,b_0)}\lbr  \la + \frac{\la^{1-p}}{s} \fiint_{4Q_i}|\tuh(y,s)|^2 \ dy\ ds\rbr.\label{lemma3.10_bound6} 
 \end{align}
\end{lemma}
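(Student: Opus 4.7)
The strategy is to reduce both bounds to Lemma \ref{lemma3.10.1} by exploiting a geometric lower bound on the Whitney radius $r_i$ that is automatic for indices $i\in\Th_2$. Specifically, I would first apply Lemma \ref{lemma3.10.1} with the choice $\ve=1$ to any $z\in\elam^c$ with $z\in\frac34 Q_i$, which yields
\begin{equation*}
 |\vlh(z)|\leq C r_i\la+\frac{C}{\la r_i}\fiint_{4Q_i}|\tuh|^2\,dy\,ds,\qquad |\nabla \vlh(z)|\leq C\la+\frac{C}{\la r_i^{2}}\fiint_{4Q_i}|\tuh|^2\,dy\,ds,
\end{equation*}
with $C=C(n,p,q,\lamot,b_0)$. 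It then remains to absorb $\frac{1}{\la r_i}$ and $\frac{1}{\la r_i^{2}}$ into $\frac{\la^{1-p}r_i}{s}$ and $\frac{\la^{1-p}}{s}$ respectively, which is the content of the geometric step below.

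For the geometric step, recall that $i\in \Th_2$ means $\frac34 Q_i\cap 2\mch\neq \emptyset$ \emph{and} $8Q_i\cap (4\mch)^c\neq\emptyset$, while $Q_i=B_{r_i}(x_i)\times (t_i-\ga r_i^2,t_i+\ga r_i^2)$ with $\ga=\la^{2-p}$. Writing out these two conditions in the time variable yields
\begin{equation*}
  |t_i|<4s+\tfrac{9}{16}\,\ga r_i^{2}\qquad\text{and}\qquad |t_i|>16s-64\,\ga r_i^{2},
\end{equation*}
so that subtracting one from the other forces $(\tfrac{9}{16}+64)\ga r_i^{2}>12 s$, i.e.\ $\ga r_i^{2}\geq c(n)\,s$, or equivalently $r_i^{2}\geq c(n)\, s\la^{p-2}$. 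Multiplying and dividing by $\la$ this rearranges to the two inequalities
\begin{equation*}
 \frac{1}{\la r_i^{2}}\leq C(n)\,\frac{\la^{1-p}}{s},\qquad \frac{1}{\la r_i}=\frac{r_i}{\la r_i^{2}}\leq C(n)\,\frac{\la^{1-p}r_i}{s}.
\end{equation*}
Substituting these into the two estimates from Lemma \ref{lemma3.10.1} immediately produces \eqref{lemma3.10_bound5}--\eqref{lemma3.10_bound6}.

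The entire argument is short once the geometric inequality $r_i^{2}\ga \gtrsim s$ is in hand, and the only real care needed is in the bookkeeping of the scaling factors coming from $\frac34 Q_i$ and $8Q_i$ (the $\frac{9}{16}$ and $64$ above). I expect no analytic obstacles beyond this, since the pointwise structural bounds provided by Lemma \ref{lemma3.10.1} already encode the full dependence of $\vlh$ and $\nabla\vlh$ on the local $L^{2}$-average of $\tuh$, and the $\Th_2$ hypothesis is precisely what upgrades the radius-dependent denominator $r_i$ into the desired $s$-dependent one.
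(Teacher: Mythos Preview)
Your proposal is correct and follows essentially the same approach as the paper: apply Lemma \ref{lemma3.10.1} with $\ve=1$, then use the geometric fact that $i\in\Th_2$ forces $\ga r_i^2\gtrsim s$ to convert the $r_i$-denominators into $s$-denominators. Your derivation of the inequality $\ga r_i^2\gtrsim s$ from the time-interval constraints is in fact more explicit than the paper's, which simply asserts the bound by appeal to the definition of $\Th_2$ and \descref{W5}.
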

\begin{proof}
 Since $i \in \Th_2$, we must have from the definition of $\Th_2$ in \eqref{theta_2} and \descref{W5} that $\ga r_j^2 \apprge s$ for all $j \in A_i$,  where $\ga = \la^{2-p}$. Setting $\ve = 1$ in Lemma \ref{lemma3.10.1} and making use of the bound $\la^{2-p} r_i^2 \apprge s$ (which uses \descref{W5}) along with \descref{W13}, the proof of the lemma follows.
\end{proof}

\subsection{Estimates on the time derivative of \texorpdfstring{$\vlh$}.}
\begin{lemma}
\label{lemma3.11}
Let $z \in \elam^c$, then from Lemma \ref{whitney_decomposition}, there exists an $i \in \NN$ such that $z \in \frac34Q_i$. Then the following estimates for the time derivative of $\vlh$ holds:
 \begin{align}
 |\pa_t \vlh(z)| & \leq C_{(n,p,q,\lamot,b_0)} \frac{1}{\ga r_i^2} \fiint_{4Q_i} |\tuh(y,s)| \ dy\ ds. \label{lemma3.11.bound1} 
 \end{align}
 If $i \in \Th_1$, then we have
 \begin{align}
   |\pa_t \vlh(z)| & \leq C_{(n,p,q,\lamot,b_0)} \frac{1}{\ga r_i^2} \min\{r_i,\rho\} \la. \label{lemma3.11.bound2}
  \end{align}
  If $i \in \Th_2$, then there holds
  \begin{align}
   |\pa_t \vlh(z)| & \leq C_{(n,p,q,\lamot,b_0)} \frac{\rho \la}{s}   \label{lemma3.11.bound3}.
 \end{align}

\end{lemma}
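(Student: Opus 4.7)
\textbf{Proof plan for Lemma \ref{lemma3.11}.} The plan is to first derive a single compact formula for $\pa_t \vlh$ on $\elam^c$ and then exploit it in three different ways, one for each claimed estimate. Since $z \in \frac34 Q_i \subset \elam^c$, the partition of unity property \descref{W10} gives $\sum_{j \in A_i} \om_j \equiv 1$ on $\frac34 Q_i$, so the representation \eqref{lipschitz_extension} collapses to
\[
\vlh(z) \;=\; \tuh(z) - \sum_{j \in A_i} \om_j(z)\, \tuh(z) + \sum_{j \in A_i} \om_j(z)\, \tuh^j \;=\; \sum_{j \in A_i} \om_j(z)\, \tuh^j.
\]
Each $\tuh^j$ is a number (an integral average), so differentiating in time leaves only the $\om_j$ factors:
\[
\pa_t \vlh(z) \;=\; \sum_{j \in A_i} \pa_t \om_j(z)\, \tuh^j \;=\; \sum_{j \in A_i} \pa_t \om_j(z)\, \bigl(\tuh^j - \tuh^i\bigr),
\]
where the second form uses $\sum_{j \in A_i} \pa_t \om_j \equiv 0$ on $\frac34 Q_i$. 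Throughout, I use the parabolic bound $|\pa_t \om_j| \apprle (\ga r_j^2)^{-1}$ from \descref{W9}, the comparability $r_j \sim r_i$ from \descref{W4}, the inclusion $\frac34 Q_j \subset 4Q_i$ from \descref{W14}, and the finite overlap $\# A_i \leq c(n)$ from \descref{W13}.

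For \eqref{lemma3.11.bound1}, I plug $|\tuh^j| \leq C \fiint_{\frac34 Q_j}|\tuh|$ (immediate from \eqref{def_tuh_i} and $\|\om_j\|_{L^1} \sim |Q_j|$) into the first formula above, and use $|\pa_t \om_j| \apprle (\ga r_i^2)^{-1}$ together with the volume comparability $|\frac34 Q_j| \sim |Q_i|$ to absorb each $\frac34 Q_j$ inside $4Q_i$. For \eqref{lemma3.11.bound2}, when $i \in \Th_1$ I use the \emph{difference} formula and substitute the sharp bound $|\tuh^j - \tuh^i| \leq C \min\{\rho, r_i\}\la$ supplied by Lemma \ref{lemma3.8}, which immediately produces the $\min\{\rho,r_i\}\la/(\ga r_i^2)$ estimate.

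The case $i \in \Th_2$, which is the main obstacle, requires an additional geometric input. By \eqref{theta_2}, $8Q_i$ meets $(4\mch)^c$ while \eqref{theta} forces $\tfrac34 Q_i$ to meet $2\mch$; comparing time intervals, some point of $\tfrac34 Q_i$ has $|t|<4s$ and some point of $8 Q_i$ has $|t| \geq 16s$, so the total time-extent of $8Q_i$, which is $\lesssim \ga r_i^2$, is at least $\apprge s$. Hence $\ga r_i^2 \apprge s$, and by \descref{W4} the same holds for every $j \in A_i$. Using this, together with the uniform bound $|\tuh^j| \leq C\rho\la$ established as the intermediate inequality \eqref{claim_bound} inside the proof of Lemma \ref{lemma3.7}, the general formula gives
\[
|\pa_t \vlh(z)| \;\apprle\; \sum_{j \in A_i} \frac{1}{\ga r_j^2}\, \rho\la \;\apprle\; \frac{\rho\la}{s},
\]
which is \eqref{lemma3.11.bound3}. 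The remaining steps are routine bookkeeping with the Whitney properties.
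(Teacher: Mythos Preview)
Your proof is correct and follows essentially the same approach as the paper: you reduce $\vlh$ to $\sum_{j\in A_i}\om_j\tuh^j$ on $\elam^c$, differentiate in $t$, and then invoke \descref{W9}, \descref{W13}, \descref{W14} together with either the crude bound $|\tuh^j|\apprle \fiint_{\frac34 Q_j}|\tuh|$, Lemma~\ref{lemma3.8}, or \eqref{claim_bound}, exactly as the paper does. Your explicit justification of $\ga r_i^2\apprge s$ for $i\in\Th_2$ is a bit more detailed than the paper's ``automatically,'' but the argument is the same.
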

\begin{proof}
 We shall prove each of the estimates as follows:
 \begin{description}[leftmargin=*]
  \item[Estimate \eqref{lemma3.11.bound1}:] Using \descref{W9} and \descref{W13}, we can proceed analogous to how  \eqref{lemma3.10_bound1} was obtained to get 
  \begin{equation*}
  \begin{array}{ll}
   |\pa_t \vlh(z)| &\apprle \sum_{j:j \in A_i} |\pa_t \om_j(z)| |\tuh^j| 
    \apprle \frac{1}{\ga r_i^2} \fiint_{4Q_i} |\tuh(y,s)| \ dy\ ds .
   \end{array}
  \end{equation*}
  \item[Estimate \eqref{lemma3.11.bound2}:] Since $\sum_{j} \om_j(\tilde{z})\equiv 1$ for all $\tilde{z} \in \elam^c$, we must have $\sum_j \pa_t \om_j(\tilde{z}) = 0$, which gives
\begin{equation*}
\label{3.54}
 \begin{array}{ll}
  |\pa_t \vlh(z)| &= \left|\sum_{ j\in A_i} \pa_t\om_j(z) \tuh^j\right| \apprle \sum_{ j\in A_i} |\pa_t\om_j(z)| |\tuh^j-\tuh^i|
   \overset{\redlabel{3.54.a}{a}}{\apprle} \frac{1}{\ga r_i^2} \min\{ r_i,\rho\} \la.
 \end{array}
\end{equation*}
To get \redref{3.54.a}{a}, we have made use of Lemma \ref{lemma3.8} (which is applicable since $i \in \Th_1$) along with \descref{W5} and \descref{W13}. 
  \item[Estimate \eqref{lemma3.11.bound3}] Since $i \in \Th_2$, we automatically have $\ga r_i^2 \apprge s$. Now making use of  \eqref{claim_bound}, \descref{W5} and \descref{W13}, we get
  \begin{equation*}
   \begin{array}{ll}
    |\pa_t \vlh(z)| & \apprle \sum_{j\in A_i} |\pa_t \om_j(z)| |\tuh^j|  \apprle \sum_{j\in A_i} \frac{1}{\ga r_j^2} \rho \la 
     \apprle \frac{\rho \la}{s}.
   \end{array}
  \end{equation*}
 \end{description}
 This proves the lemma.
\end{proof}

\subsection{Main estimates and Lipschitz continuity of \texorpdfstring{$\vlh$}. } 
\begin{lemma}
\label{lemma3.12}
For any $\vartheta \geq 1$,  we have the following bound:
 \begin{equation*}
  \label{3.56}
  \iint_{8Q\setminus\elam } |\vlh(z)|^{\vartheta} \ dz \leq C_{(n,p,q,\lamot,b_0)} \iint_{8Q\setminus\elam } |\tuh(z)|^{\vartheta} \ dz.
 \end{equation*}

\end{lemma}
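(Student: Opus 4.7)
The plan is to combine the pointwise representation of $\vlh$ on $\elam^c$ with two applications of Jensen's inequality and then exploit the finite overlap of the Whitney decomposition from Lemma \ref{whitney_decomposition}.

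First, for any $z\in\elam^c$ there exists $i\in\NN$ with $z\in \frac12 Q_i$ by \descref{W1}, and since $\sum_{j\in A_i}\om_j(z)=1$ on $\frac34Q_i$ by \descref{W10}, the defining formula \eqref{lipschitz_extension} collapses to $\vlh(z)=\sum_j\om_j(z)\tuh^j$. As $\{\om_j(z)\}_j$ is a non-negative partition of unity at each such $z$, the convexity inequality (Jensen, applicable since $\vartheta\geq 1$) yields the pointwise bound
\begin{equation*}
|\vlh(z)|^{\vartheta} \leq \sum_j \om_j(z)|\tuh^j|^{\vartheta} \qquad \text{for all } z \in \elam^c.
\end{equation*}

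Next, I would estimate $|\tuh^j|^{\vartheta}$ by a second application of Jensen's inequality to the weighted average in \eqref{def_tuh_i}. Since $\om_j/\|\om_j\|_{L^1(\frac34Q_j)}$ is a probability density and \descref{W8} gives $\|\om_j\|_{L^1(\frac34Q_j)} \apprge |Q_j|$, together with $\om_j \leq 1$, I obtain
\begin{equation*}
|\tuh^j|^{\vartheta} \leq \frac{C(n)}{|Q_j|}\iint_{\frac34Q_j}|\tuh(z')|^{\vartheta}\,dz',
\end{equation*}
the estimate being trivial when $\tuh^j=0$ (which happens whenever $\frac34Q_j\not\subset \mcc_{8\rho}$).

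Now integrating the pointwise bound over $8Q\setminus\elam$, interchanging sum and integral, and using $\om_j \leq \lsb{\chi}{\frac34Q_j}$ from \descref{W8},
\begin{equation*}
\iint_{8Q\setminus\elam}|\vlh|^{\vartheta}\,dz \leq \sum_j |\tuh^j|^{\vartheta}\iint_{\frac34Q_j}\om_j\,dz \leq C(n)\sum_j \iint_{\frac34Q_j} |\tuh|^{\vartheta}\,dz'.
\end{equation*}
To close the estimate, I would invoke that $\spt(\tuh)\subset 8Q$ by \eqref{def_u_tilde} and \eqref{cut_off_function} and that $\frac34Q_j \subset 8Q_j \subset \elam^c$ by \descref{W3}. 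Then the finite overlap property \descref{W6} gives
\begin{equation*}
\sum_j \iint_{\frac34Q_j}|\tuh|^{\vartheta}\,dz' = \iint_{8Q\cap\elam^c}|\tuh|^{\vartheta}\sum_j \lsb{\chi}{\frac34Q_j}\,dz' \leq c(n)\iint_{8Q\setminus\elam}|\tuh|^{\vartheta}\,dz',
\end{equation*}
which yields the desired bound.

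I do not anticipate any serious obstacle: this is essentially a bookkeeping argument combining Jensen's inequality with the Whitney covering machinery already established in Lemma \ref{whitney_decomposition}. The only minor points to verify are that boundary cubes $\frac34Q_j$ not entirely contained in $8Q$ cause no loss (because $\tuh$ vanishes outside $8Q$), and that the vanishing case $\tuh^j=0$ only improves the inequality.
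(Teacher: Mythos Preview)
Your proof is correct and follows essentially the same approach as the paper: both use the identity $\vlh=\sum_j\om_j\tuh^j$ on $\elam^c$, apply Jensen's inequality to pass from $|\tuh^j|$ to averages of $|\tuh|^{\vartheta}$, and conclude via the finite overlap property of the Whitney cubes. The only cosmetic difference is that the paper first localizes to a fixed cube $\frac34Q_i$, exploits \descref{W14} to push all neighboring $\frac34Q_j$ into $4Q_i$, and then sums over $i$, whereas you integrate the pointwise bound globally and sum over $j$ directly; your organization is slightly more streamlined but the content is identical.
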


\begin{proof}
 Since $\elam^c$ is covered by Whitney cylinders (see Lemma \ref{whitney_decomposition}), let us pick some $i \in \NN$ and consider the corresponding parabolic Whitney cylinder. Using the construction from \eqref{lipschitz_extension} along with \descref{W5}, \descref{W9} and \descref{W13}, we get
 \begin{equation}
 \label{3.57}
   \iint_{\frac34Q_i} |\vlh(z)|^{\vartheta} \ dz  \apprle \sum_{j:j \in A_i} \iint_{\frac34Q_i} \om_j(z)^{\vartheta} |\tuh^j|^{\vartheta} 
    \apprle \iint_{4Q_i} |\tuh(z)|^{\vartheta} \ dz. 
 \end{equation}
 Summing \eqref{3.57} over all $i \in \NN$ and using $\spt(\tuh)\subset 8Q$ along with \descref{W4} and \descref{W7}, we get
\begin{equation}
 \iint_{8Q\setminus \elam} |\vlh(z)|^{\vartheta} \ dz \leq \sum_i\iint_{\frac34Q_i} |\vlh(z)|^{\vartheta} \ dz \apprle \sum_i \iint_{4Q_i} |\tuh(z)|^{\vartheta} \ dz \apprle \iint_{8Q\setminus \elam} |\tuh(z)|^{\vartheta} \ dz.
\end{equation}
This proves the Lemma.
\end{proof}

\begin{lemma}
\label{lemma3.13}
 For any  $i \in \Th_1$,  we have the following estimate: 
 \begin{equation}
 \label{3.59}
  \fiint_{\frac34Q_i} \left|\frac{\tuh(z) - \vlh(z)}{r_i}\right|^q \ dz \leq C_{(n,p,q,\lamot,b_0)} \la^q.
 \end{equation}

\end{lemma}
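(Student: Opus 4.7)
The plan is to mimic the structure of Lemma \ref{lemma3.8} and exploit the partition-of-unity identity in \descref{W10}. For $z \in \frac34Q_i$, since $\sum_{j \in A_i}\om_j(z) = 1$, I would first rewrite
\[
\tuh(z) - \vlh(z) = \sum_{j\in A_i} \om_j(z)\lbr \tuh(z) - \tuh^j\rbr,
\]
and then use \descref{W9}, \descref{W13} together with the triangle inequality to obtain
\[
\fiint_{\frac34Q_i} \left|\frac{\tuh-\vlh}{r_i}\right|^q dz \apprle \sum_{j \in A_i}\fiint_{\frac34Q_i}\left|\frac{\tuh-\tuh^j}{r_i}\right|^q dz.
\]

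Next, for each fixed $j \in A_i$, I would split $\tuh - \tuh^j = (\tuh - \tuh^i) + (\tuh^i - \tuh^j)$. The constant difference $|\tuh^i - \tuh^j|$ is controlled directly by Lemma \ref{lemma3.8}, which gives $|\tuh^i - \tuh^j| \leq C\min\{\rho,r_i\}\la \leq Cr_i\la$, contributing exactly the desired bound $C\la^q$. What remains is therefore
\[
\fiint_{\frac34Q_i} \left|\frac{\tuh - \tuh^i}{r_i}\right|^q dz.
\]

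To handle this, I would first consider the case $\frac34Q_i \subset \mcc_{8\rho}$ (so $\tuh^i$ is the genuine weighted average). Lemma \ref{lemma_crucial_1} applied with $\vartheta = q$ and weight $\rho = \om_i\lsb{\chi}{\frac34Q_i}$ together with $\mu \in C_c^{\infty}(\frac34 B_i)$ having $|\mu|\apprle r_i^{-n}$, $|\nabla \mu|\apprle r_i^{-n-1}$ produces two contributions: a spatial gradient term $\fiint_{\frac34Q_i}|\nabla \tuh|^q dz$ and a time-oscillation term. The gradient term is bounded by $C\la^q$ since by \descref{W3} there exists $z^* \in 16Q_i \cap \elam$, giving $g(z^*)\leq \la$ and hence $\fiint_{16Q_i}|\nabla\tu|^q dz \leq \la^q$ (from the maximal function appearing in \eqref{def_g}); transferring this bound to $\nabla\tuh$ on $\frac34Q_i$ uses Jensen's inequality in the Steklov parameter and \descref{W4}. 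The time-oscillation term is controlled exactly as in \eqref{3.31} via Lemma \ref{lemma_crucial_2} applied with $\phi = \eta\mu$ and $\varphi = \zeta$ (noting that $i \in \Th_1$ ensures $\tuh$ is supported properly in time so that the solution equation is available on $\frac34 I_i$), yielding the bound $Cr_i\la$ as before. When $\frac34Q_i \not\subset \mcc_{8\rho}$ we have $\tuh^i = 0$ and either $\frac34 B_i \not\subset 8B$ (so $\eta$ vanishes on a portion of $\frac34Q_i$ of positive $p$-capacity) or $\frac34 B_i \not\subset \Om$ (so the zero boundary value of $u$ combined with the uniform $p$-thickness of $\Om^c$ from Definition \ref{p_thick_domain} provides a set of positive capacity where $\tuh = 0$); in both situations Theorem \ref{sobolev-poincare} on $16Q_i$ directly gives $\fiint_{16Q_i}|\tuh/r_i|^q dz \apprle \fiint_{16Q_i}|\nabla\tuh|^q dz \apprle \la^q$, and \descref{W4} transfers this to $\frac34Q_i$.

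The main technical hurdle is the case $\tuh^i = 0$ at the boundary, where one cannot invoke Lemma \ref{lemma_crucial_1} with the weight giving $\tuh^i$ as its average, and must instead carefully verify that the capacity of the null set of $\tuh$ inside $\frac34 B_i$ (or more safely $16 B_i$) is comparable to the capacity of the ball itself; this is where the self-improving uniform $p$-thickness of $\Om^c$ (Theorem \ref{self_improv_cap}) and the exponent choice $q > p - \varepsilon_0$ from \eqref{def_q} enter in an essential way. Everything else is a parallel adaptation of the ingredients already assembled in the proofs of Lemmas \ref{lemma3.7} and \ref{lemma3.8}.
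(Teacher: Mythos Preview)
Your proposal is correct and follows essentially the same route as the paper: both use the partition-of-unity identity from \descref{W10} to write $\tuh - \vlh = \sum_{j\in A_i}\om_j(\tuh - \tuh^j)$, then split off the constant differences $\tuh^i - \tuh^j$ (handled by Lemma \ref{lemma3.8}) and bound the remaining oscillation $\fiint_{\frac34Q_i}|\tuh - \tuh^i|^q/r_i^q\,dz$ via Lemma \ref{lemma_crucial_1} plus Lemma \ref{lemma_crucial_2} when $\tuh^i\neq 0$, and via Theorem \ref{sobolev-poincare} (using the uniform thickness of $\Om^c$ or the support of $\eta$) when $\tuh^i = 0$. The paper's proof is slightly terser---it simply refers back to \eqref{3.30} and \eqref{lemma3.8.1} for these two cases---but your more explicit treatment of the capacity lower bound in the $\tuh^i = 0$ case is exactly what underlies those references.
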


\begin{proof}
 For any  $z \in \frac34Q_i$, using \eqref{lipschitz_extension} along with triangle inequality and  \descref{W9}, we get
 \begin{equation}
 \label{3.60}
    \fiint_{\frac34Q_i}|\tuh(z) - \vlh(z)|^q \ dz 
   \leq \fiint_{\frac34Q_i}|\tuh(z) - \tuh^i|^q \ dz +  \sum_{j:j \in A_i}\fiint_{\frac34Q_i} \left|\tuh^j - \tuh^i \right|^q\  dz := J_1 + J_2.
 \end{equation}

 We shall estimate each of the terms of \eqref{3.60} as follows (note that $i \in \Th_1$): 
 \begin{description}[leftmargin=*]
  \item[Estimate for $J_1$:] If $\tuh^i \neq 0$, then  $J_1$ is exactly as in \eqref{3.30}, which implies 
  \begin{equation}
  \label{3.61}
   J_1 \apprle (r_i \la)^q. 
  \end{equation}
   If $\tuh^i = 0$, then we can proceed as \eqref{lemma3.8.1} to again bound $J_1$ by \eqref{3.61}.

  \item[Estimate for $J_2$:] To bound this term, we can directly use Lemma \ref{lemma3.8} to get 
  \begin{equation}
  \label{3.62}
   J_2 \apprle (r_i \la)^q. 
  \end{equation}
 \end{description}
 Substituting \eqref{3.61} and \eqref{3.62} into \eqref{3.60} and making use of  \descref{W13}, the lemma follows.
\end{proof}


\begin{lemma}
\label{lemma2.5}
Given any  $i \in \Th_2$ and $\al_0$ as in \eqref{def_s}, there holds:
 \[
   \fiint_{\frac34Q_i} \left| \frac{\tuh - \tuh^i}{r_i} \right|^q \ dz \leq C_{(n,p,q,\lamot,b_0)} \la^q \lbr 1  + \lbr \frac{\la^{2-p}}{\al_0^{2-p}}\rbr^{\frac{n+1}{2}}\rbr^q.
 \]
\end{lemma}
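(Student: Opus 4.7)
The strategy parallels the proof of Lemma \ref{lemma3.13} for the $\Th_1$ case, with the crucial new feature that for $i \in \Th_2$ the time cutoff $\zeta$ need not be identically $1$ on $\frac{3}{4}I_i$, so the $\zeta'$-contribution in the time-oscillation estimate must be controlled. Handling this contribution is precisely what produces the correction factor $(\la^{2-p}/\al_0^{2-p})^{(n+1)/2}$.

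I would first apply Lemma \ref{lemma_crucial_1} on $\frac{3}{4}Q_i$ with measure $\om_i$ (which has the required $L^\infty$--$L^1$ compatibility by \descref{W8} and \descref{W9}) and an auxiliary spatial density $\mu \in C_c^\infty(\frac{3}{4}B_i)$ satisfying $\int \mu = 1$, $\|\mu\|_\infty \apprle r_i^{-n}$, $\|\nabla\mu\|_\infty \apprle r_i^{-(n+1)}$. This reduces the problem to bounding $\fiint_{\frac{3}{4}Q_i}|\nabla\tuh|^q$ together with the time oscillation $\sup_{t_1,t_2 \in \frac{3}{4}I_i} |\avgs{\mu}{\tuh}(t_2) - \avgs{\mu}{\tuh}(t_1)|^q/r_i^q$. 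The gradient term is controlled by selecting $z^* \in 16Q_i \cap \elam$ via \descref{W3}; the definition of $g$ gives $\fiint_{\tilde Q}|\nabla\tu|^q \leq \la^q$ for every parabolic cylinder $\tilde Q \ni z^*$, and Lemma \ref{time_average} transfers this control to $\tuh$ to yield $\fiint_{\frac{3}{4}Q_i}|\nabla\tuh|^q \apprle \la^q$.

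For the time oscillation, I would apply Lemma \ref{lemma_crucial_2} with test functions $\phi = \mu\eta$ and $\varphi = \zeta$. The gradient contribution there is bounded by $\apprle r_i\la$ exactly as in the proof of Lemma \ref{lemma3.13}, giving a $\la$-term after division by $r_i$. The new $\Th_2$ contribution is the $\zeta'$-term
\[
\|\mu\eta\|_\infty \|\zeta'\|_\infty \iint_{\frac{3}{4}Q_i}|[u]_h|\,dz \apprle \frac{1}{r_i^n\,s}\iint_{\frac{3}{4}Q_i}|[u]_h|\,dz.
\]
I would bound $\iint|[u]_h|$ by passing to $\iint|u|$ on a slight enlargement via Lemma \ref{time_average} and then applying Theorem \ref{sobolev-poincare}: choose an enlarged ball $B^\sharp \supset \frac{3}{4}B_i$ of radius $R \sim \rho$ that meets $\Om^c$ with positive $q$-capacity, using that $(0,0) \in \partial\Om \times \RR$ together with the uniform $q$-thickness of $\Om^c$ (which follows from $q > p - \ve_0$ and Theorem \ref{self_improv_cap}). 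This yields $\fint_{B^\sharp}|u| \apprle \rho(\fint_{B^\sharp}|\nabla u|^q)^{1/q} \apprle \rho\la$ after invoking the maximal-function bound on a parabolic cylinder containing $B^\sharp \times I_i$. A volume-rescaling back to $\frac{3}{4}B_i$ then costs a factor $(\rho/r_i)^n$, giving $\iint_{\frac{3}{4}Q_i}|u| \apprle |\frac{3}{4}Q_i|\,(\rho/r_i)^n\rho\la$. Combining this with the $\zeta'$-prefactor $1/(r_i^n s)$, using $|\frac{3}{4}Q_i| \apprle r_i^{n+2}\ga$, dividing by $r_i$, and invoking the decisive $\Th_2$-inequality $\ga r_i^2 \apprge s$ (from \descref{W5} and the definition of $\Th_2$), the careful rearrangement of the factors $\rho$, $r_i$, $\ga$, $s$, $\la$ produces the bound $\la(\la^{2-p}/\al_0^{2-p})^{(n+1)/2}$. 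Raising to the $q$-th power and combining with the $\la^q$ from Step 2 yields the claim.

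The main obstacle is the $\zeta'$-term: (i) choosing $B^\sharp$ so that the capacity condition holds regardless of how far $\frac{3}{4}B_i$ sits from $\partial\Om$ while keeping $R \sim \rho$; (ii) converting the maximal-function bound, which is stated on parabolic cylinders, into a usable $L^q$-estimate for $\nabla u$ on the rectangle $B^\sharp \times I_i$; and (iii) the precise bookkeeping of $\rho$, $r_i$, $\ga$, $s$ that forces exactly the exponent $(n+1)/2$. Note that for $p > 2$ one has $\la^{2-p}/\al_0^{2-p} < 1$ and the correction is a modest perturbation, whereas for $p < 2$ the correction is genuinely present and essential for the subsequent Caccioppoli-type arguments in Section \ref{Caccioppoli_inequality}.
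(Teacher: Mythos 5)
Your proposal follows essentially the same route as the paper's proof: apply Lemma~\ref{lemma_crucial_1} with weight $\om_i$ and auxiliary density $\mu$, control the spatial-gradient piece by $\la^q$ via \descref{W3}, then invoke Lemma~\ref{lemma_crucial_2} with $\phi=\mu\eta$, $\varphi=\zeta$ and observe that the $\zeta'$-term (absent for $\Th_1$) is exactly what produces the factor $(\la^{2-p}/\al_0^{2-p})^{(n+1)/2}$, after a Sobolev--Poincar\'e step on a ball of radius $\sim\rho$ and the use of $\ga r_i^2\apprge s$. Your factor bookkeeping in the $\zeta'$-term does yield the claimed exponent $(n+1)/2$. (Incidentally, the obstacle you flag in (ii) is not really there: the strong maximal function runs over rectangles of arbitrary aspect ratio, so the $L^q$ bound on any box containing $z^*$ and $8Q$ is immediate.)

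The one genuine gap is that you do not treat the case $\tuh^i=0$, which occurs precisely when $\frac34 Q_i\not\subset\mcc_{8\rho}$, i.e.\ $\frac34 B_i\not\subset\Om\cap 8B$. In that situation $\frac34 B_i$ is not compactly contained in $\Om$, so Lemma~\ref{lemma_crucial_1} cannot be applied at all; your entire argument assumes $\tuh^i\neq 0$. Your item (i) points in a related direction, but frames it as a problem of choosing $B^\sharp$ rather than of the inapplicability of the parabolic Poincar\'e lemma. In the paper the case $\tuh^i=0$ is dispatched separately and easily: since $\tuh$ vanishes on $\frac34 B_i\cap(\Om\cap 8B)^c$, which has positive $q$-capacity by the $p$-thickness assumption (or by positive measure if $\frac34 B_i$ leaves $8B$), one applies Theorem~\ref{sobolev-poincare} on each time slice to get $\fiint_{\frac34 Q_i}|\tuh/r_i|^q\apprle\fiint_{\frac34 Q_i}|\nabla\tuh|^q\apprle\la^q$, which is stronger than the claimed bound. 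You should add this case split to make the argument complete.
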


\begin{proof}
 Since $i \in \Th_2$, we have $\ga r_i^2 \apprge s$. Let us split into cases $\tuh^i \neq 0$ and $\tuh^i = 0$:
 \begin{description}[leftmargin=*]
  \item[Case $\tuh^i \neq 0$:] From the construction in \eqref{lipschitz_extension}, we have  $\frac34Q_i \subset \mcc_{8\rho}$, which also implies the bound $r_i \leq 8\rho$. Now applying Lemma  \ref{lemma_crucial_1} with $\mu(x) \in C_c^{\infty} (\frac34B_i)$ satisfying $|\mu| \leq \frac{C(n)}{r_i^n}$ and $|\nabla \mu| \leq \frac{C(n)}{r_i^{n+1}}$,  we get
 \begin{equation}\label{lemma2.5.8.1}
  \begin{array}{rcl}
   \fiint_{\frac34Q_i} \left| \frac{\tuh(z) - \tuh^i}{r_i} \right|^q \ dz 
   & \apprle& \fiint_{\frac34Q_i} |\nabla \tuh|^q \ dz + \sup_{t_1,t_2 \in \frac34I_i} \left| \frac{\avgs{\mu}{\tuh}(t_1)-\avgs{\mu}{\tuh}(t_2)}{r_i} \right|^q .
  \end{array}
 \end{equation}
 The first term of \eqref{lemma2.5.8.1} can be estimated using \descref{W4}. To estimate the second term of \eqref{lemma2.5.8.1}, let us apply Lemma \ref{lemma_crucial_2} with $\phi(x) = \mu(x) \eta(x)$ and $\varphi(t) = \zeta(t)$, which combined with \eqref{cut_off_function}, gives for any $t_1,t_2 \in \frac34I_i$, the estimate
  \begin{equation}
  \label{lemma2.5.8}
  \begin{array}{ll}
  |\avgs{\mu}{\tuh}(t_2)- \avgs{\mu}{\tuh}(t_1)| 
   & \apprle \|\nabla (\eta \mu) \|_{L^{\infty}(\frac34B_i)}   \iint_{\frac34Q_i} [|\nabla u|^{p-1}+|\th|^{p-1}]_h\ dz  \\
   & \qquad +\  \|\mu\|_{L^{\infty}(\frac34B_i)}   \iint_{\frac34Q_i\cap 8Q} \frac{[u]_h}{s} \ dz \\
    &:= J_1 + J_2.
  \end{array}
 \end{equation}

  We can easily control $J_1$ of \eqref{lemma2.5.8} using \descref{W4} and  $r_i \leq 8\rho$ to get
  \begin{equation}
   \label{lemma2.5.9}
  J_1  \apprle \lbr \frac{1}{\rho r_i^n} + \frac{1}{ r_i^{n+1}}\rbr|Q_i| \la^{p-1}  \apprle r_i \la.
  \end{equation}

To control $J_2$ of  \eqref{lemma2.5.8}, we proceed as follows:
\begin{equation}
\label{lemma2.5.9.1}
  J_2 \overset{\text{\redlabel{3.50.a}{a}}}{\apprle}  \frac{\rho}{r_i^n s} |8Q|  \lbr \fiint_{8Q} \lbr \frac{[u]_h}{\rho}\rbr^q \ dz\rbr^{\frac1{q}} \overset{\text{\redlabel{3.50.b}{b}}}{\apprle} \frac{\rho^{n+1}}{r_i^n}   \lbr \fiint_{8Q} |\nabla [u]_h|^q \ dz\rbr^{\frac1{q}} \overset{\text{\redlabel{3.50.c}{c}}}{\apprle} 
   \lbr \frac{\ga}{\al_0^{2-p}}\rbr^{\frac{n+1}{2}} r_i \la.
\end{equation}
To obtain \redref{3.50.a}{a}, we made use of H\"older's inequality. To get \redref{3.50.b}{b}, we made use of Theorem \ref{sobolev-poincare}. Finally  to get \redref{3.50.c}{c}, we used \descref{W4}, $\ga r_i^2 \apprge s$ and \eqref{def_s}.

  Thus substituting \eqref{lemma2.5.8},  \eqref{lemma2.5.9} and \eqref{lemma2.5.9.1} into \eqref{lemma2.5.8.1}, the Lemma follows for case $\tuh^i \neq 0$.
  \item[Case $\tuh^i = 0$:]   In this case, we can directly apply Theorem \ref{sobolev-poincare} and make use of \descref{W4} to get 
 \begin{equation*}\label{lemma2.5.6}
  \begin{array}{ll}
   \fiint_{\frac34Q_i} \left| \frac{\tuh}{r_i} \right|^q \ dz \apprle \fiint_{\frac34Q_i} |\nabla \tuh|^q \apprle \la^q.
  \end{array}
 \end{equation*}
 \end{description}
 This completes the proof of the Lemma. 
 \end{proof}

\begin{lemma}
\label{lemma3.14}
 For any $0< \vartheta \leq q$, there exists a positive constant $C(n,p,q,\lamot,b_0, \vartheta)>0$ such that the following holds:
 \begin{equation*}
  \iint_{8B \times 2\tm \setminus \elam} |\pa_t \vlh(z)  (\vlh(z) - \tuh(z))|^{\vartheta} \ dz \leq C \la^{\vartheta p} |\RR^{n+1} \setminus \elam| + \frac{C}{s^{\vartheta}} \iint_{8Q} |\tuh(z)|^{2\vartheta} \ dz. 
 \end{equation*}

\end{lemma}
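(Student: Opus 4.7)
The plan is to cover $8B\times 2\tm \setminus \elam \subset \elam^c$ by the parabolic Whitney cylinders $\{\tfrac12 Q_i\}$ from Lemma~\ref{whitney_decomposition}. Any such cylinder meeting $8B\times 2\tm$ automatically satisfies $\tfrac34 Q_i \cap 2\mch \neq \emptyset$, so only indices $i \in \Th = \Th_1 \cup \Th_2$ contribute. I then decompose the integral as a sum over these indices and estimate the integrand pointwise on each $\tfrac34 Q_i$, handling the two subclasses separately.

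For $i \in \Th_1$, Lemma~\ref{lemma3.11}\eqref{lemma3.11.bound2} gives the pointwise bound $|\pa_t \vlh| \leq C\min\{r_i,\rho\}\la/(\ga r_i^2)$, while Lemma~\ref{lemma3.13} gives $\fiint_{\frac34 Q_i}|\vlh - \tuh|^q \leq C(r_i\la)^q$, which Jensen's inequality (using $\vartheta \leq q$) upgrades to $\fiint_{\frac34 Q_i}|\vlh - \tuh|^{\vartheta} \leq C(r_i\la)^{\vartheta}$. Multiplying these bounds, recalling $\ga = \la^{2-p}$ and $\min\{r_i,\rho\}/r_i \leq 1$, yields $\iint_{\frac34 Q_i} |\pa_t\vlh(\vlh-\tuh)|^{\vartheta}\,dz \leq C(\la^2/\ga)^{\vartheta}|Q_i| = C\la^{\vartheta p}|Q_i|$. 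Summing over $i \in \Th_1$ via the finite overlap property \descref{W6} produces the first term $C\la^{\vartheta p}|\RR^{n+1}\setminus\elam|$ on the right.

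For $i \in \Th_2$ the defining inequality $\ga r_i^2 \apprge s$ is the essential tool. Combining Lemma~\ref{lemma3.11}\eqref{lemma3.11.bound1} with this lower bound gives $|\pa_t \vlh| \leq (C/s)\fiint_{4Q_i}|\tuh|$, and Lemma~\ref{lemma3.10.1} furnishes the matching pointwise bound $|\vlh| \leq C\fiint_{4Q_i}|\tuh|$. Writing $|\vlh - \tuh| \leq |\vlh| + |\tuh|$, and using Cauchy--Schwarz $(\fiint_{4Q_i}|\tuh|)^2 \leq \fiint_{4Q_i}|\tuh|^2$ together with Young's inequality on the cross term $\fiint_{4Q_i}|\tuh|\cdot|\tuh|$, I obtain the pointwise estimate $|\pa_t\vlh||\vlh - \tuh| \leq (C/s)\lbr\fiint_{4Q_i}|\tuh|^2 + |\tuh|^2\rbr$. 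Raising to the $\vartheta$-th power, integrating over $\tfrac34 Q_i$, applying Jensen's inequality $(\fiint |\tuh|^2)^{\vartheta} \leq \fiint |\tuh|^{2\vartheta}$ (with an additional H\"older-on-the-sum argument when $\vartheta < 1$), and invoking finite overlap then yields the second term $(C/s^{\vartheta})\iint_{8Q}|\tuh|^{2\vartheta}\,dz$.

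The main obstacle is the $\Th_2$ case, since no analog of Lemma~\ref{lemma3.13} is available there and one cannot control $|\vlh - \tuh|$ directly by $r_i\la$. The remedy is to fall back on the averaged bounds $|\pa_t\vlh|, |\vlh| \apprle \fiint_{4Q_i}|\tuh|$ and to exploit $\ga r_i^2 \apprge s$ so that the prefactor $1/(\ga r_i^2)$ becomes $1/s$; the symmetrization via Cauchy--Schwarz and Young then extracts the quadratic $|\tuh|^2$ structure that matches the second term of the statement. A subsidiary technical point is the step $(\fiint|\tuh|^2)^{\vartheta} \leq \fiint|\tuh|^{2\vartheta}$, which is direct Jensen for $\vartheta \geq 1$ but for $\vartheta < 1$ requires a H\"older argument applied to the sum $\sum_{i \in \Th_2}|Q_i|(\fiint_{4Q_i}|\tuh|^2)^{\vartheta}$ against $\sum_i |Q_i|$, with the resulting constant absorbed into $C = C(n,p,q,\lamot,b_0,\vartheta)$.
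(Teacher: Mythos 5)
Your proof follows essentially the same route as the paper's: the same Whitney split into $\Th_1$ and $\Th_2$, the same invocations of Lemma \ref{lemma3.11}\eqref{lemma3.11.bound2} with Lemma \ref{lemma3.13} for $\Th_1$, of Lemma \ref{lemma3.11}\eqref{lemma3.11.bound1} with Lemma \ref{lemma3.10.1} and $\ga r_i^2 \apprge s$ for $\Th_2$, and the same finite-overlap summation. The only difference is cosmetic bookkeeping in the $\Th_2$ case, where you symmetrize with Cauchy--Schwarz and Young before raising to the $\vartheta$-th power, whereas the paper keeps the factors $(\fiint_{4Q_i}|\tuh|)^\vartheta$ and $(\iint_{\frac34 Q_i}(|\vlh|+|\tuh|))^\vartheta$ separate. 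One small caveat: both treatments ultimately rest on $(\fiint_{4Q_i}|\tuh|)^{2\vartheta} \apprle \fiint_{4Q_i}|\tuh|^{2\vartheta}$, which requires $2\vartheta\geq 1$; your suggested H\"older-on-the-sum patch for $\vartheta<1/2$ only yields $|\elam^c|^{1-\vartheta}\lbr\iint_{8Q}|\tuh|^2\rbr^{\vartheta}$ rather than $\iint_{8Q}|\tuh|^{2\vartheta}$, but the paper's own proof carries the same implicit restriction and the lemma is applied only with $\vartheta=1$.
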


\begin{proof}
From \descref{W3} and \eqref{theta}, we see that 
 $8B \times 2\tm \setminus \elam\subset \sum_{i \in \Th} 4Q_i$. Thus to prove the lemma, it will be sufficient to  consider the two sub-cases, one where $i \in \Th_1$ and the other where $i \in \Th_2$. Thus, for a given $i \in \Th$, let us define the following: 
 \begin{equation*}
  J_i:= \iint_{\frac34Q_i} |\pa_t \vlh(z)  (\vlh(z) - \tuh(z))|^{\vartheta} \ dz .
 \end{equation*}

 \begin{description}[leftmargin=*]
  \item[Case $i\in \Th_1$:] Making use of  \eqref{lemma3.11.bound2} and H\"older's inequality applied to \eqref{3.59} (recall $\ga = \la^{2-p}$), we get
  \begin{equation}
  \label{3.66}
J_i  \apprle \lbr \frac{1}{\ga r_i^2} r_i \la \rbr^{\vartheta} \iint_{\frac34Q_i} |\vlh(z) - \tuh(z)|^{\vartheta} \ dz 
 \apprle \la^{\vartheta p} \ |\frac34Q_i|.
  \end{equation}
  \item[Case $i\in \Th_2$:] In this case, we obtain the following sequence of estimates: 
  \begin{equation}
  \label{3.67}
   \begin{array}{ll}
    J_i & \overset{\redlabel{3.52.a}{a}}{\apprle} \lbr \frac{1}{\ga r_i^2} \rbr^{\vartheta} \lbr \fiint_{4Q_i} |\tuh(z)| \ dz \rbr^{\vartheta} \lbr \iint_{\frac34Q_i} |\vlh(z)| + |\tuh(z)| \ dz \rbr^{\vartheta} \\
    & \overset{{\redlabel{3.52.b}{b}}}{\apprle} \lbr \frac{1}{\ga r_i^2} \rbr^{\vartheta} \lbr \fiint_{4Q_i} |\tuh(z)| \ dz \rbr^{\vartheta} \lbr |\frac34Q_i|\fiint_{4Q_i} |\tuh(z)| \ dz + \iint_{4Q_i}|\tuh(z)| \ dz \rbr^{\vartheta} \\
    & \overset{{\redlabel{3.52.c}{c}}}{\apprle} \frac{1}{s^{\vartheta}}  \lbr \iint_{4Q_i} |\tuh(z)|^{2\vartheta} \ dz \rbr.
   \end{array}
  \end{equation}
  To get \redref{3.52.a}{a}, we made use of \eqref{lemma3.11.bound1}. To obtain \redref{3.52.b}{b} above, we  made use of \eqref{lemma3.10_bound1}. Finally to get \redref{3.52.c}{c}, we have made use of the fact that $i\in\Th_2$, which implies $\ga r_i^2 \apprge s$. 

 \end{description}

 Combining \eqref{3.66} and \eqref{3.67} followed by summing over all $i \in \Th$ and  using  \descref{W7}, we get 
 \begin{equation*}
  \begin{array}{ll}
   \iint_{8B \times 2\tm \setminus \elam} |\pa_t \vlh(z)  (\vlh(z) - \tuh(z))|^{\vartheta} \ dz 
   & \apprle \sum_{i \in \Th_1} J_i + \sum_{i \in \Th_2} J_i \\
   & \overset{}{\apprle} \la^{\vartheta p} |\RR^{n+1} \setminus \elam| + \frac{1}{s^{\vartheta}} \iint_{8Q} |\tuh(z)|^{2\vartheta} \ dz. 
  \end{array}
 \end{equation*}
This completes the proof of the lemma. 
\end{proof}

We shall now prove the Lipschitz continuity of $\vlh$ on $2\mch = \RR^n \times (-4s, 4s)$. {\color{black}The method of making use of Lemma \ref{metric_lipschitz} to prove Lemma \ref{lemma3.15} is due to \cite[Chapter 3]{bogelein2013regularity}.}	
\begin{lemma}
\label{lemma3.15}
The extension $\vlh$ from  \eqref{lipschitz_extension} is $C^{0,1}(2\mch)$ with respect to the parabolic metric $d_{\la}$. 
\end{lemma}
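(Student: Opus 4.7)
The strategy is to invoke Lemma \ref{metric_lipschitz} with $\mcZ = 2\mch$: it suffices to show that for every $z_0 \in 2\mch$ and $r > 0$ there is a constant $c = c(z_0, r)$ with
\begin{equation*}
\frac{1}{|Q_r(z_0) \cap 2\mch|} \iint_{Q_r(z_0) \cap 2\mch} \frac{|\vlh(y) - c|}{r} \, dy \leq C \la,
\end{equation*}
where $Q_r(z_0) = Q_{r, \ga r^2}(z_0)$ is the $d_\la$-ball of radius $r$ and $C = C(n,p,q,\lamot,b_0)$. I will refer to this as the target estimate and argue by cases depending on the position of $Q_r(z_0)$ relative to the good set $\elam$ and the Whitney cylinders of $\elam^c$.

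In the first case, $Q_r(z_0) \subset \elam$. Since $\om_j$ is supported in $\tfrac34 Q_j \subset \elam^c$ (by \descref{W3}), we have $\om_j \equiv 0$ on $\elam$ for every $j$, and hence $\vlh = \tuh$ on $Q_r(z_0)$ by \eqref{lipschitz_extension}. The target then follows from the parabolic Poincar\'e estimate of Lemma \ref{lemma_crucial_1} applied to $\tuh$: the spatial-gradient term is bounded by $\la^q$ via the defining inequality $\mm(|\nabla \tu|^q) \leq \la^q$ on $\elam$, while the time-oscillation correction is reduced via Lemma \ref{lemma_crucial_2} (with bump $\phi = \mu\eta$ and cutoff $\varphi = \zeta$) to an $L^1$-average of $[|\nabla u|^{p-1} + |\th|^{p-1}]_h$. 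This is again bounded by $\la^{p-1}$ through the maximal-function bound in the definition of $g$, and the prefactor $\ga^{-1} = \la^{p-2}$ combines to give the required $C r \la$.

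In the second case, $Q_r(z_0) \cap \elam^c \neq \emptyset$. I further distinguish sub-cases based on the size of $r$. If $r \gtrsim \rho$, the uniform bound $|\vlh| \leq C\rho\la$ of Lemma \ref{lemma3.7} (together with the analogous bound $|\tuh| \leq C\rho\la$ on $\elam$ coming from Theorem \ref{sobolev-poincare} applied to $\tuh$, whose spatial trace vanishes on the $p$-thick set $\Om^c$) immediately gives the target with $c = 0$. If $r$ is small compared to the Whitney scale $r_i$ at some $\hat z \in Q_r(z_0) \cap \elam^c$, so that $Q_r(z_0)$ is contained in the enlarged cell $\tfrac34 Q_i$, then the pointwise estimates of Lemmas \ref{lemma3.9}, \ref{lemma3.10.2}, \ref{lemma3.10.3}, and \ref{lemma3.11} yield $|\nabla \vlh| \leq C\la$ and $|\pa_t \vlh| \leq C\la (\ga r_i)^{-1}$ on that cell, so integration along parabolic paths gives oscillation $\leq Cr\la$.

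The main obstacle is the intermediate regime where $Q_r(z_0)$ straddles both $\elam$ and many Whitney cylinders of widely varying scales. Here I take $c$ to be the average of $\vlh$ over $Q_r(z_0) \cap \elam$ (nonempty since by \descref{W2} the distance from any Whitney center $z_i$ to $\elam$ is exactly $16 r_i$, so once $r$ exceeds a fixed multiple of the relevant $r_i$ the enlarged cylinder necessarily reaches $\elam$). The portion $Q_r(z_0) \cap \elam$ is handled by the first-case argument applied with this reference value; for the complementary portion $Q_r(z_0) \cap \elam^c = Q_r(z_0) \cap \bigcup_j \tfrac12 Q_j$, I use the per-cell oscillation bounds of Lemma \ref{lemma3.13} and Lemma \ref{lemma2.5}, with each $|\tuh^j - c|$ controlled via a short chain of comparisons through neighbouring cells using Lemma \ref{lemma3.8}; the chain length in the $d_\la$-metric is at most $Cr$, producing the required $Cr\la$ bound after summing with the finite-overlap property \descref{W6}. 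Once the target estimate is verified in every case, Lemma \ref{metric_lipschitz} delivers $\vlh \in C^{0,1}(2\mch)$ with respect to $d_\la$, with Lipschitz constant $\leq C\la$.
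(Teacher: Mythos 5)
Your overall strategy — invoking Lemma \ref{metric_lipschitz} and splitting into cases by the position of the test cylinder relative to $\elam$ and the Whitney cover — is the same as the paper's. However, there are several concrete gaps in the case analysis, and the claimed uniform bound $C\la$ is stronger than what the sketch actually delivers.

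First, the target estimate. You aim for $\leq C\la$, but the intermediate bounds you invoke do not give this. Lemma \ref{lemma2.5}, which you correctly identify as the per-cell oscillation estimate for $\Theta_2$ cells, carries the extra factor $\lbr \frac{\la^{2-p}}{\al_0^{2-p}} \rbr^{(n+1)/2}$, which is not $O(1)$ uniformly in $\la$. The paper's proof acknowledges this and consequently only proves the Campanato estimate $I_r(z) \leq o(1)$, i.e.\ some bound \emph{independent of $z$ and $r$ but possibly dependent on $\la$}; the improved bound $C\la$ is relegated to a remark and requires a separate argument (e.g.\ the method of \cite{KL}, now usable once continuity is known). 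Weakening your target to mere boundedness is harmless for the statement of Lemma \ref{lemma3.15}, but the write-up as given would not close.

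Second, your pointwise gradient bound in the ``small $r$'' sub-case is incorrect. You assert that Lemmas \ref{lemma3.9}, \ref{lemma3.10.2}, \ref{lemma3.10.3}, \ref{lemma3.11} yield $|\nabla\vlh| \leq C\la$ on the enclosing cell. That is true for $i \in \Th_1$ and for $i \in \Th_2$ with $\rho \leq r_i$ (Lemma \ref{lemma3.9}), but for $i \in \Th_2$ with $r_i < \rho$ the relevant estimate is \eqref{lemma3.10_bound6}, which gives $|\nabla\vlh| \apprle \la + \frac{\la^{1-p}}{s}\fiint_{4Q_i}|\tuh|^2$; the second term is not controlled by $\la$. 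The paper handles this sub-case by retreating to a crude bound involving $\|\tuh\|_{L^1(\RR^{n+1})}$, which is finite but not of the form $C\la$.

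Third, the chaining argument in the intermediate regime is underdetermined. Lemma \ref{lemma3.8} controls $|\tuh^i - \tuh^j|$ only for $i \in \Th_1$ and $j \in A_i$; it does not apply when the chain passes through cells in $\Th_2$. Your one-sentence appeal to a ``short chain of comparisons through neighbouring cells using Lemma \ref{lemma3.8}'' does not explain how such cells are crossed. The paper avoids chaining entirely and instead compares $\vlh$ to $\tuh$ on each cell via Lemmas \ref{lemma3.13} (for $\Th_1$) and \ref{lemma2.5} (for $\Th_2$), then compares $\tuh$ to its average using Lemma \ref{lemma_crucial_1} and Lemma \ref{lemma_crucial_2}.

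Finally, your case split misses the large-time-scale regime. When $2\tQ$ escapes $4\mch$, or has $\ga r^2 > 4s$ while $2\tQ \subset 4\mch$, the paper's final case uses $\ga r^2 \apprge s$ together with Lemma \ref{lemma3.12} and the global $L^1$-finiteness of $\tuh$. For $p < 2$, the inequality $\ga r^2 \apprge s$ does \emph{not} force $r \apprge \rho$ (since $\ga = \la^{2-p}$ and $s = \al_0^{2-p}\rho^2$ with $\la > c_e\al_0$, and the exponent $2-p$ is positive), so this regime is not subsumed by your ``$r \gtrsim \rho$'' case. As written, the proposal would fail to cover this configuration.
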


\begin{proof} 
Let us consider a parabolic cylinder $\tQ_{r}(z) := \tQ_{r, \ga r^2} (z) := \tQ$ for some $z \in 2\mch$ and $r>0$. To prove the Lemma, we make use of Lemma \ref{metric_lipschitz} to bound
 \begin{equation}
 \label{bound_I_r}
I_r(z) :=   \fiint_{\tQ \cap 2\mch} \left|\frac{\vlh(\tz) - \avgsnoleft{\tQ\cap2\mch}{\vlh}}{r}\right| \ d\tz \leq {o}(1),
 \end{equation}
where ${o}(1)$ denotes a bound independent of $z \in 2\mch$ and $r>0$ only. 
%
%
%
%
In order to do this, we split the proof into three cases given by
%
\begin{gather*}
 (i)\ 2\tQ \subset 4\mch \setminus \elam, \quad (ii) \ 2\tQ \subset 4\mch \quad  \text{with} \ \ga r^2 \leq 4s, \\
(iii)\ 2\tQ \subset 4\mch \quad \text{with}\  \ga r^2 > 4s  \quad \text{\bf OR}  \quad 2\tQ \cap (4\mch)^c \neq \emptyset. 
\end{gather*}

\begin{description}[leftmargin=*]
 \item[Case $2\tQ \subset 4\mch \setminus \elam$:]  In this case, we observe the following lower bound must hold:
 \begin{equation}
  \label{3.70}
  |\tQ \cap 2\mch| \apprge |\tQ| \approx r^{n+2} \ga. 
 \end{equation}
 From the construction of $\vlh$ in \eqref{lipschitz_extension}, we see that  $\vlh$ is smooth on $\elam^c$.

 If $2\tQ \subset  \mcc_{8\rho}$, then setting $\tQ = \tB_r \times \tilde{I}_{\ga r^2}$ and  using the definition of derivatives, we get
  \begin{equation}
   \label{3.71}
   \def\arraystretch{2.2}
I_r(z)  \apprle   \frac{1}{r} \fiint_{\tQ \cap 2\mch} \fiint_{\tQ \cap 2\mch}  |\vlh(\tz) - \vlh(\tz_1)| \ d\tz \ d\tz_1
 \apprle  \sup_{\tz \in \tQ \cap 2\mch}\lbr  |\nabla\vlh(\tz)| + r |\pa_t \vlh(\tz)|\rbr .
  \end{equation}
  
  On the other hand, if $2\tQ \cap \mcc_{8\rho}^c \neq \emptyset$, then applying Theorem \ref{sobolev-poincare} on each times slice, we obtain
  \begin{equation}
   \label{3.72}
    I_r(z)  \apprle \fiint_{\tQ \cap 2\mch} \left|\frac{\vlh(\tz)}{r}\right| \ d\tz \apprle \fiint_{\tQ \cap 2\mch} \left| \nabla \vlh(\tz)\right| \ d\tz
    \apprle \sup_{\tz \in \tQ\cap 2\mch} |\nabla \vlh(\tz)|.
  \end{equation}

%
%
%

From \eqref{3.71} and \eqref{3.72}, we see that in order to  bound $I_r(z)$ independent of $z$ and $r$, we have to bound the following expression independent of $z$ and $r$:
 \begin{equation}\label{3.72.1}\sup_{\tz \in \tQ \cap 2\mch} \lbr |\nabla\vlh(\tz)| + r |\pa_t \vlh(\tz)|\rbr.\end{equation}
 
To this end, let  $\tz \in \tQ \cap 2\mch$, which implies  $\tz \in \frac34Q_i$ for some $i \in \NN$. Since $\tQ \subset \elam^c$, we also get
\begin{equation}
 \label{3.74}
 r \leq \pard (\tz, \elam) \leq \pard (\tz, z_i) + \pard (z_i , \elam) \leq r_i + 16r_i = 17r_i. 
\end{equation}

In order to bound \eqref{3.72.1}, we consider the following three sub-cases:
\begin{description}
\item[Subcase $i \in \Th_1$:] Using \eqref{3.34} and  \eqref{lemma3.11.bound2} along with  \eqref{3.74}, we get 
\begin{equation*}
\label{3.76}
|\nabla \vlh(\tz)| + r |\pa_t \vlh(\tz)| \apprle \la + r \frac{1}{\ga r_i^2} \min\{ r_i,\rho\} \la \apprle \la + \la^{p-1} \leq o(1). 
\end{equation*}


\item[Subcase $i \in \Th_2$ and $\rho \leq r_i$:] In this case, we make use of \eqref{3.34},  \eqref{lemma3.11.bound1} and \eqref{3.74} to get
\begin{equation*}
\label{3.77}
\begin{array}{ll}
|\nabla \vlh(\tz)| + r |\pa_t \vlh(\tz)| & \apprle \la + r \frac{1}{\ga r_i^2} \frac{1}{|Q_i|} \iint_{4Q_i} |\tuh(y,s)| \ dy\ ds \\
& \overset{}{\apprle } \la +  \frac{1}{\ga^2 \rho^{n+3}} \| \tuh\|_{L^1(\RR^{n+1})} \leq o(1).
\end{array}
\end{equation*}

\item[Subcase $i \in \Th_2$ and $\rho \geq r_i$:]  In this case, we use the fact $\ga r_i^2 \apprge s$ along with \eqref{lemma3.10_bound2} and \eqref{lemma3.11.bound3}  to obtain
\begin{equation*}
 \label{3.79}
 \begin{array}{ll}
  |\nabla \vlh(\tz)| + r |\pa_t \vlh(\tz)| & \apprle \frac{1}{r_i|Q_i|} \iint_{4Q_i} |\tuh(y,s)| \ dy\ ds + r \frac{\rho \la}{s} \\
  & \overset{}{\apprle } \frac{1}{\lbr \frac{s}{\ga}\rbr^{\frac{n+1}{2}} s} \| \tuh\|_{L^1(\RR^{n+1})} + \frac{\rho^2 \la}{s} \leq o(1).
 \end{array}
\end{equation*}
\end{description}

 We have shown that $I_r(z) \leq o(1)$ when $2\tQ \subset 4\mch \setminus \elam$. 

 \item[Case $2\tQ \subset 4\mch$ and $\ga r^2 \leq 4s$:] If $2\tQ \subset \elam^c$, then this is just the previous case.  Hence without loss of generality, we can assume 
 \begin{equation}\label{second_case}
 2\tQ \cap \elam \neq \emptyset. 
  \end{equation}
  Noting that \eqref{3.70} must also hold in this case, we apply 
%
  triangle inequality and estimate $I_r(z)$ by 
 \begin{equation}
  \label{3.81}
  \begin{array}{ll}
   I_r(z) 
   & \leq \fiint_{\tQ \cap 2\mch} \left| \frac{\vlh(\tz) - \tuh(\tz)}{r}\right|+ \left| \frac{\tuh(\tz) - \avgs{\tQ\cap 2\mch}{\tuh}}{r}\right| + \left| \frac{\avgs{\tQ\cap 2\mch}{\tuh} - \avgsnoleft{\tQ \cap 2\mch}{\vlh}}{r}\right| \ d\tz \\
   & \leq 2J_1 + J_2,
  \end{array}
 \end{equation}
 where we have set
 \begin{gather}
  J_1:= \fiint_{\tQ \cap 2\mch} \left| \frac{\vlh(\tz) - \tuh(\tz)}{r}\right| \ dz  \txt{and} J_2 := \fiint_{\tQ \cap 2\mch} \left| \frac{\tuh(\tz) - \avgs{\tQ\cap 2\mch}{\tuh}}{r}\right|\ dz.\label{def_J_1_2}
 \end{gather}
We now estimate each of the terms of \eqref{def_J_1_2} as follows:
%
\begin{description}[leftmargin=*]
\item[Estimate for $J_1$:] If $\tQ \subset \elam$, then $\vlh = \tuh$ which implies $J_1=0$. Hence without loss of generality, along with \eqref{second_case}, we can further assume $\tQ \cap \elam^c \neq \emptyset$. 
Using the construction of \eqref{lipschitz_extension}, we get
\begin{equation}
 \label{3.81.1}
 \begin{array}{ll}
  J_1 
  & \apprle \sum_{i\in \Th} \frac{1}{|\tQ\cap2\mch|} \iint_{\tQ \cap2\mch\cap \frac34Q_i} \left| \frac{\tuh(\tz) - \tuh^i}{r}\right| \ d\tz.
 \end{array}
\end{equation}

Let us fix an $i \in \Th$ and take two points $\tz_1 \in \tQ \cap \frac34Q_i$ and $\tz_2 \in \elam \cap 2\tQ$. Making use of  \descref{W2} along with the trivial bound $\pard (\tz_1, \tz_2) \leq  4r$ and $\pard (z_i, \tz_1) \leq 2r_i$,  we get
\begin{equation}
\label{3.83.1}
16r_i =\pard(z_i,\elam) \leq \pard (z_i, \tz_1) + \pard (\tz_1, \tz_2) \leq 2r_i + 4r  \ \Longrightarrow \ 2r_i \leq r,
\end{equation}
where $z_i$ denotes the centre of $\frac34Q_i$ as in \descref{W1}. 

Note that \eqref{3.70} holds and thus summing over all $i \in \Th$ such that  $\tQ \cap2\mch\cap \frac34Q_i \neq \emptyset$ in \eqref{3.81.1} and making use of  \eqref{3.83.1}, we get
\begin{equation*}
 \label{3.84.1}
 J_1 \apprle \sum_{\substack{i\in\Th \\ \tQ \cap2\mch\cap \frac34Q_i \neq \emptyset }} \frac{|\frac34Q_i|}{|\tQ\cap2\mch|} \fiint_{\frac34Q_i} \left| \frac{\tuh(\tz) - \tuh^i}{r}\right| \ d\tz \apprle \sum_{i\in \Th}  \fiint_{\frac34Q_i} \left| \frac{\tuh(\tz) - \tuh^i}{r_i}\right| \ d\tz.
\end{equation*}
To bound \eqref{3.84.1},  in the case $i \in \Th_2$, we apply H\"older's inequality followed by Lemma \ref{lemma2.5} and in the case $i \in \Th_1$, we again apply H\"older's inequality and use \eqref{3.61}. Thus we get
\begin{equation}
\label{3.84.2}
 J_1 \apprle \la \lbr 1  + \lbr \frac{\la^{2-p}}{\al_0^{2-p}}\rbr^{\frac{n+1}{2}}\rbr.
\end{equation}

 \item[Estimate for $J_2$:] Note that \eqref{3.70} is valid, which we use along with triangle inequality to get
 \begin{equation}
 \label{3.82}
  \begin{array}{ll}
   J_2 
   & \apprle \fiint_{\tQ \cap 2\mch}  \left| \frac{\tuh(\tz) - \avgs{\tQ}{\tuh}}{r}\right| +  \left| \frac{\avgs{\tQ}{\tuh} - \avgs{\tQ\cap 2\mch}{\tuh}}{r}\right| \ d\tz 
    \apprle 2 \fiint_{\tQ}  \left| \frac{\tuh(\tz) - \avgs{\tQ}{\tuh}}{r}\right| \ d\tz.
  \end{array}
 \end{equation}
  If  $\tQ \subset \mcc_{8\rho}$, then we estimate \eqref{3.82} by first applying Lemma \ref{lemma_crucial_1} for some  $\mu \in C_c^{\infty} (\tB)$ with $|\mu| \leq \frac{C(n)}{\tr^{n}}$ and  $|\nabla \mu| \leq \frac{C(n)}{\tr^{n+1}}$ to get
 \begin{equation*}
  \label{3.83}
  \begin{array}{ll}
   J_2 
   & \apprle \lbr \fiint_{\tQ} |\nabla \tuh(\tz)|^q \ d\tz + \sup_{t_1,t_2 \in \tilde{I}}\left|\frac{\avgs{\mu}{\tuh}(t_2)-\avgs{\mu}{\tuh}(t_1)}{r} \right|^q\rbr^{\frac{1}{q}}.
  \end{array}
 \end{equation*}
 
 Since we have $\tQ \subset \mcc_{8\rho}$, we have  $\tr \leq 8\rho$. Furthermore, since  $\tQ \subset 4\mch$, we see that  $\zeta(t) \equiv 1$ on $\tQ$. Thus,  we can apply Lemma \ref{lemma_crucial_2} and make use of the hypothesis \eqref{second_case} and mimic the procedure in the proof of Lemma \ref{lemma3.8} to obtain the bound 
%
%
%
 \begin{equation}
 \label{3.88}
  J_2 \apprle \la.
 \end{equation}

 On the other hand,  if $\tQ \nsubseteq \mcc_{8\rho}$, then we can directly apply Theorem \ref{sobolev-poincare} and use  \eqref{3.82},  to get
 \begin{equation}
 \label{3.89}
  \begin{array}{ll}
   J_2 & \apprle \fiint_{\tQ} \left| \frac{\tuh(\tz)}{r}\right| \ d\tz \apprle \lbr \fiint_{\tQ} |\nabla \tuh(\tz)|^q \ d\tz \rbr^{\frac{1}{q}} \apprle \la. 
  \end{array}
 \end{equation}
 
\end{description}

Thus substituting either \eqref{3.88} or \eqref{3.89} along with  \eqref{3.84.2} into \eqref{3.81}, we get  $I_r(z) \leq o(1)$ in the case that $2\tQ \subset 4\mch$ and $\ga r^2 \leq 4s$. 
 
 \item[Final case $2\tQ \subset 4\mch$ and $\ga r^2 > 4s$ OR $2\tQ \cap (4\mch)^c \neq \emptyset$:] In either case, we have $\ga r^2 \apprge s$, which gives the estimate
 \begin{equation*}
  \label{3.91}
  \begin{array}{ll}
  \fiint_{\tQ \cap 2\mch}  \left| \frac{\vlh(\tz) - \avgsnoleft{\tQ \cap 2\mch}{\vlh}}{r} \right|\ d\tz & \apprle \frac{1}{r^{n+3}} \iint_{\tQ \cap 2\mch} |\vlh(\tz)| \ d\tz \\
  & \apprle \frac{1}{r^{n+3}} \iint_{8Q \cap \elam} |\vlh(\tz)| \ d\tz + \frac{1}{r^{n+3}} \iint_{8Q\setminus \elam} |\vlh(\tz)| \ d\tz.
  \end{array}
 \end{equation*}

 By construction of $\vlh$ in \eqref{lipschitz_extension}, we have $\vlh = \tuh$ on $\elam$.  On $8Q \setminus \elam$, we can apply Lemma \ref{lemma3.12} (with $\vartheta=1$) to obtain the following bound:
 \begin{equation*}
  \begin{array}{ll}
   \fiint_{\tQ \cap 2\mch}  \left| \frac{\vlh(\tz) - \avgsnoleft{\tQ \cap 2\mch}{\vlh}}{r} \right|\ d\tz & \apprle \lbr \frac{\ga}{s} \rbr^{\frac{n+3}{2}} \iint_{8Q \cap \elam} |\tuh(\tz)| \ d\tz +  \lbr \frac{\ga}{s} \rbr^{\frac{n+3}{2}} \iint_{8Q \setminus \elam} |\tuh(\tz)| \ d\tz \\
   & \apprle o(1).
  \end{array}
 \end{equation*}
\end{description}
This completes the proof of the lemma. 
%
%
\end{proof}

\begin{remark}
In the proof of Lemma \ref{lemma3.15}, we can actually show the Lipschitz constant of $\vlh$ on $2\mch$  with respect to the parabolic metric \eqref{parabolic_metric} is $C_{(n,p,q,\lamot,b_0)}\la$. This improved bound will not be needed for the purposes of this paper, but it is worthwhile to mention how to obtain such an improvement. 

 In order to show this improvement, we can either use sharper estimates in the proof of Lemma \ref{lemma3.15} or mimic the proof from \cite{KL}. 
%
%
 
 Since we only take Steklov average in the time variable,  a priori we don't have continuity of $\vlh$ in the space variable to be able to implement the argument of \cite{KL} directly. On the other hand, once we have Lemma \ref{lemma3.15}, we know that $\vlh$ is continuous on $2\mch$, which enables us to implement the arguments of \cite{KL} and hence obtain the improved bound.

\end{remark}

\begin{lemma}
 \label{lemma3.18}
 For any $i \in \Th$ and $k \in \{0,1\}$, there exists a positive  constant $C{(n,p,q,\lamot,b_0)}$ such that there holds:
 \begin{equation}
 \label{lemma3.18.bound_1}
  \iint_{\frac34Q_i} [|\nabla u|^{p-1}+|\th|^{p-1}]_h |\nabla^k \vlh| \ dz \leq C \rho^{1-k} \lbr \la^p |4Q_i| + \frac{\lsb{\chi}{i \in \Th_2}}{s} \iint_{4Q_i} |\tuh|^2 \ dz \rbr. 
 \end{equation}
Here we have used the notation  $\lsb{\chi}{i\in \Th_2} = 1$ if $i \in \Th_2$ and $\lsb{\chi}{i\in \Th_2} = 0$ if $i \in \Th_1$ and $\nabla^0 \vlh := \vlh$. 
\end{lemma}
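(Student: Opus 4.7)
The plan has three components. First, I would establish an average bound for the Steklov-averaged factor,
\[
\fiint_{\frac34 Q_i}[|\nabla u|^{p-1}+|\th|^{p-1}]_h\, dz \leq C \la^{p-1}.
\]
Apply Lemma \ref{time_average}(ii) to move the Steklov average off the integrand at the cost of a slightly enlarged cylinder $\tQ_i \subset 2Q_i$, then use H\"older's inequality, which is legitimate since $q>p-\ve_0>p-1$, to reduce the task to bounding $\bigl(\fiint_{\tQ_i}(|\nabla u|^q+|\th|^q)\,dz\bigr)^{(p-1)/q}$. By \descref{W3} there is a point $\bar z\in 16 Q_i\cap\elam$; at such a point the definition \eqref{def_g} of $g$ forces both $\mm((|\nabla u|+|\th|)^q\chi_{8Q})(\bar z)\leq\la^q$ and $\mm(|\nabla\tu|^q)(\bar z)\leq \la^q$. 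Since $\bar z\in 16 Q_i$ and $\tQ_i\subset 2 Q_i$ lie inside a common parabolic cylinder centered at $\bar z$ of measure comparable to $|Q_i|$, the maximal function bound gives $\fiint_{\tQ_i}(|\nabla u|^q+|\th|^q)\,dz\leq C\la^q$, completing the Steklov-average bound.

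Next, I would invoke the pointwise $L^\infty$ estimate on $|\nabla^k\vlh|$ over $\frac34 Q_i$ from the preceding lemmas. When $k=0$, Lemma \ref{lemma3.7} yields $|\vlh|\leq C\rho\la$ on all of $\elam^c$ and in particular on $\frac34 Q_i$. When $k=1$ and $i\in\Theta_1$, Lemma \ref{lemma3.9} gives $|\nabla\vlh|\leq C\la$. When $k=1$ and $i\in\Theta_2$, Lemma \ref{lemma3.10.3} via estimate \eqref{lemma3.10_bound6} yields
\[
|\nabla\vlh|\leq C\la + \frac{C\la^{1-p}}{s}\fiint_{4Q_i}|\tuh|^2\,dz,
\]
and it is precisely this second term that accounts for the $\chi_{i\in\Theta_2}$ contribution in \eqref{lemma3.18.bound_1}.

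Combining these, I would pull the $L^\infty$ bound out of the integral on $\frac34 Q_i$ and multiply by the Steklov-average bound times $|\frac34 Q_i|\leq C|4Q_i|$. This yields the three subcases $C\rho\la^p|4Q_i|$ (for $k=0$), $C\la^p|4Q_i|$ (for $k=1$ and $i\in\Theta_1$), and $C\la^p|4Q_i|+Cs^{-1}\iint_{4Q_i}|\tuh|^2\,dz$ (for $k=1$ and $i\in\Theta_2$), each dominated by the right-hand side of \eqref{lemma3.18.bound_1} upon noting that $\rho^{1-k}$ equals $\rho$ for $k=0$ and $1$ for $k=1$. The main technical obstacle I anticipate is in Step~1: the bookkeeping among the Steklov averaging in time, the spatial cutoff $\eta$ and time cutoff $\zeta$ defining $\tu$, and the support restriction $\chi_{8Q}$ in \eqref{def_g} must be handled carefully so as to transfer the maximal-function control at the single point $\bar z$ into a genuine $L^q$-average bound on the enlarged cylinder $\tQ_i$.
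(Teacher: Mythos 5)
Your proposal is correct and follows essentially the same approach as the paper: multiply the $L^\infty$ bounds on $|\nabla^k\vlh|$ over $\frac34Q_i$ (Lemma~\ref{lemma3.7} for $k=0$; Lemma~\ref{lemma3.9} or estimate~\eqref{lemma3.10_bound6} for $k=1$) by the average bound $\fiint_{\frac34Q_i}[|\nabla u|^{p-1}+|\th|^{p-1}]_h\,dz\apprle\la^{p-1}$ that comes from \descref{W4} together with Lemma~\ref{time_average}. Your case split (first by $k$, then by $\Th_1$ versus $\Th_2$ when $k=1$) differs only cosmetically from the paper's split (by the relative sizes of $r_i$ and $\rho$), and your Step~1 simply makes explicit the maximal-function argument behind \descref{W4} that the paper invokes without spelling it out.
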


\begin{proof}
 We shall split the proof into two cases: 
 \begin{description}[leftmargin=*]
  \item[Case $i\in \Th_1$ or $i\in \Th_2$ with $\rho \leq r_i$:] In this case, we make use of Lemma \ref{lemma3.7} when $k=0$ or Lemma \ref{lemma3.9} when $k=1$,  along with \descref{W4}, to get
  \begin{equation*}
   \label{lemma3.18.1}
   \begin{array}{ll}
    \iint_{\frac34Q_i}  [|\nabla u|^{p-1}+|\th|^{p-1}]_h |\nabla^k \vlh| \ dz & \apprle \rho^{1-k} \la \iint_{\frac34Q_i}  [|\nabla u|^{p-1}+|\th|^{p-1}]_h  \ dz\\
    &{\apprle} \rho^{1-k} \la |4Q_i| \la^{p-1} = \rho^{1-k} \la^p |4Q_i|.
   \end{array}
  \end{equation*}

  \item[Case $i \in \Th_2$ with $r_i \leq \rho$:] If $k=0$, we make use of \eqref{lemma3.10_bound6} and in the case $k=1$, we make use of \eqref{lemma3.10_bound5}, along with \descref{W4} to  get
  \begin{equation*}
   \label{lemma3.18.2}
   \begin{array}{ll}
    \iint_{\frac34Q_i}  [|\nabla u|^{p-1}+|\th|^{p-1}]_h |\nabla \vlh| \ dz 
    & \apprle \rho^{1-k}\lbr \la  + \frac{\la^{1-p}}{s} \fiint_{4Q_i} |\tuh|^2 \ dz \rbr |4Q_i| \la^{p-1} \\
    & \apprle \rho^{1-k} \lbr \la^p |4Q_i| + \frac{1}{s} \iint_{4Q_i} |\tuh|^2 \ dz \rbr.
   \end{array}
  \end{equation*}
 \end{description}
This completes the proof of the lemma. 
\end{proof}


\begin{corollary}
 \label{corollary3.20}
There exists a positive constant $C{(n,p,q,\lamot,b_0)}$ such that  the following estimate holds for any $k \in \{0,1\}$:
 \begin{equation*}
  \label{corollary3.20.1}
  \iint_{8B \times 2\tm \setminus \elam} [|\nabla u|^{p-1}+|\th|^{p-1}]_h |\nabla^k \vlh| \ dz \leq C \rho^{1-k} \lbr \la^p |\RR^{n+1} \setminus \elam| + \frac{1}{s} \iint_{8Q} |\tuh|^2 \ dz \rbr. 
 \end{equation*}

\end{corollary}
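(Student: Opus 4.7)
The plan is to deduce this corollary by summing the per-cylinder estimate \eqref{lemma3.18.bound_1} from Lemma \ref{lemma3.18} over the Whitney covering $\{Q_i\}_{i \in \NN}$ from Lemma \ref{whitney_decomposition}. Since the Whitney covering satisfies $\bigcup_j \frac12 Q_j = \elam^c$ by \descref{W1}, and only those indices with $\frac34 Q_i \cap 2\mch \neq \emptyset$ are relevant (these form $\Th$ by definition \eqref{theta}), we first observe the inclusion
\[
8B \times 2\tm \setminus \elam \subset \bigcup_{i \in \Th} \tfrac34 Q_i.
\]
Hence the left-hand side is bounded by $\sum_{i \in \Th} \iint_{\frac34 Q_i} [|\nabla u|^{p-1}+|\th|^{p-1}]_h |\nabla^k \vlh| \ dz$.

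Next, apply Lemma \ref{lemma3.18} termwise to obtain
\[
\sum_{i \in \Th} \iint_{\frac34 Q_i} [|\nabla u|^{p-1}+|\th|^{p-1}]_h |\nabla^k \vlh| \ dz \leq C \rho^{1-k} \lbr \la^p \sum_{i \in \Th} |4Q_i| + \frac{1}{s} \sum_{i \in \Th_2} \iint_{4Q_i} |\tuh|^2 \ dz\rbr.
\]
Now I invoke the finite overlap property \descref{W6}, which says $\sum_i \lsb{\chi}{4Q_i}(z) \leq c(n)$ for all $z \in \elam^c$. Combined with the fact that $4Q_i \subset \elam^c$ from \descref{W3}, this yields $\sum_{i \in \Th} |4Q_i| \leq c(n) \, |\RR^{n+1} \setminus \elam|$ and $\sum_{i \in \Th_2} \iint_{4Q_i} |\tuh|^2 \, dz \leq c(n) \iint_{\RR^{n+1} \setminus \elam} |\tuh|^2 \, dz$.

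Finally, since $\tuh = [u]_h \, \eta(x) \zeta(t)$ with $\eta \zeta$ supported in $8Q$ (by \eqref{cut_off_function} and \eqref{def_u_tilde}), we have $\spt(\tuh) \subset 8Q$, so
\[
\iint_{\RR^{n+1} \setminus \elam} |\tuh|^2 \, dz \leq \iint_{8Q} |\tuh|^2 \, dz.
\]
Combining these estimates gives the claim. There is no real obstacle here; this is a routine summing argument, and the only care needed is to verify that the two contributions (volume term and mass term) can each be absorbed by the global quantities on the right using \descref{W3} and \descref{W6}.
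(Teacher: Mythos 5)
Your proof is correct and follows essentially the same route as the paper: cover $8B\times 2\tm\setminus \elam$ by the cylinders $\tfrac34 Q_i$ with $i\in\Th$, sum the per-cylinder estimate from Lemma \ref{lemma3.18}, control the resulting sums by the bounded-overlap property of the $4Q_i$ inside $\elam^c$, and use $\spt(\tuh)\subset 8Q$ for the mass term. The only difference is that you spell out the role of the finite overlap explicitly, whereas the paper leaves it implicit in its one-line proof; your additional detail is accurate and harmless.
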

\begin{proof}
 Since  $8B \times 2\tm \setminus \elam \subset \sum_{i\in \Th} \frac34Q_i$, all we need to do is to sum over $i\in \Th$ in \eqref{lemma3.18.bound_1} and make use of the fact that $\spt(\tuh) \subset 8Q$, to prove the Corollary. 
\end{proof}

We shall now prove a crucial estimate on each time slice. 
\begin{lemma}
 \label{pre_crucial_lemma}
 For any $i \in \Th_1$ and any $0 < \ve \leq 1$, there exists a positive constant $C{(n,p,q, \lamot,b_0)}$ such that for  almost every $t \in 2\tm$, there holds
 \begin{equation}
 \label{3.120}
  \left| \int_{8B} (\tu(x,t) - \tu^i) \vl(x,t) \om_i(x,t) \ dx \right| \leq C \lbr  \frac{\la^p}{\ve} |4Q_i| + \ve |4B_i| |\tu^i|^2\rbr. 
 \end{equation}
 
 In the case $i \in \Th_2$, there exists a positive constant $C{(n,p,q, \lamot,b_0)}$ such that for  almost every $t \in 2\tm$, there holds
 \begin{equation}
 \label{3.121}
  \left| \int_{8B} \tu(x,t) \vl(x,t) \om_i(x,t) \ dx \right| \leq C\lbr  {\la^p} |4Q_i| + \frac{1}{s}\iint_{\frac34Q_i} |u|^2 \lsb{\chi}{8Q}\ dz\rbr. 
 \end{equation}

\end{lemma}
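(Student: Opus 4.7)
The plan is to reduce the left-hand sides of both \eqref{3.120} and \eqref{3.121} to a pointwise-in-$t$ slice integral involving $\tu$ by using the pointwise bounds on $\vl$ from Lemma \ref{lemma3.10.2} and Lemma \ref{lemma3.10.3}. Since $\om_i$ is supported on $\frac34 Q_i$ and $|\om_i|\le 1$, in both cases $|\vl\om_i|$ can be pulled out of the spatial integral as an $L^\infty$ prefactor, leaving only an integral of $|\tu(\cdot,t) - \tu^i|$ or $|\tu(\cdot,t)|$ over the slice $\frac34 B_i$ to estimate.

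For case $i \in \Th_1$, I invoke \eqref{lemma3.10_bound3}, namely $|\vl(x,t)| \apprle r_i\la/\ve + \ve |\tu^i|^2/(r_i\la)$ on $\frac34 Q_i$, which reduces the estimate to the pointwise-in-$t$ slice bound
\[
\int_{\frac34 B_i}|\tu(\cdot,t) - \tu^i|\,dx \leq C\, r_i^{n+1}\la \quad\text{for a.e. }t\in 2\tm.
\]
Inserted into the prefactor, this yields \eqref{3.120} since $\la^p|4Q_i|\approx \la^2 r_i^{n+2}$, which follows from $\ga = \la^{2-p}$ and $|4Q_i|\approx r_i^{n+2}\ga$. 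To establish the slice bound itself, I pass to the Steklov-regularized solution $\tuh$ (for which $t\mapsto \tuh(\cdot,t)$ is continuous), split
\[
\tuh(\cdot,t) - \tuh^i = \lbr \tuh(\cdot,t) - \avgs{\om_i(\cdot,t)}{\tuh}(t) \rbr + \lbr \avgs{\om_i(\cdot,t)}{\tuh}(t) - \tuh^i \rbr,
\]
estimate the first summand by a spatial Poincar\'e-type inequality on the slice (applicable since $\frac34 B_i\subset\Om$ whenever $\tuh^i\ne 0$) combined with the $\la$-level averaged gradient bound that $z_i\in\elam^c$ provides, and control the second summand (the oscillation of a spatial average) using Lemma \ref{lemma_crucial_2} with a test function built from $\eta$ and $\om_i$, in complete analogy with the estimate \eqref{3.31} in the proof of Lemma \ref{lemma3.8}. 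Sending $h\to 0$ transfers the bound to $\tu$.

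For case $i \in \Th_2$, the approach is parallel but invokes \eqref{lemma3.10_bound5}, namely $|\vl(x,t)| \apprle r_i\la + \frac{r_i\la^{1-p}}{s}\fiint_{4Q_i}|\tu|^2\,dz$. Multiplying by $\int_{\frac34 B_i}|\tu(\cdot,t)|\,dx$ and applying Young's inequality with scale tuned so that $r_i\la \cdot |\tu|$ produces $\la^p$ on one side and $|\tu|^2/s$ on the other---here using the defining property $\ga r_i^2 \apprge s$ of $\Th_2$ to match scales---gives the $\la^p|4Q_i|$ contribution, while the remaining cross terms together with the second summand of $|\vl|$ are absorbed into $\frac{1}{s}\iint_{\frac34 Q_i}|u|^2\lsb{\chi}{8Q}\,dz$ after a further Steklov--PDE argument that upgrades the time-slice $L^2$-integral of $\tu$ into a space-time integral; the cutoff $\lsb{\chi}{8Q}$ appears because $\spt(\tu)\subset 8Q$ through the cutoff $\eta\zeta$.

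The main obstacle is the pointwise-in-$t$ slice estimate in case $i\in\Th_1$. The analogous time-averaged inequality $\fiint_{\frac34 Q_i}|\tu - \tu^i|\,dz\apprle r_i\la$ follows directly from the computation in the proof of Lemma \ref{lemma3.8}, but lifting it to a genuine pointwise-in-$t$ statement necessitates the Steklov regularization together with the careful PDE-based oscillation control provided by Lemma \ref{lemma_crucial_2}. This step is the technical heart of the proof; once it is in place, both \eqref{3.120} and \eqref{3.121} follow by the algebraic manipulations outlined above.
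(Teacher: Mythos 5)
Your proposal has a genuine gap in the reduction step. You propose to pull $|\vl\om_i|$ out of the spatial integral as an $L^\infty$ prefactor on $\frac34Q_i$ and then establish the pointwise-in-$t$ slice estimate $\int_{\frac34 B_i}|\tu(\cdot,t)-\tu^i|\,dx \lesssim r_i^{n+1}\la$. To bound the first summand of your decomposition by spatial Poincar\'e on the slice, you would need $\fint_{\frac34B_i}|\nabla\tu(\cdot,t)|^q\,dx \lesssim \la^q$ for almost every $t\in\frac34 I_i$. But the only gradient control that membership in $\elam^c$ provides comes from \descref{W4}, namely a bound on the space-time average $\fiint_{16Q_i}|\nabla\tu|^q\,dz\lesssim\la^q$ via the strong parabolic maximal function; this gives no control over individual time slices. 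A solution can have a single bad slice of arbitrarily large $\int_{\frac34B_i}|\nabla\tu(\cdot,t)|^q\,dx$ while the space-time average stays bounded, so the slice estimate cannot be derived from the definition of the good set. Lemma \ref{lemma_crucial_2} controls only the oscillation of spatial averages between two times, which is your second summand, not the spatial oscillation of $u$ at a fixed time.

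The paper sidesteps this difficulty by never separating $\vl\om_i$ from $\tu-\tu^i$. Instead it uses the whole product $\eta\zeta\om_i\vlh$ as a test function in \eqref{very_weak_solution} and integrates over $(t_i-\ga(3r_i/4)^2,\,t)$, exploiting that $\om_i$ vanishes at the lower endpoint. This converts the slice integral $\int_{8B}(\tuh-a)\om_i\vlh(\cdot,t)\,dy$ (with $a=\tuh^i$ or $a=0$) directly into space-time integrals of $[\aa(\cdot,\nabla u)]_h\cdot\nabla(\eta\zeta\om_i\vlh)$ and $[u]_h\,\pa_t(\eta\zeta\om_i\vlh)$, which can be bounded using the space-time-averaged gradient control from \descref{W4}, the pointwise $\vl$-bounds from Lemmas \ref{lemma3.10.1}--\ref{lemma3.10.3}, and the oscillation estimate \eqref{bound_when_i_1}. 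The essential idea you are missing is that the slice-value of the quantity of interest is expressed via the PDE as a space-time integral, rather than first factoring into an $L^\infty$ piece and a slice integral.
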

\begin{proof}
Let us fix any $t \in 2\tm$,  $i \in \Th$  and take $\eta(y) \zeta(\tau) \om_i(y,\tau) \vlh(y,\tau)$ as a test function in \eqref{very_weak_solution}. Further integrating the resulting expression over $ \left(t_i - \ga \left(\frac34 r_i\right)^2 , t\right)$ along with making use of  the fact that $\om_i(y,t_i - \ga (3r_i/4)^2) = 0$, we get for  any $a\in \RR$, the equality
%
\begin{equation}
 \label{3.123}
 \begin{array}{ll}
  \int_{\Om} \lbr[(]  (\tuh - a)  \om_i \vlh \rbr (y,t) \ dy & = \int_{t_i - \ga \left(\frac34 r_i\right)^2}^t \int_{\Om} \pa_t \left(  (\tuh - a) \om_i \vlh \right) (y,\tau) \ dy \ d\tau \\
  & = \int_{t_i - \ga \left(\frac34 r_i\right)^2}^t \int_{\Om} \iprod{[\aa(y,\tau,\nabla u)]_h}{\nabla (\eta \zeta \om_i \vlh)} \ dy \ d\tau  \\
  & \qquad +\  \int_{t_i - \ga \left(\frac34 r_i\right)^2}^t \int_{\Om}   [u]_h  \ \pa_t\lbr \eta \zeta \om_i \vlh \right) (y,\tau) \ dy \ d\tau  \\
  & \qquad -\  \int_{t_i - \ga \left(\frac34 r_i\right)^2}^t \int_{\Om} a \pa_t \lbr \om_i \vlh\rbr \ dy \ d\tau.
 \end{array}
\end{equation}

We can estimate $|\nabla (\eta \zeta \om_i \vl)|$ using the chain rule, \eqref{cut_off_function} and \descref{W9}, to get
\begin{equation}
 \label{3.126}
 \begin{array}{ll}
 |\nabla (\eta \zeta \om_i \vl)| 
 & \apprle \frac{1}{\rho} |\vl| + \frac{1}{r_i} |\vl| + |\nabla \vl|.
 \end{array}
\end{equation}
Similarly, we can estimate $\left|\pa_t\lbr \eta \zeta \om_i \vl \right)\right|$ using the chain rule, \eqref{cut_off_function} and \descref{W9}, to get
\begin{align}
 \left|\pa_t\lbr \eta \zeta \om_i \vl \right)\right|  & \apprle \frac{\lsb{\chi}{i \in \Th_2}}{s} |\vl| + \frac{1}{\ga r_i^2} |\vl| + |\pa_t \vl|, \label{3.127} \\
 \left| \pa_t \lbr \om_i \vl\rbr\right| & \apprle  \frac{1}{\ga r_i^2} |\vl| + |\pa_t \vl|,\label{3.128}
\end{align}
where we have set $\lsb{\chi}{i \in \Th_2} = 1$ if  $i \in \Th_2$ and $\lsb{\chi}{i \in \Th_2}=0$ otherwise.

Let us now prove each of the assertions of the Lemma.
\begin{description}[leftmargin=*]
 \item[Proof of \eqref{3.120}:] Note that $i \in \Th_1$, which implies $\zeta(t) \equiv 1$ on $\frac34I_i$, thus taking $a=\tuh^i$ in the \eqref{3.123} followed by letting $h \searrow 0$ and making use of \eqref{3.126}, \eqref{abounded} and \eqref{bound_b},  we get
 \begin{equation}
  \label{first_1}
  \begin{array}{ll}
   \left| \int_{8B} \lbr[(] (\tu - \tu^i) \om_i \vl \rbr (y,t) \ dy \right| & \apprle J_1 + J_2 + J_3,
  \end{array}
 \end{equation}
 where we have set 
 \begin{align}
  J_1& := \frac{1}{\min\{\rho, r_i\}} \iint_{8Q} \lbr |\nabla u|^{p-1}+|\th|^{p-1} \rbr  |\vl|   \lsb{\chi}{\frac34Q_i\cap 8Q} \ dy \ d\tau, \nonumber \\
  J_2& :=  \iint_{8Q} \lbr |\nabla u|^{p-1}+|\th|^{p-1} \rbr  |\nabla \vl|   \lsb{\chi}{\frac34Q_i\cap 8Q} \ dy \ d\tau,\nonumber \\
  J_3&:= \iint_{8Q} |\tu-\tu^i| | \pa_t (\om_i \vl)|  \lsb{\chi}{\frac34Q_i\cap 8Q} \ dy \ d\tau.\label{bound_J_3_1}
 \end{align}

 Let us now estimate each of the terms as follows: 
 \begin{description}
  \item[Bound for $J_1$:] If $\rho \leq r_i$,  we can directly use H\"older's inequality, Lemma \ref{lemma3.7} and \descref{W4}, to find that for any $\ve \in (0,1]$, there holds 
  \begin{equation}
   \label{bound_I_1_rho_leq_r_i}
   \begin{array}{ll}
    J_1 & \apprle \la |Q_i| \lbr \fiint_{16Q_i} |\nabla u|^{q}+|\th|^q   \ dy \ d\tau\rbr^{\frac{p-1}{q}} \apprle \la^p |4Q_i|\leq \frac{\la^p}{\ve} |4Q_i|.
   \end{array}
  \end{equation}
  In the case $r_i \leq \rho$, we make use of \eqref{lemma3.10_bound3}, \descref{W4} along with the fact $|Q_i| = |B_i| \times 2\la^{2-p} r_i^2$, to get
  \begin{equation}
  \label{bound_I_1_rho_geq_r_i}
   \begin{array}{ll}
    J_1
    & {\apprle} \frac{1}{r_i} \lbr \frac{r_i \la}{\ve} + \frac{\ve}{\la r_i} |\tu^i|^2  \rbr \lbr \iint_{4Q_i} |\nabla u|^{p-1}  +|\th|^{p-1} \ dy \ d\tau \rbr \\
    & {\apprle} \frac{1}{r_i} \lbr \frac{r_i \la}{\ve} + \frac{\ve}{\la r_i}  |\tu^i|^2  \rbr |4Q_i|\la^{p-1} 
     {\apprle} \frac{\la^p}{\ve} |4Q_i| +  \ve |4B_i| |\tu^i|^2.	
   \end{array}
  \end{equation}

Thus combining \eqref{bound_I_1_rho_geq_r_i} and \eqref{bound_I_1_rho_leq_r_i}, we get
\begin{equation}
 \label{bound_I_1}
 J_1 \apprle \frac{\la^p}{\ve} |4Q_i| +  \lsb{\chi}{r_i \leq \rho}\ve |4B_i| |\tuh^i|^2,
\end{equation}
where we have set $\lsb{\chi}{r_i \leq \rho} = 1$ if  $r_i \leq \rho$ and $\lsb{\chi}{r_i \leq \rho}=0$ else.

  \item[Bound for $J_2$:] In this case, we can directly use Lemma \ref{lemma3.9} and \descref{W4} to get for any $\ve \in (0,1]$, the bound
  \begin{equation}
   \label{bound_I_22}
    J_2 
     \apprle \frac{\la}{\ve} \iint_{4Q_i}|\nabla u|^{p-1}+ |\th|^{p-1} \ dy \ d\tau 
   \apprle \frac{\la^p}{\ve} |4Q_i|.
  \end{equation}
  \item[Bound for $J_3$:] Substituting  \eqref{lemma3.10_bound4}, \eqref{lemma3.11.bound2} and \descref{W9} into \eqref{3.128},  for any $\ve \in (0,1]$, there holds
  \begin{equation}
   \label{bound_3_1}
   \begin{array}{ll}
    |\pa_t(\om_i \vl)(z)| 
    & \apprle \frac{1}{\ga r_i^2} \lbr \frac{r_i \la}{\ve}  + \frac{\ve}{r_i \la} |\tuh^i|^2 \rbr. 
   \end{array}
  \end{equation}
Making use of  \eqref{bound_3_1} in the expression for $J_3$ in \eqref{bound_J_3_1}, we get
  \begin{equation*}
   \label{bound_I_3}
   \begin{array}{ll}
    J_3 & \apprle \frac{1}{\ga r_i^2} \lbr \frac{r_i \la}{\ve}  + \frac{\ve}{r_i \la} |\tuh^i|^2 \rbr\iint_{\frac34Q_i} |\tu-\tu^i|    \ dy \ d\tau.
   \end{array}
  \end{equation*}
  We can use \eqref{bound_when_i_1} to get 
%
%
%
%
\begin{equation}
 \label{bound_I33}
 J_3 \apprle \frac{\la^2}{r_i^2 \ve} |Q_i| + \frac{\ve}{\ga r_i^2} |\tu^i|^2 |Q_i| \apprle \frac{\la^p }{\ve} |4Q_i| + \ve |4B_i| |\tu^i|^2.
\end{equation}

 \end{description}

 Substituting the estimates \eqref{bound_I_1}, \eqref{bound_I_22} and \eqref{bound_I33} into \eqref{first_1}, we obtain the proof of  \eqref{3.120}. 
 
 \item[Proof of \eqref{3.121}:] Note that in this case, we have $\ga r_i^2 \apprge s$.  Setting $a =0$ in \eqref{3.123} along with making use of the the bounds  \eqref{3.126} and \eqref{3.127}, we get the following estimate:
 \begin{equation}
  \label{second_1}
  \begin{array}{ll}
   \left| \int_{8B} \lbr[(] \tu  \om_i \vl \rbr (y,t) \ dy \right| & \apprle II_1 + II_2+II_3+II_4,
  \end{array}
 \end{equation}
where we have set 
\begin{equation*}
 \begin{array}{ll}
  II_1 & :=  \frac{1}{\min\{\rho, r_i\}} \iint_{8Q}\lbr  |\nabla u|^{p-1}+  |\th|^{p-1}\rbr |\vl|   \lsb{\chi}{\frac34Q_i\cap 8Q} \ dy \ d\tau, \\
  II_2 & :=  \iint_{8Q} \lbr |\nabla u|^{p-1} +|\th|^{p-1}\rbr  |\nabla \vl|   \lsb{\chi}{\frac34Q_i\cap 8Q} \ dy \ d\tau, \\
  II_3 & :=  \frac1{\min\{s,\ga r_i^2\}} \iint_{8Q} |u| |\vl|  \lsb{\chi}{\frac34Q_i\cap 8Q} \ dy \ d\tau,\\
  II_4 & :=   \iint_{8Q} |u| |\pa_t\vl|  \lsb{\chi}{\frac34Q_i\cap 8Q} \ dy \ d\tau.
 \end{array}
\end{equation*}
We shall now estimate each of the terms as follows:
\begin{description}[leftmargin=*]
 \item[Bound for $II_1$:] 
 In the case $\rho \leq r_i$, we can directly use Lemma \ref{lemma3.7} along with \descref{W4}, to get the estimate
 \begin{equation}
  \label{3.83_1}
    II_1 \apprle \la \iint_{4Q_i} |\nabla u|^{p-1} +|\th|^{p-1} \ dy \ d\tau \apprle  |4Q_i| \la^p.
  \end{equation}
  
  In the case $r_i \leq \rho$, we make use of \eqref{lemma3.10_bound1}, \descref{W4} along with the bounds $|\tu| \leq |u|\lsb{\chi}{8Q}$ and $\ga r_i^2 \apprge s$, to get 
  \begin{equation}
  \label{3.84_1}
   \begin{array}{ll}
    II_1 & \apprle \frac{1}{r_i} \lbr r_i \la + \frac{1}{\la r_i} \fiint_{4Q_i} |\tu|^2 \ dy \ d\tau \rbr \lbr \iint_{4Q_i} |\nabla u|^{p-1}+|\th|^{p-1} \ dy \ d\tau \rbr \\
    & \apprle \la^p |4Q_i| + \frac{1}{s} \iint_{4Q_i} |u|^2 \lsb{\chi}{8Q} \ dy \ d\tau.
   \end{array}
  \end{equation}
Thus combining \eqref{3.83_1} and \eqref{3.84_1}, we get 
\begin{equation}
 \label{II_1_1}
 II_1 \apprle \la^p |4Q_i| + \frac{1}{s} \iint_{4Q_i} |u|^2\lsb{\chi}{8Q} \ dy \ d\tau.
\end{equation}

 \item[Bound for $II_2$:] To bound this term, we make use of \eqref{lemma3.10_bound6}, \descref{W4} along with the observation $|\tu| \leq |u|\lsb{\chi}{8Q}$ and $\ga r_i^2 \apprge s$, to get
 \begin{equation}
  \label{II_2_1}
  \begin{array}{ll}
   II_2 & \apprle \lbr \iint_{4Q_i} |\nabla u|^{p-1} +|\th|^{p-1}\ dy \ d\tau \rbr \lbr \la + \frac{\la^{1-p}}{s} \fiint_{4Q_i} |\tu|^2 \ dy \ d\tau \rbr \\
           & \apprle \la^p |4Q_i| + \frac{1}{s} \iint_{4Q_i} |u|^2 \lsb{\chi}{8Q}\ dy \ d\tau.
  \end{array}
 \end{equation}

 \item[Bound for $II_3$:] For bounding this term, we  use  \eqref{lemma3.10_bound1} along with $\ga r_i^2 \apprge s$ and $|\tu|\leq|u| \lsb{\chi}{8Q}$,  to  get
 \begin{equation}
  \label{II_3_1}
  \begin{array}{ll}
   II_3 & \apprle \frac{1}{s} |4Q_i|\lbr \fiint_{4Q_i} |u| \lsb{\chi}{8Q} \ dy \ d\tau\rbr\lbr  \fiint_{4Q_i} |\tu| \ dy \ d\tau \rbr \\
   & \apprle \frac{1}{s} \iint_{4Q_i} |u|^2 \lsb{\chi}{8Q} \ dy \ d\tau.
  \end{array}
 \end{equation}

 \item[Bound for $II_4$:] In this case, we make use of \eqref{lemma3.11.bound1} along with  $\ga r_i^2 \apprge s$ and $|\tu|\leq|u| \lsb{\chi}{8Q}$, to  get
 \begin{equation}
  \label{II_4_1}
  \begin{array}{ll}
   II_4 & \apprle  |4Q_i|\lbr \fiint_{4Q_i} |u| \lsb{\chi}{8Q} \ dy \ d\tau\rbr\lbr \frac{1}{\ga r_i^2} \fiint_{4Q_i} |\tu| \ dy \ d\tau \rbr \\
   & \apprle \frac{1}{s} \iint_{4Q_i} |u|^2 \lsb{\chi}{8Q} \ dy \ d\tau.
  \end{array}
 \end{equation}
\end{description}
We finally combine the estimates \eqref{II_1_1}, \eqref{II_2_1}, \eqref{II_3_1} and \eqref{II_4_1} and recalling \eqref{second_1},  we obtain \eqref{3.121}.
\end{description}
This completes the proof of the lemma. 
\end{proof}

We now come to  essentially the most important estimate which will be used to obtain the Caccioppoli inequality in Section \ref{Caccioppoli_inequality}.
\begin{lemma}
 \label{crucial_lemma}
 There exists a positive constant $C{(n,p,q, \lamot,b_0)}$ such that the following estimate holds for every $t \in 2\tm$:
 \begin{equation}
 \label{3122}
  \int_{8B \setminus \elam^t} (|\tu|^2 - |\tu - \vl|^2)(x,t) \ dx \geq C \lbr - \la^p |\RR^{n+1} \setminus \elam|  - \frac{1}{s} \iint_{8Q} |\tu|^2 \ dz \rbr.
 \end{equation}

\end{lemma}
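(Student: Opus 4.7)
The plan is to simplify the integrand by the algebraic identity
\begin{equation*}
|\tu|^2 - |\tu-\vl|^2 = 2\tu\vl - |\vl|^2,
\end{equation*}
and then multiply by the partition of unity $\sum_i \om_i \equiv 1$ on $\elam^c$ (which is \descref{W10}) to obtain
\begin{equation*}
\int_{8B \setminus \elam^t}(|\tu|^2 - |\tu-\vl|^2)(x,t)\,dx = \sum_i \int_{8B}\om_i(2\tu\vl - |\vl|^2)(x,t)\,dx.
\end{equation*}
Only those $i$ with $t \in \tfrac{3}{4}I_i$ contribute to the sum, and the natural split is according to whether $i \in \Th_1$ or $i \in \Th_2$, since each class is tailored to one of the two statements of Lemma \ref{pre_crucial_lemma}.

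For $i \in \Th_1$, I would apply the algebraic rewriting
\begin{equation*}
\om_i(2\tu\vl - |\vl|^2) = 2\om_i(\tu-\tu^i)\vl + \om_i (\tu^i)^2 - \om_i(\tu^i - \vl)^2,
\end{equation*}
bound the first term by \eqref{3.120} of Lemma \ref{pre_crucial_lemma}, drop the second (which is manifestly non-negative and only helps the lower bound), and control the remainder via the identity $\vl - \tu^i = \sum_{j\in A_i}\om_j(\tu^j - \tu^i)$ combined with Lemma \ref{lemma3.8}, which gives $|\vl-\tu^i| \apprle r_i\la$ on $\spt(\om_i)$ and hence $\int \om_i (\tu^i - \vl)^2\,dx \apprle r_i^2\la^2|B_i| \approx \la^p |Q_i|$ by definition of $\ga = \la^{2-p}$. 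For $i \in \Th_2$, I would work with $\om_i(2\tu\vl - |\vl|^2)$ directly: \eqref{3.121} of Lemma \ref{pre_crucial_lemma} controls $\int \om_i \tu\vl\,dx$, while the pointwise bound on $\vl$ from Lemma \ref{lemma3.10.3}, together with $\ga r_i^2 \apprge s$ on $\Th_2$, gives $\int \om_i|\vl|^2\,dx \apprle \la^p|Q_i| + \tfrac{1}{s}\iint_{4Q_i}|\tu|^2\,dz$.

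Summing these local bounds, the finite-overlap property \descref{W13} together with $\sum_i |Q_i| \apprle |\RR^{n+1}\setminus\elam|$ (from \descref{W5}--\descref{W7}) consolidate everything into the two terms $\la^p |\RR^{n+1}\setminus\elam|$ and $\tfrac{1}{s}\iint_{8Q}|\tu|^2\,dz$ appearing in the asserted lower bound. The main technical point I expect to wrestle with is the term $\ve|4B_i||\tu^i|^2$ produced by \eqref{3.120}; choosing $\ve$ appropriately and exploiting the non-negative piece $(\tu^i)^2\int \om_i\,dx$ together with the Jensen-type bound $|\tu^i|^2 \apprle \fint_{\frac34 Q_i}|\tu|^2\,dz$ (which follows from the defining average \eqref{def_tuh_i}) should convert these contributions into $L^2$ norms of $\tu$ over the Whitney cylinders that, by finite overlap, collect into the global $\tfrac{1}{s}\iint_{8Q}|\tu|^2\,dz$.
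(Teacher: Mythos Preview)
Your outline matches the paper's proof almost exactly: same algebraic split, same decomposition into $\Th_1$ and $\Th_2$, and the same invocations of \eqref{3.120}, \eqref{3.121}, and Lemma~\ref{lemma3.8}. Two small corrections. First, for $\int\om_i|\vl|^2\,dx$ on $\Th_2$ the paper uses the first inequality in \eqref{lemma3.10_bound1}, namely $|\vl|\apprle\fiint_{4Q_i}|\tu|\,dz$, which after Cauchy--Schwarz and $\ga r_i^2\apprge s$ gives the claimed bound cleanly; your route through Lemma~\ref{lemma3.10.3} squares the average of $|\tu|^2$ and does not close. Second, your fallback of converting $\ve|4B_i||\tu^i|^2$ into $\tfrac{1}{s}\iint|\tu|^2$ via Jensen fails for $i\in\Th_1$, since it produces a factor $\tfrac{1}{\ga r_i^2}$ that is \emph{not} dominated by $\tfrac{1}{s}$ when $r_i$ is small. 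The correct move (which you also mention, and which the paper carries out in \eqref{bound_I1}) is absorption: when $\tu^i\neq 0$ one has $\tfrac34 B_i\subset 8B$, so the positive piece $(\tu^i)^2\int_{8B}\om_i\,dx\apprge c_n|4B_i||\tu^i|^2$ swallows the $\ve$-term for $\ve$ small enough depending only on $n$.
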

\begin{proof}
 Let us fix any $t\in 2\tm$ and any point $x \in 8B \setminus \elam^t$.  Now define
 \begin{equation}
  \tTh := \left\{  i \in \Th: \spt(\om_i) \cap 8B \times \{t\} \neq \emptyset \quad \text{and} \quad |\tu| + |\vl| \neq 0 \quad  \text{on}\ \spt(\om_i)\cap (8B \times \{t\}) \right\}.
 \end{equation}

 From \eqref{theta}, we see that if $\spt(\om_i) \cap 8B \times \{t\} \neq \emptyset$, then  $i \in \Th$.  If $i \neq \tTh$, then $\tu = \vl = 0$ on $\spt(\om_i) \cap 8B \times \{t\}$, which  implies
 \[
  \int_{\spt(\om_i) \cap 8B \times\{t\}} |\tu|^2 - |\tu - \vl|^2 \ dx = 0.
 \]
 Hence we only need to consider $i \in \tTh$.  We now decompose  $\tTh = \tTh_1 \cup \tTh_2$, where $\tTh_1 := \tTh \cap \Th_1$ and $\tTh_2 := \tTh \cap \Th_2$. Noting that $\sum_{i \in \tTh} \om_i(\cdot, t) \equiv 1$ on $\RR^n \cap \elam^t$, we can rewrite the left-hand side of \eqref{3122} as 
 \begin{equation}
  \label{3124}
  \int_{8B \setminus \elam^t} (|\tu|^2 - |\tu - \vl|^2)(x,t) \ dx = J_1 + J_2,
 \end{equation}
where we have set
\begin{gather*}
 J_1:= \sum_{i \in \tTh_1} \int_{8B} \om_i (|\tu|^2 - |\tu - \vl|^2) \ dx, \qquad 
 J_2:= \sum_{i \in \tTh_2} \int_{8B} \om_i (|\tu|^2 - |\tu - \vl|^2) \ dx 
\end{gather*}
We shall now estimate each of the terms as follows:
\begin{description}[leftmargin=*]
 \item[Estimate of $J_1$:] We can further rewrite this term as follows:
 \begin{equation}
  \label{I_1}
  \begin{array}{ll}
  J_1 & = \sum_{i \in \tTh_1} \int_{8B} \om_i(z) \lbr |\tu^i|^2  + 2 \vl (\tu - \tu^i) \rbr \ dx - \sum_{i \in \tTh_1} \int_{8B} \om_i(z) |\vl - \tu^i|^2 \ dx\\
  & := J_1^1 + J_1^2.
  \end{array}
 \end{equation}
%
 \begin{description}
  \item[Estimate of $J_1^1$:] Using \eqref{3.120}, we get
  \begin{equation}
   \label{I_1_1}
   J_1^1 \apprge \sum_{i \in \tTh_1} \int_{8B} \om_i(z)  |\tu^i|^2 \ dz - \ve \sum_{i \in \tTh_1} |4B_i| |\tu^i|^2 - \sum_{i \in \tTh_1} \frac{\la^p}{\ve} |4Q_i|.
  \end{equation}
  From \eqref{def_tuh_i}, we have $\tu^i = 0$ whenever $\spt(\om_i) \cap \Om_{8\rho}^c \neq \emptyset$. Hence we only have to sum over all those $i \in \tTh_1$ for which $\spt(\om_i) \subset \mcc_{8\rho}$.  In this case, we make use of a suitable choice for $\ve \in (0,1]$, and use \descref{W7} along with \descref{W8}, to estimate \eqref{I_1_1} from below. We have
  \begin{equation}
 \label{bound_I1}
 J_1^1 \apprge -\la^p |\RR^{n+1} \setminus \elam|. 
\end{equation}

%
%
%
%
%

  \item[Estimate of $J_1^2$:] For any $x \in 8B \setminus \elam^t$, we have from \descref{W10} that $\sum_{j} \om_j(x,t) = 1$, which gives
  \begin{equation}
   \label{I_2_1}
   \begin{array}{ll}
    \om_i(z) |\vl(z) - \tu^i|^2  
    & \apprle \om_i(z)  \sum_{j \in A_i} |\om_j(z)|^2  \lbr \tu^j - \tu^i\rbr^2 \\
    & \overset{\redlabel{3.104.a}{a}}{\apprle} \min\{ \rho, r_i\}^2 \la^2.
   \end{array}
  \end{equation}
To obtain \redref{3.104.a}{a} above, we made use of Lemma \ref{lemma3.8} (recall $i \in \tTh_1 \subset \Th_1$) along with \descref{W13}.  Substituting \eqref{I_2_1} into the expression for $J_1^2$ and using $|Q_i| = |B_i| \times 2\ga r_i^2$, we get
\begin{equation}
 \label{bound_I_2}
J_1^2 
\apprle 
 \sum_{i \in \tTh_1} |8B \cap \frac34B_i| \frac{\ga r_i^2}{\ga} \la^2 
 \apprle \la^p |\RR^{n+1} \setminus \elam|.
\end{equation}
 \end{description}
Substituting \eqref{bound_I1} and \eqref{bound_I_2} into \eqref{I_1}, we get
\begin{equation}
 \label{bound_I}
 J_1 \apprge - \la^p |\RR^{n+1} \setminus \elam|.
\end{equation}

 \item[Estimate of $J_2$:] From the identity $|\tu|^2 - |\tu-\vl|^2 = 2\tu \vl - |\vl|^2$, we get
 \begin{equation}
  \label{II1}
  \begin{array}{ll}
  \sum_{i \in \tTh_2} \left| \int_{8B} \om_i (|\tu|^2 - |\tu - \vl|^2) \ dx\right| & \apprle \sum_{i \in \tTh_2} \left| \int_{8B} \om_i \tu \vl \ dx\right| + \sum_{i \in \tTh_2} \left| \int_{8B} \om_i |\vl|^2 \ dx\right| \\
  & := J_2^1 + J_2^2.
  \end{array}
 \end{equation}
\begin{description}
 \item[Estimate for $J_2^1$:]  This term can be easily estimated using \eqref{3.121} followed by summing over $i \in \tTh_2$ and using \descref{W13}, we get
 \begin{equation}
  \label{II1_1}
  \begin{array}{ll}
   \sum_{i \in \tTh_2} \left| \int_{8B} \om_i \tu \vl \ dx\right| & \apprle \sum_{i \in \tTh_2}\lbr  \la^p |Q_i| + \frac{1}{s} \iint_{4Q_i} |u|^2 \lsb{\chi}{8Q}\ dz\rbr \\
   & \apprle \la^p |\RR^{n+1} \setminus \elam| + \frac{1}{s} \iint_{8Q} |u|^2 \ dz.
  \end{array}
 \end{equation}

 \item[Estimate for $J_2^2$:] To estimate this term, we make use of \eqref{lemma3.10_bound1} along with $|Q_i| = |B_i| \times 2\ga r_i^2$, $\ga r_i^2 \apprge s$, the bound $|\tu| \leq |u| \lsb{\chi}{8Q}$ and \descref{W13}, to get
 \begin{equation}
  \label{II2_1}
  \begin{array}{ll}
   \sum_{i \in \tTh_2} \left| \int_{8B} \om_i |\vl|^2 \ dx\right| & \apprle \sum_{i \in \tTh_2}  \lbr \fiint_{4Q_i} |\tu| \ dz \rbr^2 |8B \cap \frac34B_i| 
    \apprle \frac{1}{s} \iint_{8Q} |u|^2 \ dz.
  \end{array}
 \end{equation}
\end{description}
 We combine \eqref{II1_1}, \eqref{II2_1} and \eqref{II1} to obtain
 \begin{equation}
  \label{bound_II}
  J_2\apprle \la^p |\RR^{n+1} \setminus \elam| + \frac{1}{s} \iint_{8Q} |u|^2 \ dz.
 \end{equation}
\end{description}
Thus, from \eqref{bound_I}, \eqref{bound_II} and\eqref{3124}, the proof of the Lemma follows.
%
%
\end{proof}

\section{Caccioppoli inequality and Reverse H\"older inequality}\label{Caccioppoli_inequality}
We now have all the estimates need to prove a Caccioppoli type inequality near the boundary. \emph{In subsequent calculations, we shall alternate between $\mcc_{8\rho} =\Om_{8\rho} \times 8\tm$ and $8Q$ depending on convenience, as $\vlh(z) = \tu(z) =u(z)= 0$ for all $z \in \mcc_{8\rho}^c$.}

Before we state Lemma \ref{Caccioppoli}, let us make the following remark:
\begin{remark}
 \label{def_al_0_1}
 Under the hypothesis of Lemma \ref{Caccioppoli} (more specifically suppose \eqref{hypothesis_1} holds), then we see that there exists a universal constant $c_e = c_e(n)$ such that for any $\la \geq c_e \al_0$, there holds $\elam \neq \emptyset$. This follows from Lemma \ref{bound_g_x_t} and \eqref{hypothesis_1} below. 
\end{remark}

\subsection{Caccioppoli Inequality}
We shall prove the Caccioppoli inequality in this subsection.
\begin{lemma}
 \label{Caccioppoli}
 Let $\ka \geq 1$ and $\Om$ satisfy Definition \ref{p_thick_domain} with constants $r_0,b_0$. Then there exists  constants $\be_0 = \be_0(n,p,q,\lamot,b_0,\ka) \in (0,1)$ small and $C = C(n,p,q,\La_0,\La_1,\ka,b_0)$ such that the following holds.
Let $Q := B \times \tm = Q_{\rho,s}(z)$ be a parabolic cylinder  with $s = \al_0^{2-p} \rho^2$   for some $\al_0>0$ as given in \eqref{def_s} and let $u \in L^2(-T,T;L^2(\Om)) \cap L^{p-\be}(-T,T; W_0^{1,p-\be}(\Om))$ for some $\be \in (0,\be_0)$ be any very weak solution of \eqref{main} in the sense of \eqref{very_weak_solution}.  Suppose the following bound holds:
 \begin{gather}
  \frac{\al_0^{p-\be}}{\ka} \leq \fiint_Q (|\nabla u|+|\th|)^{p-\be} \ dz \quad \text{and} \quad \fiint_{8Q} (|\nabla u|+|\th|)^{p-\be} \ dz \leq \ka \al^{p-\be}_0 \label{hypothesis_1},
 \end{gather}
 where $h$ is from \eqref{bound_b}. Then we have
 \begin{equation*}
  \label{conclusion_1}
  \al_0^{p-\be} + \al_0^{p-2} \sup_{t \in\tm} \hint_B \left| \frac{u(x,t)}{\rho} \right|^2 \mathcal{M}(x,t)^{-\be} \ dx \leq {\cac}\fiint_{8Q}\lbr  \al_0^{p-2-\be} \left| \frac{u}{\rho} \right|^2 + \left| \frac{u}{\rho} \right|^{p-\be} + |\th|^{p-\be} \rbr dz
 \end{equation*}
 where we have set $\mathcal{M}(x,t):=\max\{g(x,t), \al_0\}$.
\end{lemma}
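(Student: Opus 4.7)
The plan is to use the Lipschitz truncation $\vlh$ constructed in Section \ref{Construction_of_test_function} as a test function in the very weak formulation \eqref{def_weak_solution}. By Lemma \ref{lemma3.15}, $\vlh$ is Lipschitz in space-time with compact support in $\mcc_{8\rho}$, so it is an admissible test function in the sense of Definition \ref{very_weak_solution}. Testing \eqref{def_weak_solution} with $\vlh(\cdot,t)$ and integrating against $\zeta(t)$ in time, we split the spatial integration into $\elam^t$ and its complement. On $\elam^t$ we have the identity $\vlh = \tuh = [u]_h\,\eta\,\zeta$, so the time-derivative term contributes an energy-type expression in $|\tuh|^2$, while the flux term reproduces the standard Caccioppoli integrals in $|\nabla u|^p\eta^p\zeta$ plus cutoff errors controlled by $|u/\rho|^p + |\th|^p$.

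On $\elam^c$, the divergence term is bounded by Corollary \ref{corollary3.20}, and the parabolic term $\pa_t\vlh(\vlh-\tuh)$ by Lemma \ref{lemma3.14}, both of which produce errors of the form $\la^p|\RR^{n+1}\setminus\elam| + \tfrac{1}{s}\iint_{8Q}|\tuh|^2$. To recover the $\sup$-in-time part of the LHS with the correct weight, we invoke the time-slice identity of Lemma \ref{crucial_lemma}, which gives for almost every $t\in 2\tm$
\[
\int_{8B\setminus \elam^t} |\tu|^2(x,t)\,dx \leq \int_{8B\setminus\elam^t}|\tu-\vl|^2(x,t)\,dx + C\la^p |\RR^{n+1}\setminus\elam| + \frac{C}{s}\iint_{8Q} |\tu|^2\,dz.
\]
Combining these estimates with the Steklov limit $h\searrow 0$ and a standard iteration via Lemma \ref{iteration_lemma} to absorb the cutoff derivatives, we arrive at an inequality of the schematic form
\[
\sup_{t\in 2\tm}\int_{\elam^t\cap 8B}|\tu|^2(x,t)\,dx + \iint_{\elam\cap 4Q}|\nabla u|^p\,dz \leq C\la^p|\{g>\la\}\cap 8Q| + C\iint_{8Q}\Bigl(\Bigl|\tfrac{u}{\rho}\Bigr|^p+|\th|^p\Bigr)dz + \frac{C}{s}\iint_{8Q}|u|^2\,dz
\]
valid for every $\la \geq c_e\al_0$.

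To introduce the $\mathcal{M}^{-\be}$ weight, multiply the above inequality by $\la^{-1-\be}$ and integrate over $\la\in[c_e\al_0,\infty)$. By Fubini, on any $z$ with $g(z)\leq c_e\al_0$ one has $\int_{c_e\al_0}^\infty \la^{-1-\be}\lsb{\chi}{\elam}(z)\,d\la=\be^{-1}(c_e\al_0)^{-\be}$, while on $\{g>c_e\al_0\}$ the integral equals $\be^{-1} g(z)^{-\be}$; together these produce $\be^{-1}\mathcal{M}(z)^{-\be}$ on the LHS. On the RHS, $\int_{c_e\al_0}^\infty\la^{p-1-\be}|\{g>\la\}|\,d\la \leq (p-\be)^{-1}\iint g^{p-\be}\,dz$, which by Lemma \ref{bound_g_x_t} is bounded by $C\iint_{8Q}(|\nabla u|+|\th|)^{p-\be}\,dz$. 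The factors $1/\be$ on both sides cancel, and the remaining $1/(p-\be)$ is absorbed into $C$. Using that $1/s = \al_0^{p-2}/\rho^2$ by \eqref{def_s} to match the $\al_0^{p-2-\be}|u/\rho|^2$ term, and recalling that hypothesis \eqref{hypothesis_1} bounds $\al_0^{p-\be}$ by the RHS, produces the desired inequality.

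\textbf{Main obstacle.} The delicate step is controlling the flux term $\iint_{\elam}|\nabla u|^p$ after $\la$-integration: this becomes $\iint |\nabla u|^{p-\be}\mathcal{M}^{-\be}$-like quantities that must be absorbed into the RHS. Because the LHS of the stated Caccioppoli inequality does not contain a gradient term, the absorption must transfer all the gradient energy to $(|\nabla u|+|\th|)^{p-\be}$ on $8Q$ via \eqref{hypothesis_1}; this is where the smallness of $\be_0$ depending on $\ka$ enters, as well as the self-improvement exponent $q < p-2\be$ ensuring that all gradient integrals that arise in the truncation estimates (which are raised to the power $q$) can be controlled by $(p-\be)$-integrals. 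Carefully tracking the $\be$-dependence through Fubini and the Whitney covering overlap constants, and verifying that the final iteration closes uniformly for $\be\in(0,\be_0]$, is the principal technical point.
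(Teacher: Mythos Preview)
Your proposal follows the same strategy as the paper: test with the Lipschitz truncation, split into $\elam$ and $\elam^c$, control bad-set contributions via Corollary~\ref{corollary3.20}, Lemma~\ref{lemma3.14}, and Lemma~\ref{crucial_lemma}, then integrate $\la^{-1-\be}\,d\la$ over $[c_e\al_0,\infty)$ and apply Fubini to produce the $\mathcal{M}^{-\be}$ weight. Two points need correction.

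First, no iteration via Lemma~\ref{iteration_lemma} occurs in this proof; that lemma appears only later in Lemma~\ref{lemma6.2}. The cutoff-gradient error (the paper's $\aa_3$) is handled by Young's inequality with a small parameter, and the layer-cake term $K_3=\int_{c_e\al_0}^\infty \la^{p-1-\be}|\{g>\la\}|\,d\la$ carries a factor $1/(p-\be)$, not $1/\be$. Hence after multiplying the whole inequality by $\be$ this term becomes $O(\be)\,\al_0^{p-\be}|Q|$ and is absorbed into the left side; this, together with the $\be^{p-\be}$-error in the key lower bound
\[
\iint_Q|\nabla u|^{p-\be}\,dz \ \apprle\ \iint_Q\mathcal{M}^{-\be}|\nabla u|^p\,dz \ +\ \be^{p-\be}\!\iint_{8Q}|\nabla u|^{p-\be}\,dz \ +\ \be^{p-\be}|Q|\,\al_0^{p-\be},
\]
is precisely where the smallness of $\be_0$ depending on $\ka$ enters. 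Saying the $1/(p-\be)$ is ``absorbed into $C$'' misses this absorption mechanism.

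Second, the paper does not ``integrate against $\zeta(t)$''. It tests with $\eta(x)\vlh$ and integrates over $(t_1,t)\subset 2\tm$ (where $\zeta\equiv1$) for arbitrary $t\in\tm$; the initial-time boundary term $J_1(t_1)=\tfrac12\int_{\Om_{8\rho}}\bigl(|\tuh|^2-|\vlh-\tuh|^2\bigr)(\cdot,t_1)\,dx$ is controlled by choosing $t_1\in(-4s,-s)$ so that $|J_1(t_1)|$ sits below its time average, giving $|J_1(t_1)|\apprle\frac{1}{s}\iint_{8Q}|u|^2\,dz$. Integrating against $\zeta$ over the full time interval would destroy the pointwise-in-$t$ structure you need for the $\sup_{t\in\tm}$ on the left side.
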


\begin{proof}[Proof of Lemma \ref{Caccioppoli}]
Let $(t_1 ,t) \subset 2\tm$ be a time interval and recall the definition of $\vlh$ from \eqref{lipschitz_extension}.  We  shall make use of  $\eta(x)\vlh(x,\tau)$ as a test function in \eqref{very_weak_solution} (this is possible since $\spt(\vlh) \subset \Om_{8\rho} \times 8\tm$ and $\vlh C^{0,1}(2\mch)$ from Lemma \ref{lemma3.15}). Thus, after integrating the resulting expression of \eqref{very_weak_solution} over $(t_1,t)$, we get
%
 \begin{equation}
 \label{cac1}
  L_1 + L_2:=\int_{t_1}^t \left[ \int_{\Om_{8\rho}} \frac{d{[u]_h}}{d\tau} \eta(y) \vlh(y,\tau) + \iprod{[\aa(y,\tau,\nabla u)]_h}{\nabla (\eta \vlh)} \ dy\right]\ d\tau = 0.
 \end{equation}

 \begin{description}[leftmargin=*]
  \item[Estimate of $L_1$:] Note that $\zeta(\tau) = 1$ for all $\tau \in (t_1,t)$. Thus making use of the standard hole filling technique and  \eqref{def_u_tilde}, we get
  \begin{equation}
   \label{6.38}
   \begin{array}{ll}
    L_1 
    & = \int_{t_1}^t \int_{\Om_{8\rho}} \dds{\tu_h(y,\tau)}  \vlh(y,s) \ dy\ d\tau \\
    & = \int_{t_1}^t \int_{\Om_{8\rho}\setminus \elam^{\tau}}  \dds{\vlh}  (\vlh-\tu_h) \ dy \ d\tau +    \int_{t_1}^t \int_{\Om_{8\rho}}  \frac{d{\lbr (\tu_h)^2 - (\vlh - \tu_h)^2 \rbr }}{d\tau}  \ dy \ d\tau \\
    & := J_2 + J_1(t) - J_1(t_1),
   \end{array}
  \end{equation}
  where we have set 
  \begin{equation*}
  \label{def_i_1}J_1(\tau) := \frac12 \int_{\Om_{8\rho}} ( (\tu_h)^2 - (\vlh - \tu_h)^2 ) (y,\tau) \ dy.
  \end{equation*}

  \begin{description}
   \item[Estimate for $J_2$:] Taking absolute values and making use of Lemma \ref{lemma3.14} with $\vartheta =1$, we get
   \begin{equation}
    \label{6.39}
    \begin{array}{ll}
     |J_2| 
     & \apprle \iint_{8Q\setminus \elam}   \left| \dds{\vlh}  (\vlh-\tu_h)\right| \ dy \ d\tau 
       \apprle\la^p |\RR^{n+1} \setminus \elam| + \frac{1}{s} \iint_{8Q} |[u]_h|^2 \ dy \ d\tau. 
    \end{array}
   \end{equation}

   \item[Estimate for $J_1(t_1)$:] We first claim that we can choose $t_1\in (-2^2s,-s)$ such that
   \begin{equation}
    \label{6.40}
    |J_1(t_1)|  
    \leq \frac{1}{s} \int_{-2^2s}^{-s} \left| \int_{\Om_{8\rho}} \lbr |\tu_h|^2 - |\vlh - \tu_h|^2 (x,\tau)\rbr \ dy\right| d\tau.
   \end{equation}
Suppose not, then for any $t_1 \in (-2^2s,-s)$, we would have 
\begin{equation}
\label{J_1_t_11}
 \left| \int_{\Om_{8\rho}} \lbr |\tu_h|^2 - |\vlh - \tu_h|^2\rbr (y,t_1) \ dy \right| > \frac{1}{s} \int_{-2^2s}^{-s} \left| \int_{\Om_{8\rho}} \lbr |\tu_h|^2 - |\vlh - \tu_h|^2 \rbr(y,\tau) \ dy\right| d\tau.
\end{equation}
But we have
\begin{equation}
\label{J_1_t_22}
 3 \hint_{-2^2s}^{-s} \left| \int_{\Om_{8\rho}} \lbr |\tu_h|^2 - |\vlh - \tu_h|^2 (y,\tau)\rbr \ dy\right| d\tau \geq 3 \min_{\tau \in (-4s,-s)} |J_1(\tau)|.
\end{equation}
Combining \eqref{J_1_t_11} and \eqref{J_1_t_22}, for any $t_1 \in (-4s,-s)$, we get $|J_1(t_1)| > 3 \min_{\tau \in (-4s,-s)} |J_1(\tau)|$,
which is absurd. Hence \eqref{6.40}  must be true for some $t_1 \in (-4s,-s)$.

From the construction \eqref{lipschitz_extension},  we have $\vlh = \tu_h$ on $\elam$. Furthermore,  $\spt(\tu_h) \subset 8Q$ and $|\tu_h| \leq |[u]_h|\lsb{\chi}{8Q}$ holds. For $t_1 \in (-2^2s,-s)$ satisfying \eqref{6.40}, we then get
\begin{equation}
 \label{6.44}
 \begin{array}{ll}
  |J_1(t_1)| 
& \leq   \frac{1}{s} \iint_{8Q\cap \elam } \left| |\tu_h|^2 - |\vlh - \tu_h|^2 \right| dz + \frac{1}{s} \iint_{8Q\setminus \elam} \left| |\tu_h|^2 - |\vlh - \tu_h|^2 \right| dz \\
& \apprle   \frac{1}{s} \iint_{8Q\cap \elam }  |\tu_h|^2  dz + \frac{1}{s} \iint_{8Q\setminus \elam}  |\tu_h|^2 + |\vlh |^2  dz \\
& \overset{\redlabel{4.8.a}{a}}{\apprle} \frac{1}{s} \iint_{8Q} |[u]_h|^2 \ dz.
 \end{array}
\end{equation}
To obtain \redref{4.8.a}{a}, we used Lemma \ref{lemma3.12} (applied with $\vartheta = 2$).

  \end{description}

  \item[Estimate for $L_2$:] 
   We decompose the expression as 
   \begin{equation}\label{4.9}
    \begin{array}{ll}
     L_2 & = \int_{t_1}^t \int_{\elam^{\tau}} \iprod{[\aa(y,\tau,\nabla u)]_h}{\nabla (\eta \vlh)} \ dy\ d\tau  + \\
     & \qquad \quad + \int_{t_1}^t \int_{\Om_{8\rho} \setminus \elam^{\tau}} \iprod{[\aa(y,\tau,\nabla u)]_h}{\nabla (\eta \vlh)} \ dy\ d\tau \\
     & := L_2^1 + L_2^2.
    \end{array}
   \end{equation}

   \begin{description}
    \item[Estimate for $L_2^2$:] Using the chain rule, \eqref{abounded}, \eqref{cut_off_function} along with Corollary \ref{corollary3.20}, we get
    \begin{equation}
    \label{4.10}
     \begin{array}{ll}
      L^2_2 
      & \leq \int_{t_1}^t \int_{\Om_{8\rho} \setminus E_{\tau}(\la)} {[|\nabla u|^{p-1}+|\th|^{p-1}]_h}{|\nabla (\eta \vlh)|} \ dy\ d\tau  \\
      & \apprle \sum_{k=0}^1  \rho^{-k+1}\iint_{(8B\times 2\tm) \setminus \elam} {[|\nabla u|^{p-1}+|\th|^{p-1}]_h}{|\nabla^k  \vlh|} \ dy\ d\tau  \\
&\apprle \la^p |\RR^{n+1} \setminus \elam| + \frac{1}{s} \iint_{8Q} |\tuh|^2 \ dy\ d\tau.
     \end{array}
    \end{equation}
\end{description} 
\end{description}

Substituting \eqref{4.10} into \eqref{4.9} and \eqref{6.39}, \eqref{6.44} into \eqref{6.38},  and finally making use of \eqref{cac1} along with the bound $|\tuh| \leq |[u]_h|\lsb{\chi}{8Q}$, we get
 \begin{equation}
 \label{6.45}
  \begin{array}{l}
    \frac12 \int_{\Om_{8\rho}} | (\tu_h)^2 - (\vlh - \tu_h)^2 | (y,t) \ dy +  \int_{t_1}^t \int_{E_{\tau}(\la)} \iprod{[\aa(y,\tau,\nabla u)]_h}{\nabla (\eta \vlh)} \ dy \ d\tau   \\
   \hspace*{6cm} \apprle \la^p |\RR^{n+1} \setminus \elam| + \frac{1}{s} \iint_{8Q} |[u]_h|^2 \ dy \ d\tau.
  \end{array}
 \end{equation}

 Since the estimate in \eqref{6.45} is independent of $h$, we can let $h \searrow 0$ to get
 \begin{equation}
 \label{6.46.1}
  \begin{array}{l}
    \frac12 \int_{\Om_{8\rho}} | (\tu)^2 - (\vl - \tu)^2 | (y,t) \ dy +  \int_{t_1}^t \int_{E_{\tau}(\la)} \iprod{\aa(y,\tau,\nabla u)}{\nabla (\eta \vl)} \ dy \ d\tau   \\
   \hspace*{6cm} \apprle \la^p |\RR^{n+1} \setminus \elam| + \frac{1}{s} \iint_{8Q} |u|^2 \ dy \ d\tau.
  \end{array}
 \end{equation}
%
%
%
  Using the fact that $\tu = \vl$ on $\elam$ and Lemma \ref{crucial_lemma},  we get
 \begin{equation}
 \label{4.13}
  \begin{array}{ll}
    \int_{\Om_{8\rho}} | (\tu)^2 - (\vl - \tu)^2 | (y,t) \ dy   
   & \apprge \int_{\elam^t} | \tu (x,t)|^2 \ dx   - \la^p |\RR^{n+1} \setminus \elam| - \frac{1}{s} \iint_{8Q} |u|^2 \ dz. 
  \end{array}
 \end{equation}
%
%
Making use of the bound \eqref{4.13} into \eqref{6.46.1}, followed by multiplying the resulting expression with $\la^{-1-\be}$ and integrating from $(c_e\al_0,\infty)$ (recall that  $c_e$ is as in Remark \ref{def_al_0_1}), for almost every $t \in (-s,s)$, we get 
 \begin{equation}
 \label{K_expression}
K_1 + K_2 \apprle K_3 + K_4,
 \end{equation}
 where we have set
 \begin{equation*}
  \begin{array}{@{}r@{}c@{}l@{}}
   K_1 \ &:=&\  \frac12\int_{c_e \al_0}^{\infty} \la^{-1-\be} \int_{\elam^t} | \tu(y,t)|^2 \ dy \ d\la, \\
  K_2 \ &:=& \ \int_{c_e \al_0}^{\infty} \la^{-1-\be}\int_{t_1}^t \int_{\elam^{\tau}} \iprod{\aa(y,\tau,\nabla u)}{\nabla (\eta u)} \ dy\ d\tau \ d\la, \\
  K_3\  &:=& \ \int_{c_e \al_0}^{\infty} \la^{-1-\be}  \la^p |\RR^{n+1} \setminus \elam| \  d\la, \\
  K_4 \ &:=& \ \frac{1}{s} \int_{c_e \al_0}^{\infty} \la^{-1-\be}   \iint_{8Q} |u|^2 \ dy  \ d\tau \ d\la .
  \end{array}
 \end{equation*}

 We now define the truncated Maximal function $\mathcal{M}(z) := \max \{ g(z), \al_0\}$ and then estimate each of the $K_i$ for $i \in \{1,2,3,4\}$ as follows:
 \begin{description}
  \item[Estimate for $K_1$:]  By applying Fubini, we get
  \begin{equation}
  \label{4.19}
   \begin{array}{ll}
    K_1 & \geq \int_{8B} |\tu(y,t)|^2 \int_{c_e g(y,t)}^{\infty} \la^{-1-\be} \ d\la \ dy 
     \apprge \frac{1}{\be c_e^{\be}}  \int_{8B} \mathcal{M}(y,t)^{-\be} | \tu(y,t)|^2 \ dy .
   \end{array}
  \end{equation}

  \item[Estimate for $K_2$:] Again applying Fubini, we get
  \begin{equation}
  \label{4.20}
   \begin{array}{ll}
    K_2 & = \frac{1}{\be c_e^{\be}} \int_{t_1}^s \int_{\Om_{8\rho}} \mathcal{M}(y,\tau)^{-\be} \iprod{\aa(y,\tau,\nabla u)}{\nabla (\eta u)} \ dy \ d\tau. 
   \end{array}
  \end{equation}
  Applying the chain rule and using \eqref{def_u_tilde} along with \eqref{abounded} and \eqref{bound_b}, we  get
  \begin{equation}
\label{4.25}
 \begin{array}{ll}
  K_2 &= \int_{t_1}^s \int_{\Om_{8\rho}} \mathcal{M}(y,\tau)^{-\be} \iprod{\aa(y,\tau,\nabla u)}{\nabla u} \eta^2  \ dy \ d\tau   \\
  & \qquad + \int_{t_1}^s \int_{\Om_{8\rho}} \mathcal{M}(y,\tau)^{-\be} \iprod{\aa(y,\tau,\nabla u)}{\nabla \eta^2 } u \ dy \ d\tau  \\
  &\apprge \int_{t_1}^s \int_{\Om_{8\rho}} \mathcal{M}(y,\tau)^{-\be} |\nabla u|^p \eta^2  \ dy \ d\tau  - \iint_{8Q} \mathcal{M}(y,\tau)^{-\be} |\th|^p   \ dy \ d\tau  \\
  & \qquad - \int_{t_1}^s \int_{\Om_{8\rho}} \mathcal{M}(y,\tau)^{-\be} \lbr |\nabla u|^{p-1} + |\th|^{p-1} \rbr \frac{|u|}{\rho} \ dy \ d\tau  \\
  & \apprge  \iint_{Q} \mathcal{M}(y,\tau)^{-\be} |\nabla u|^p \eta^2  \ dy \ d\tau  - \iint_{8Q} \mathcal{M}(y,\tau)^{-\be} |\th|^p   \ dy \ d\tau \\
  & \qquad - \iint_{8Q} \mathcal{M}(y,\tau)^{-\be} \lbr |\nabla u|^{p-1} + |\th|^{p-1} \rbr \frac{|u|}{\rho} \ dy \ d\tau  \\
  & := \aa_1 + \aa_2 + \aa_3.
 \end{array}
\end{equation}

\begin{description}
 \item[Estimate for $\aa_1$:] Note that $\eta \equiv 1$ on $B$.  Let $S :=\{ z \in Q: |\nabla u(z)| \geq \be g(z)\},$ then we get
 \begin{equation}
 \label{4.26}
  \begin{array}{@{}l@{}c@{}l@{}}
   \iint_Q |\nabla u|^{p-\be} \ dz  & =& \iint_S |\nabla u|^{p-\be} \ dz + \iint_{Q\setminus S} |\nabla u|^{p-\be} \ dz \\
   & \leq &\be^{-\be} \iint_Q \mathcal{M}(z)^{-\be} |\nabla u|^p \ dz + \be^{p-\be} \iint_{Q\setminus S} \mathcal{M}(z)^{p-\be} \ dz \\
            & \overset{\text{Lemma \ref{bound_g_x_t}}}{\apprle} & \iint_Q \mathcal{M}(z)^{-\be} |\nabla u|^p \ dz + \be^{p-\be} \iint_{8Q} |\nabla u|^{p-\be} \ dz  + \be^{p-\be} |Q| \al_0^{p-\be}.
      \end{array}
 \end{equation}
  \item[Estimate for $\aa_2$:]  From \eqref{def_g}, we obtain the bound  $\lsb{\chi}{8Q}( |\nabla u(z)| + |\th(z)|)  \leq \mathcal{M}(z)$ for a.e $z \in \RR^n$. This gives
  \begin{equation}
   \label{4.26.1}
 \aa_2 =  \iint_{8Q} \mathcal{M}(z)^{-\be} |\th|^p   \ dz \apprle  \iint_{8Q}  |\th|^{p-\be}   \ dz.
  \end{equation}

 \item[Estimate for $\aa_3$:]  We use the bound  $\lsb{\chi}{8Q}( |\nabla u(z)| + |\th(z)|)  \leq \mathcal{M}(z)$ for a.e $z \in \RR^n$, along with Young's inequality, Theorem \ref{sobolev-poincare} and  \eqref{hypothesis_1}, to get
 \begin{equation}
 \label{4.27}
  \begin{array}{rcl}
   \aa_3 & \apprle& \iint_{8Q}  (|\nabla u|+ |\th|) ^{p-1-\be} \frac{|u|}{\rho} \ dz
   \apprle  \varepsilon |Q| \ka \al_0^{p-\be} + C(\varepsilon) \iint_{8Q} \left| \frac{u}{\rho}\right|^{p-\be}\ dz.
  \end{array}
 \end{equation}

\end{description}

  \item[Estimate for $K_3$:] Applying the layer-cake representation (see for example \cite[Chapter 1]{Grafakos}), we get
  \begin{equation}
  \label{4.21}
   \begin{array}{rcl}
    K_3  & =& \frac{1}{p-\be} \iint_{\RR^{n+1}} \mathcal{M}(z)^{p-\be} \ dz
     \overset{\text{Lemma \ref{bound_g_x_t}}}{\apprle} \frac{1}{p-\be} \iint_{8Q}  \lbr |\nabla u|^{p-\be}+ |\th|^{p-\be}+\al_0^{p-\be}\rbr  \ dz \\
    & \overset{\text{\eqref{hypothesis_1}}}{\apprle} &\al_0^{p-\be} |Q|.  
   \end{array}
  \end{equation}
  \item[Estimate for $K_4$] Again applying Fubini, we get 
  \begin{equation}
  \label{4.22}
   \begin{array}{ll}
    K_4 & =\frac{1}{s} \int_{{c_e \al_0}}^{\infty} \la^{-1-\be}   \iint_{8Q} |u|^2 \ dz \ d\la  = \frac{1}{\be} \iint_{8Q}  (\al_0)^{-\be}  \frac{|u|^2}{s} \ dz.
   \end{array}
  \end{equation}
 \end{description}
%
%
Substituting \eqref{4.26}, \eqref{4.26.1} and \eqref{4.27} into \eqref{4.25} followed by combining  \eqref{4.19}, \eqref{4.21} and \eqref{4.22} into \eqref{K_expression}, we get
\begin{equation*}
\label{4.28}
 \begin{array}{ll}
 \frac{1}{2\be}  \int_{B} \mathcal{M}(y,t)^{-\be} | \tu(y,t)|^2 \ dy + \frac{1}{\be} \iint_Q |\nabla u|^{p-\be} \ dz \apprle \frac{1}{\be} \be^{p-\be} \iint_{8Q} |\nabla u|^{p-\be} \ dz +\iint_{8Q} |\th|^{p-\be} \ dz\\
    \hspace*{2cm}+ \frac{1}{\be} \varepsilon |Q| \al_0^{p-\be} + \frac{1}{\be} C(\varepsilon) \iint_{8Q} \left| \frac{u}{\rho}\right|^{p-\be}\ dz+  \al_0^{p-\be} |Q| + \frac{1}{\be} \iint_{8Q} \al_0^{-\be}  \frac{|u|^2}{s} \ dz.
 \end{array}
\end{equation*}
Multiplying the above expression  by $\be$ and then using the intrinsic scaling $s = \rho^2 \al_0^{2-p}$ from \eqref{def_s} along with \eqref{hypothesis_1}, we get 
\begin{equation*}
 \begin{array}{ll}
        \int_{B} \mathcal{M}(y,t)^{-\be} | \tu(y,t)|^2 \ dy + |Q| \al_0^{p-\be}  \apprle \be^{p-\be} |Q| \al_0^{p-\be} + \varepsilon |Q| \al_0^{p-\be} +\iint_{8Q} |\th|^{p-\be} \ dz \\
    \hspace*{2cm}  +  C(\varepsilon) \iint_{8Q} \lbr \frac{|u|}{\rho} \rbr^{p-\be} \ dz + \be \al_0^{p-\be} |Q| +  \iint_{8Q} \al_0^{p-2-\be}   \lbr \frac{|u|}{\rho}\rbr^2 \ dz.
 \end{array}
\end{equation*}

Choosing $\be \in (0,\be_0)$  and $\varepsilon \in (0,1)$ small, we then make use of  \eqref{hypothesis_1} and the observation $|Q| \approx |B| s^2 = \rho^2 \al_0^{2-p}|B|$, to get
\begin{equation*}
\label{4.31}
        \sup_{t \in \tm} \al_0^{p-2} \hint_{B} \mathcal{M}(y,t)^{-\be} \left| \frac{ \tu(y,t)}{\rho}\right|^2 \ dy +  \al_0^{p-\be}   \ \apprle \   \fiint_{8Q}\left[ \al_0^{p-2-\be}   \lbr \frac{|u|}{\rho}\rbr^2 +  \lbr \frac{|u|}{\rho} \rbr^{p-\be} + |\th|^{p-\be} \right]\ dz.
\end{equation*} 
This completes the proof of the Lemma. 
\end{proof}

\subsection{Estimates needed to prove Reverse H\"older inequality}

\begin{lemma}
\label{lemma7.4}
Let $\ka \geq 1$, then there exists $\be_0(n,p,q,\lamot,b_0,\ka)$. Let  $\be \in (0,\be_0)$ be given and consider any very weak solution $u \in L^2(-T,T;L^2(\Om)) \cap L^{p-\be}(-T,T; W_0^{1,p-\be}(\Om))$. Let $Q_{\rho,s}(0,0) = B_{\rho}(0) \times \tm_s(0)$ be the parabolic cylinder with $(0,0) \in \pa \Om \times (-T,T)$ and $s = \rho^2 \al_0^{2-p}$ for some $\al_0 >0$ as in \eqref{def_s}. Let $\al Q$ be a rescaled parabolic cylinder for some $\al \in (1,8]$ and let $p-\varepsilon_0$  be as given in \eqref{def_q} and also suppose that 
 \begin{equation}\label{hypothesis_2}
  \fiint_{ \al Q} \lbr |\nabla u|+|\th| \rbr^{p-\be} \ dz \leq \ka \al_0^{p-\be}.
 \end{equation}
 Let us define
\begin{equation}
 \label{def_J}
 J:= \sup_{t\in \tm} \hint_{ B} \lbr \frac{|u|}{\rho}\rbr^2 \tilde{g}(x,t)^{-\be} \ dx,
\end{equation}
where $\tilde{g}(z) := \max\{ \mm( |\nabla \tu|^q \lsb{\chi}{\al Q})^{\frac{1}{q}}(z), \mm(( |\nabla u|^q+|\th|^q )\lsb{\chi}{\al Q})^{\frac{1}{q}}(z), \al_0\}$.
 Then there holds:
\begin{equation}
\label{lemma4.3.1}
 \fiint_{ Q} \lbr \frac{|u|}{\rho} \rbr^{p-\be} \ dz \leq C_{(n,p,q,\lamot,b_0,\ka)} (\al_0^{\be} J)^{\frac{\varepsilon_0-\be}{2}}  \lbr \fiint_{ Q} |\nabla u|^q \ dz\rbr^{\frac{p-\varepsilon_0}{q}}  .
\end{equation}
 If $\frac{p-\varepsilon_0}{2} \geq 1$, then there exists an $\tga = \tga(n,p,\varepsilon_0) \in (0,1)$ such that the following holds:
 \begin{equation}
  \label{lemma4.3.2}
 \begin{array}{ll}
\fiint_{ Q} \lbr \frac{|u|}{\rho} \rbr^{2} \ dz & \leq C_{(n,p,q,\lamot,b_0,\ka)} (\al_0^{\be} J)^{1-\tga}\lbr  \fiint_{ Q} |\nabla u|^{q} \ dz\rbr^{\frac{2\tga}{q}} .
\end{array}
\end{equation}
 If $\frac{p-\varepsilon_0}{2} \in (0,1)$, then there holds
\begin{equation}
\label{lemma4.3.3}
 \begin{array}{ll}
\fiint_{ Q} \lbr \frac{|u|}{\rho} \rbr^{2} \ dz & \leq C_{(n,p,q,\lamot,b_0,\ka)} (\la_0^{\be} J)^{\frac{2-p+\varepsilon_0}{2}}\lbr  \fiint_{ Q} |\nabla u|^{q} \ dz\rbr^{\frac{p-\varepsilon_0}{q}} .
\end{array}
\end{equation}
\end{lemma}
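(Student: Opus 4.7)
The unified strategy is a three-term H\"older decomposition applied on each time slice, combined with the boundary Sobolev-Poincar\'e inequality (Theorem~\ref{sobolev-poincare}) applied to $u(\cdot,t)$ and the $L^{p-\be}$-bound on $\tilde{g}$ coming from the strong maximal inequality (Lemma~\ref{max_bound}) together with the hypothesis~\eqref{hypothesis_2}.

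For~\eqref{lemma4.3.1} I would write, at each fixed $t$,
\[ (|u|/\rho)^{p-\be} \;=\; \bigl[(|u|/\rho)^{2}\tilde{g}^{-\be}\bigr]^{A}\cdot (|u|/\rho)^{B}\cdot\tilde{g}^{\be A}, \qquad A:=\tfrac{\ve_0-\be}{2},\ \ B:=p-\ve_0, \]
so that $2A+B=p-\be$. H\"older with exponents $\la_1=1/A$, $\la_3=(p-\be)/(\be A)$, and $\la_2$ determined by $\tfrac{1}{\la_1}+\tfrac{1}{\la_2}+\tfrac{1}{\la_3}=1$, combined with Theorem~\ref{sobolev-poincare} on the middle factor (which applies since $\Om^c$ is $q$-thick by Theorem~\ref{self_improv_cap} and $u(\cdot,t)\in W^{1,q}_0(\Om_\rho)$ vanishes on a set of comparable $q$-capacity), yields the pointwise-in-$t$ bound
\[ \fint_B (|u|/\rho)^{p-\be}dx \;\le\; C\, J(t)^{A}\Bigl(\fint_B |\nabla u|^q dx\Bigr)^{B/q}\Bigl(\fint_B \tilde{g}^{p-\be}dx\Bigr)^{\be A/(p-\be)}, \]
where $J(t):=\fint_B(|u|/\rho)^2\tilde{g}^{-\be}dx\le J$.

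I would then integrate in $t\in\tm$ and apply H\"older in time with exponents $e_1,e_2$ chosen so that $\be A\,e_2/(p-\be)=1$. The $\tilde{g}$-factor collapses to $\fiint_{B\times\tm}\tilde{g}^{p-\be}dz$, which Lemma~\ref{max_bound} and~\eqref{hypothesis_2} bound by $C\ka\al_0^{p-\be}$, producing exactly the $\al_0^{\be A}$ needed to repackage $J^A\al_0^{\be A}=(\al_0^\be J)^A$. The gradient factor carries the exponent $Be_1/q$, which converges to $(p-\ve_0)/q<1$ as $\be\to 0$, so Jensen's inequality replaces $\fint_\tm\bigl(\fint_B|\nabla u|^q\bigr)^{Be_1/q}dt$ by $\bigl(\fiint_Q|\nabla u|^q\bigr)^{B/q}=\bigl(\fiint_Q|\nabla u|^q\bigr)^{(p-\ve_0)/q}$, and~\eqref{lemma4.3.1} follows. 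For~\eqref{lemma4.3.2} and~\eqref{lemma4.3.3} I would repeat the decomposition with $\sigma=2$ and $2A+B=2$: for~\eqref{lemma4.3.3} take $A=(2-p+\ve_0)/2$ (positive precisely because $p<2+\ve_0$) and $B=p-\ve_0$; for~\eqref{lemma4.3.2} introduce a free parameter $\tga\in(0,1)$ and set $A=1-\tga$, $B=2\tga$, fixing $\tga=\tga(n,p,\ve_0)$ so that the H\"older and Sobolev exponents stay in range.

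The main technical obstacle is the parameter compatibility. The Sobolev step demands $B\la_2\le q^{*}=nq/(n-q)$, while the three-term H\"older identity forces $1/\la_2=1-Ap/(p-\be)>0$. A direct computation shows that in the limit $\be\to 0$, $q\searrow p-\ve_0$, the inequality $B\la_2\le q^{*}$ reduces in all three cases to $(n+2)\ve_0\le 2p$; this is precisely where the hypothesis $p>2n/(n+2)$ enters, guaranteeing that both conditions are strictly satisfied for $\be_0$ (and hence the self-improved exponent $\ve_0$) chosen small enough in a manner depending only on $n,p,\lamot,b_0,\ka$. For~\eqref{lemma4.3.2} one additionally needs $\tga>\be/p$, which is compatible with $\tga$ being an $(n,p,\ve_0)$-constant once $\be_0$ is taken small.
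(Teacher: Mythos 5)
Your proposal is correct and follows essentially the same route as the paper: the paper packages the time-slice interpolation step into Lemma~\ref{interpolation_capacity_version_main} (a combination of H\"older and Theorem~\ref{sobolev-poincare}), while you unfold it as a direct three-term H\"older decomposition at each time slice followed by the boundary Sobolev--Poincar\'e estimate, but the H\"older-in-time step, the concave Jensen step, the bound $\fiint \tilde g^{p-\be}\apprle\al_0^{p-\be}$, and the resulting exponents coincide. The only small imprecision is the claim that the Sobolev admissibility condition ``reduces in all three cases to $(n+2)\ve_0\le 2p$'': for \eqref{lemma4.3.2}--\eqref{lemma4.3.3} the limiting requirement $B\la_2\le q^*$ becomes $\ve_0\le p-\frac{2n}{n+2}$, which is the stronger restriction when $p<2$, though both forms are satisfiable for $\ve_0$ small precisely because $p>\frac{2n}{n+2}$, so the conclusion is unaffected.
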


\begin{proof}
 In order to prove the lemma, we want to make use of Lemma \ref{interpolation_capacity_version_main}. Let us prove each of the assertions as follows:
 
 

\begin{description}[leftmargin=*]
 \item[Proof of \eqref{lemma4.3.1}:] In this case, in  order to apply Lemma \ref{interpolation_capacity_version_main}, we make the following choice of exponents:
 \begin{equation}
 \label{exp_ch}
\sig =  p-\be, \quad  \tr = \frac{2(p-\be)}{p}, \quad \vartheta = p-\varepsilon_0 \qquad \text{and} \qquad \tal = \frac{p-\varepsilon_0}{p-\be} < 1. 
 \end{equation}
 These choice of exponents are admissible under the additional restriction that $\varepsilon_0 \leq \max \left\{\frac{4}{n+2}, \frac14\right\}$ and  $\be_0$ is small such that $\frac{(p-\varepsilon_0)^2}{p} \geq \frac{n}{2} (\varepsilon_0-\be_0)$ holds. Thus applying Lemma \ref{interpolation_capacity_version_main}, we get
 \begin{equation}
 \label{7.14}
\fiint_Q \lbr \frac{|u|}{\rho} \rbr^{p-\be} \ dz \apprle \hint_{\tm} \lbr \hint_B |\nabla u|^{p-\varepsilon_0} \ dx \rbr \lbr \hint_B  \lbr \frac{|u|}{\rho} \rbr^{\tr} \ dx \rbr^{\frac{\varepsilon_0 - \be}{2}\frac{p}{p-\be}} dt.
\end{equation}
 Making use Lemma \ref{bound_g_x_t} along with \eqref{hypothesis_2}, we get
 \begin{equation}
\label{7.15}
 \fiint_Q \tilde{g}(z)^{p-\be} \ dz \apprle  \fiint_{\al Q} \lbr |\nabla u|+|\th| +\al_0 \rbr ^{{{p-\be}}} \ dz \apprle \al_0^{p-\be}.
\end{equation}
For almost every $t\in \al I$, using H\"older's inequality and  \eqref{def_J}, we also get 
\begin{equation}\label{7.16}
\begin{array}{ll}
 \hint_B  \lbr \frac{|u(x,t)|}{\rho} \rbr^{\tr} \ dx  & \apprle J^{\frac{\tr}{2}} \lbr \hint_B \tilde{g}(x,t)^{p-\be} \ dx \rbr^{\frac{2-\tr}{2}}.
 \end{array}
\end{equation}
Substituting \eqref{7.16} into \eqref{7.14}, followed by an application of H\"older's inequality with exponents $\frac{q}{p-\varepsilon_0}$ and $\tilde{q_0}:=\frac{q}{q-p+\varepsilon_0}$, we get
\begin{equation*}
\label{4.44}
 \fiint_Q \lbr \frac{|u|}{\rho} \rbr^{p-\be} \ dz \apprle J^{\frac{\varepsilon_0-\be}{2}}  \lbr \fiint_Q |\nabla u|^q \ dz \rbr^{\frac{p-\varepsilon_0}{q}}  \lbr \hint_{\tm} \lbr \hint_B  \tilde{g}(x,t)^{p-\be} \ dx \rbr^{\tilde{q_0} \frac{(2-\tr)(\varepsilon_0 - \be)p}{2(p-\be)}} dt\rbr^{\frac{1}{\tilde{q_0}}}.
\end{equation*}
We further restrict $\be_0 \leq \frac{p-\ve_0}{p}$, which ensures $\tilde{q_0} \frac{(2-\tr)(\varepsilon_0 - \be)p}{2(p-\be)} = q_0\frac{\be(\varepsilon_0-\be)}{2(p-\be)}<1$ holds.
Thus Jensen's inequality for concave functions becomes applicable and hence we get
\begin{equation}
\label{4.45}
 \fiint_Q \lbr \frac{|u|}{\rho} \rbr^{p-\be} \ dz \apprle J^{\frac{\varepsilon_0-\be}{2}}  \lbr \fiint_Q |\nabla u|^q \ dz \rbr^{\frac{p-\varepsilon_0}{q}}  \lbr  \fiint_Q  \tilde{g}(z)^{p-\be} \ dz \rbr^{\frac{\be(\varepsilon_0-\be)}{2(p-\be)}}.
\end{equation}
Using \eqref{7.15} in \eqref{4.45}, we get 
\begin{equation*}
\label{4.46}
 \fiint_Q \lbr \frac{|u|}{\rho} \rbr^{p-\be} \ dz \apprle (\al_0^{\be} J)^{\frac{\varepsilon_0-\be}{2}}  \lbr \fiint_Q |\nabla u|^q \ dz \rbr^{\frac{p-\varepsilon_0}{q}}  .
\end{equation*}

 \item[Proof of \eqref{lemma4.3.2}:] Note that in this case, we have $p-\ve_0 \geq 2$, hence we make the following choice of exponents to apply Lemma \ref{interpolation_capacity_version_main}:
 \begin{equation}
 \label{exp_ch_3}
\sig = 2, \quad  \tr = \frac{2(p-\be)}{p}, \quad \vartheta = p-\varepsilon_0.
 \end{equation}
 Let us now choose $\tga \in(0,1)$ so as to satisfy the following restrictions:
\begin{equation}
- \frac{n}{2} \leq \tga\lbr 1 - \frac{n}{p-\varepsilon_0} \rbr  -  \frac{(1-\tga)np}{2(p-\be)} \quad \text{and} \quad 
\frac{q (1-\tga) \be}{(q-2\tga)(p-\be)} < 1. \label{7.24}
\end{equation}
This choice of $\tga$ in \eqref{7.24} is possible, due to the following reasons.
\begin{itemize}
 \item \eqref{7.24} holds  for $\tga=1$, since $\frac{n}{p-\varepsilon_0} < 1 + \frac{n}{2}$, as we are in the case $p-\varepsilon_0 \geq 2$.
 \item Since we assumed $p-\varepsilon_0 \geq 2$, this means that $q-2\tga >0$ for $\tga < 1$. This follows from the fact that  $ q \geq p-\varepsilon_0 \geq 2 \geq 2\tga \in (0,2).$ 
\end{itemize}
Thus we can apply Lemma \ref{interpolation_capacity_version_main} with \eqref{exp_ch_3} and $\tal=\tga$ as the choice of exponents, to get
\begin{equation}
 \label{7.20}
\fiint_Q \lbr \frac{|u|}{\rho} \rbr^{2} \ dz \apprle \hint_{\tm} \lbr \hint_B |\nabla u|^{p-\varepsilon_0} \ dx \rbr^{\frac{2\tga}{p-\varepsilon_0}} \lbr \hint_B  \lbr \frac{|u|}{\rho} \rbr^{\tr} \ dx \rbr^{\frac{2(1-\tga)}{r}} dt.
\end{equation}
Since $\tr<2$, we can substitute \eqref{7.16} into \eqref{7.20} to obtain
\begin{equation}
 \label{7.22}
\fiint_Q \lbr \frac{|u|}{\rho} \rbr^{2} \ dz \apprle J^{1-\tga} \hint_{\tm} \lbr \hint_B |\nabla u|^{p-\varepsilon_0} \ dx \rbr^{\frac{2\tga}{p-\varepsilon_0}} \lbr \hint_B \tilde{g}(x,t)^{p-\be} \ dx \rbr^{\frac{(2-\tr)(1-\tga)}{\tr}} dt.
\end{equation}
We will now apply H\"older inequality with exponents $\frac{q}{2\tga}$ and $\frac{q}{q-2\tga}$, and then make use of \eqref{7.24}, which will enable us to apply Jensen's inequality. Thus we get
\begin{equation}
 \label{7.23}
 \begin{array}{ll}
\fiint_Q \lbr \frac{|u|}{\rho} \rbr^{2} \ dz 
& \apprle J^{1-\tga}\lbr  \fiint_Q |\nabla u|^{q} \ dz\rbr^{\frac{2\tga}{q}} \lbr  \fiint_Q \tilde{g}(x,t)^{p-\be} \ dz \rbr^{\frac{(2-\tr)(1-\tga)}{\tr}}.
\end{array}
\end{equation}
Further making use of \eqref{7.15} and \eqref{7.23}, we get
\begin{equation*}
 \label{7.25}
 \begin{array}{ll}
\fiint_Q \lbr \frac{|u|}{\rho} \rbr^{2} \ dz & \apprle (\al_0^{\be} J)^{1-\tga}\lbr  \fiint_Q |\nabla u|^{q} \ dz \rbr^{\frac{2\tga}{q}} .
\end{array}
\end{equation*}

 \item[Proof of \eqref{lemma4.3.3}:]
 In this case, we  take
\begin{equation*}
 \label{exp_ch_4}
\sig = 2, \quad  \tr = \frac{2(p-\be)}{p}, \quad \vartheta = p-\varepsilon_0 \qquad \text{and} \qquad \tal = \frac{p-\varepsilon_0}{2} < 1. 
 \end{equation*}
Proceeding as in the proof of \eqref{lemma4.3.1}, we get
\begin{equation*}
 \label{7.26}
 \begin{array}{ll}
\fiint_Q \lbr \frac{|u|}{\rho} \rbr^{2} \ dz & \apprle (\al_0^{\be} J)^{\frac{2-p+\varepsilon_0}{2}}\lbr  \fiint_Q |\nabla u|^{q} \ dz \rbr^{\frac{p-\varepsilon_0}{q}} .
\end{array}
\end{equation*}
\end{description}

\end{proof}

\begin{lemma}
\label{lemma6.2}
 Let $\frac{2n}{n+2} < p < 2+\be$.  Then under the assumptions of Lemma \ref{Caccioppoli}, there holds
 \[
  \fiint_Q \lbr \frac{|u|}{\rho} \rbr^2 \ dz \leq  C_{(n,p,q,\lamot,b_0,\ka)}\al_0^2. 
 \]
\end{lemma}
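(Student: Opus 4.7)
The plan is to combine the Caccioppoli inequality of Lemma \ref{Caccioppoli} with the singular-regime interpolation estimate \eqref{lemma4.3.3} of Lemma \ref{lemma7.4}, and then close the resulting self-improving inequality by an iteration on nested parabolic cylinders. Note that since $\be<\be_0\leq\ve_0/2$, the assumption $p<2+\be$ forces $p-\ve_0<2$, so that \eqref{lemma4.3.3} is indeed the correct tool rather than \eqref{lemma4.3.1} or \eqref{lemma4.3.2}.

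I would start from the conclusion of Lemma \ref{Caccioppoli}. On its right hand side, the term $\fiint_{8Q}(|u|/\rho)^{p-\be}\,dz$ is controlled by $\fiint_{8Q}|\nabla u|^{p-\be}\,dz$ via the boundary Sobolev--Poincar\'e inequality of Theorem \ref{sobolev-poincare} (applicable since $u$ vanishes on $\pa\Om$ and $\Om^c$ is uniformly $p$-thick), and this together with $\fiint_{8Q}|\th|^{p-\be}\,dz$ is bounded by $C\al_0^{p-\be}$ thanks to \eqref{hypothesis_1}. After multiplying through by $\al_0^{2-p+\be}$, this yields
\[
\al_0^\be J_{\mathcal{M}} \leq C\left(\fiint_{8Q}\left(\frac{|u|}{\rho}\right)^2 dz + \al_0^2\right),
\]
where $J_{\mathcal{M}}:=\sup_{t\in\tm}\hint_B(|u|/\rho)^2\mathcal{M}^{-\be}\,dx$. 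I would then invoke Lemma \ref{lemma7.4} with $\al=8$, so that the maximal function $\tilde g$ appearing in \eqref{def_J} is comparable with $\mathcal{M}$. Using H\"older and \eqref{hypothesis_1} to get $(\fiint_Q|\nabla u|^q\,dz)^{(p-\ve_0)/q}\leq C\al_0^{p-\ve_0}$, estimate \eqref{lemma4.3.3} then delivers
\[
\fiint_Q\left(\frac{|u|}{\rho}\right)^2 dz \leq C(\al_0^\be J)^{(2-p+\ve_0)/2}\al_0^{p-\ve_0}.
\]
Feeding the Caccioppoli bound into this inequality and applying Young with conjugate exponents $2/(2-p+\ve_0)$ and $2/(p-\ve_0)$ (the choice that makes all $\al_0$ exponents collapse to exactly $2$), I arrive at the self-improving inequality
\[
\fiint_Q\left(\frac{|u|}{\rho}\right)^2 dz \leq \eps\fiint_{8Q}\left(\frac{|u|}{\rho}\right)^2 dz + C(\eps)\al_0^2,
\]
valid for arbitrary $\eps\in(0,1)$.

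To close the gap between $Q$ and $8Q$, I would redo the entire chain with a generic pair of nested cylinders $Q_{r_1}\subset Q_{r_2}$ for $\rho\leq r_1<r_2\leq 8\rho$ in place of $Q\subset 8Q$. The only modification needed is to replace the cutoff $\eta$ supported in $8B$ in the proof of Lemma \ref{Caccioppoli} by one supported in $B_{r_2}$ and equal to one on $B_{r_1}$; this introduces a factor of the form $\rho/(r_2-r_1)$ (raised to some power $\tal$ depending on $n,p$) in all constants. Setting $\Phi(r):=\fiint_{Q_{r,\al_0^{2-p}r^2}}(|u|/\rho)^2\,dz$, the resulting inequality reads
\[
\Phi(r_1) \leq \eps\,\Phi(r_2) + \frac{C(\eps)\,\rho^{\tal}}{(r_2-r_1)^{\tal}}\,\al_0^2,
\]
and choosing $\eps$ small enough, Lemma \ref{iteration_lemma} immediately yields $\Phi(\rho)\leq C\al_0^2$. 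The main technical obstacle is precisely this last step: carefully tracking the cutoff-dependent constants through both the Caccioppoli of Lemma \ref{Caccioppoli} and the interpolation chain of Lemma \ref{lemma7.4} so that the iteration hypothesis is genuinely met; the algebraic backbone of the argument is, however, already fixed by the two displayed inequalities above.
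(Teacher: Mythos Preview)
Your proposal is correct and follows essentially the same route as the paper: combine a rescaled Caccioppoli inequality with the singular-case interpolation \eqref{lemma4.3.3}, apply Young's inequality to obtain a self-improving bound of the form $\Phi(r_1)\leq \tfrac12\Phi(r_2)+C(\tal_2-\tal_1)^{-\tau}\al_0^2$, and close with Lemma~\ref{iteration_lemma}. The only cosmetic difference is that the paper disposes of the term $\fiint(|u|/\rho)^{p-\be}$ in \eqref{conclusion_1_rescaled} via Young's inequality (using $p-\be<2$) rather than via Sobolev--Poincar\'e as you do; either works and leads to the same iteration.
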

\begin{proof}
  Let $1 \leq \tal_1 < \tal_2 \leq 8$ be any two fixed numbers. We can then apply Lemma \ref{lemma7.4} to get $\tga \in (0,1)$ such that the following estimate holds:
  \begin{equation}
  \label{eq7.31}
\fiint_{ \tal_1 Q} \lbr \frac{|u|}{\rho} \rbr^{2} \ dz  \apprle (\al_0^{\be} J)^{1-\tga}\lbr  \fiint_{ \tal_2Q} |\nabla u|^{q} \ dz\rbr^{\frac{2\tga}{q}}, 
\end{equation}
where $J:= \sup_{t \in \tal_1\tm} \hint_{\tal_1 B} \lbr \frac{|u|}{\rho} \rbr^2 \tilde{g}(x,t)^{-\be} \ dx $ with 
$$\tilde{g}(z) := \max\left\{ \mm( |\nabla \tu|^q )^{\frac{1}{q}}(z), \mm( \lbr |\nabla u|^q+|\th|^q\rbr \lsb{\chi}{\tal_2 Q})^{\frac{1}{q}}(z), \al_0\right\}$$ 
where we have set $\tu(x,t) = u(x,t) \eta(x)\zeta(t)$ for  $\eta(x) \zeta(t) \in C_c^{\infty}(\tal_2 Q)$ with $\eta(x) \zeta(t) \equiv 1$ for  $(x,t) \in \tal_1Q$ instead of \eqref{cut_off_function}.

We can now make  use of Lemma \ref{Caccioppoli} and  obtain the following estimate for some $ \be \in (0,\be_0]$:
 \begin{equation}
  \label{conclusion_1_rescaled}
  \begin{array}{ll}
  \al_0^{p-\be} + \al_0^{p-2} \sup_{t \in\tal_1\tm} \hint_{\tal_1B} \left| \frac{u(x,t)}{\rho} \right|^2 \tilde{g}(x,t)^{-\be} \ dx &\leq \cac \fiint_{\tal_2Q}  \al_0^{p-2-\be} \left| \frac{u}{(\tal_2-\tal_1)\rho} \right|^2\ dz \\
  &\quad +\fiint_{\tal_2Q}  \left[\left| \frac{u}{(\tal_2-\tal_1)\rho} \right|^{p-\be} + |\th|^{p-\be}\right] \ dz.
  \end{array}
 \end{equation}

In particular, \eqref{conclusion_1_rescaled} implies
\begin{equation}
 \label{eq7.32}
\al_0^{\be} J \apprle  \fiint_{\tal_2Q}\left[   \left| \frac{u}{(\tal_2-\tal_1)\rho} \right|^2  + \al_0^{2-p+\be} \left| \frac{u}{(\tal_2-\tal_1)\rho} \right|^{p-\be} + |\th|^{p-\be} \right]dz.
\end{equation}

We now apply Young's inequality to the middle term on the right hand side of \eqref{eq7.32} (possible since $p-\be < 2$) and make use of \eqref{hypothesis_1}, to get 
\begin{equation}
 \label{eq7.34}
 \al_0^{\be} J \apprle  \fiint_{\tal_2Q}   \left| \frac{u}{(\tal_2-\tal_1)\rho} \right|^2 \ dz+  \al_0^2.
\end{equation}

 Substituting \eqref{eq7.34} into \eqref{eq7.31} followed by 
applying Young's inequality with exponent $\frac{1}{1-\tga} $ and $\frac{1}{\tga}$, along with  using  \eqref{hypothesis_1} and the restriction $\tal_1,\tal_2 \in [1,8]$, we get
\begin{equation}
  \label{eq7.36}
  \begin{array}{ll}
   \fiint_{ \tal_1 Q} \lbr \frac{u}{\rho} \rbr^{2} \ dz  
   & \leq \frac{1}{2} \fiint_{\tal_2 Q}   \left( \frac{u}{\rho} \right)^2   dz + C\al_0^2 (\tal_2-\tal_1)^{-\frac{2(1-\tal)}{\tal}} + C \al_0^2 (\tal_2 - \tal_1)^2 \\
   &\leq  \frac{1}{2} \fiint_{\tal_2 Q}   \left( \frac{u}{\rho} \right)^2   dz + C \al_0^2 (\tal_2-\tal_1)^{-\frac{2(1-\tal)}{\tal}} + C \al_0^2.
  \end{array}
 \end{equation}

 If we set $\varpi(a) := \fiint_{a Q}   \left| u \right|^2   dz$, then we see that for $\tal_1=:a < b:=\tal_2$,  \eqref{eq7.36} can be rewritten as 
 \[
  \varpi(a) \leq \frac12 \varpi(b) + C\al_0^2 \rho^2 (b-a)^{-\frac{2(1-\tal)}{\tal}} + C\al_0^2 \rho^2.
 \]
We can now apply Lemma \ref{iteration_lemma} along with the hypothesis $ \tal_1, \tal_2 \in [1,8]$ to get 
\[
 \fiint_{ Q}   \left| u \right|^2   dz \apprle  \al_0^2 \rho^2.
\]

This completes the proof of the lemma. 
\end{proof}

\subsection{Reverse H\"older inequality}

\begin{lemma}
 \label{reverse_holder_inequality}
 Let $\ka \geq 1$.  Then there exists  $\be_0(n,p,q,\lamot,b_0,\ka)$ and $C{(n,p,q,\lamot,b_0,\ka)}$  such that the following holds: Let  $u \in L^2(-T,T;L^2(\Om))\cap L^{p-\be}(-T,T;W_0^{1,p-\be}(\Om))$ for some $\be \in (0,\be_0)$ be a very weak solution of \eqref{main}. Furthermore, let  $Q=Q_{\rho,s}$ be a parabolic cylinder with $s = \al_0^{2-p} \rho^2$ as in \eqref{def_s} be given. Suppose the following bounds hold:
 \begin{gather}
  \frac{\al_0^{p-\be}}{\ka} \leq \fiint_Q (|\nabla u|+|\th|)^{p-\be} \ dz \qquad \text{and} \qquad   \fiint_{8^3Q} (|\nabla u|+|\th|)^{p-\be} \ dz \leq \ka \al_0^{p-\be}. \label{hyp_2}
 \end{gather}
Then the following conclusion holds:
\begin{equation*}
 \label{conclusion_2}
 \al_0^{p-\be} \leq C \lbr \fiint_{8Q} |\nabla u|^q \ dz \rbr^{\frac{p-\be}{q}} + \fiint_{8Q} |\th|^{p-\be} \ dz.
\end{equation*}
\end{lemma}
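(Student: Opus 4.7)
The strategy is to chain the boundary Caccioppoli inequality (Lemma \ref{Caccioppoli}) with the interpolation estimates of Lemma \ref{lemma7.4}, and then to absorb the resulting error terms via weighted Young's inequality.

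First, I would apply Lemma \ref{Caccioppoli} on the cylinder $8Q$. The required hypothesis \eqref{hypothesis_1} (lower bound on $\fiint_{8Q}$ and upper bound on $\fiint_{64Q}$) follows from \eqref{hyp_2} on $8^3 Q$ after enlarging the constant $\ka$ by a dimensional factor. This produces
\[
\al_0^{p-\be} + \al_0^{p-2}J \leq C \fiint_{64 Q}\lbr \al_0^{p-2-\be}\left|\frac{u}{\rho}\right|^2 + \left|\frac{u}{\rho}\right|^{p-\be} + |\th|^{p-\be} \rbr dz,
\]
with $J := \sup_{t}\hint_{8B}|u(x,t)/\rho|^2\,\mathcal{M}(x,t)^{-\be}\,dx$ and $\mathcal{M}=\max\{g,\al_0\}$ as in Lemma \ref{Caccioppoli}. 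Combining this with Theorem \ref{sobolev-poincare} (applicable near $\pa\Om$ by $p$-thickness) and \eqref{hyp_2} to bound $\fiint|u/\rho|^{p-\be}\leq C\al_0^{p-\be}$, together with Lemma \ref{lemma6.2} when $p<2+\be$ and pointwise Young's inequality $\al_0^{p-2-\be}|u/\rho|^2\leq \al_0^{p-\be}+|u/\rho|^{p-\be}$ when $p\geq 2+\be$, yields the crude bound $\al_0^\be J\leq C\al_0^2$, which is the essential input for the interpolation step.

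Next, I would apply Lemma \ref{lemma7.4} on $8Q$ with enlargement $\al\cdot 8Q\subset 8^3 Q$ for some $\al\in(1,8]$, so that \eqref{hypothesis_2} is provided by \eqref{hyp_2}. The estimate \eqref{lemma4.3.1}, combined with $\al_0^\be J\leq C\al_0^2$, gives
\[
\fiint_{8Q}\left|\frac{u}{\rho}\right|^{p-\be} dz \leq C\,\al_0^{\ve_0-\be}\lbr \fiint_{8Q}|\nabla u|^q\,dz\rbr^{(p-\ve_0)/q},
\]
while \eqref{lemma4.3.2} (in case $p-\ve_0\geq 2$) or \eqref{lemma4.3.3} (in case $p-\ve_0<2$) yields
\[
\al_0^{p-2-\be}\fiint_{8Q}\left|\frac{u}{\rho}\right|^2 dz \leq C\,\al_0^{p-\be-2\tga}\lbr\fiint_{8Q}|\nabla u|^q\,dz\rbr^{2\tga/q}
\]
(or a version with $(p-\ve_0)/q$ in the other case), for a suitable $\tga\in(0,1)$ produced by Lemma \ref{lemma7.4}.

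Finally, I apply the weighted Young inequality $a^\theta b^{1-\theta}\leq \eta a + C_\eta b$ with $a=\al_0^{p-\be}$ and $b=(\fiint_{8Q}|\nabla u|^q\,dz)^{(p-\be)/q}$. Each estimate above matches the form $a^\theta b^{1-\theta}$ for a suitable $\theta$: the $|u/\rho|^{p-\be}$ bound corresponds to $\theta=(\ve_0-\be)/(p-\be)$, whereas the $|u/\rho|^2$ bound (when $p-\ve_0\geq 2$) corresponds to $\theta=1-2\tga/(p-\be)$. Each term is therefore dominated by $\eta\al_0^{p-\be}+C_\eta(\fiint_{8Q}|\nabla u|^q\,dz)^{(p-\be)/q}$, and inserting these bounds back into the Caccioppoli estimate, choosing $\eta$ small and absorbing $\eta\al_0^{p-\be}$ into the left-hand side, produces the claimed reverse H\"older inequality. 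The main obstacle is the algebraic verification that the interpolation exponents conspire to land exactly on the target power $(p-\be)/q$; this requires precise control of $\tal$ and $\tga$ in Lemma \ref{lemma7.4} and a further restriction of $\be_0$ so that the admissibility conditions on $\tal\sig$, $\tq$, and $\vartheta^{*}$ in Lemma \ref{interpolation_capacity_version_main} remain in force throughout the case split $p\gtreqless 2+\be$ and $p-\ve_0\gtreqless 2$.
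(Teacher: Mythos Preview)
Your proposal is correct and follows essentially the same route as the paper: apply Lemma~\ref{Caccioppoli} to obtain both the $\al_0^{p-\be}$ bound and the control $\al_0^{\be}J \apprle \al_0^2$ (splitting into $p-\be \gtrless 2$ and using Lemma~\ref{lemma6.2} in the singular case), feed this into Lemma~\ref{lemma7.4}, and close by Young's inequality. The only bookkeeping slip is that your Caccioppoli on $8Q$ produces right-hand side integrals over $64Q=8^2Q$, so Lemma~\ref{lemma7.4} should be invoked at scale $8^2Q$ (not $8Q$) to match; the paper does this and its final estimate correspondingly lands on $8^2Q$ rather than the $8Q$ appearing in the statement, but this is harmless for the application in Section~\ref{main_estimates}.
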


\begin{proof}
 Since all the hypothesis of  Lemma \ref{Caccioppoli} (note the scaling factor is $8^2$ instead of $8$ as in Lemma \ref{cac1}) is satisfied, we  have
 \begin{equation}
\label{eq7.41}
  \al_0^{p-\be}  \apprle \fiint_{8^2Q}\lbr  \al_0^{p-2-\be} \left| \frac{u}{\rho} \right|^2 + \left| \frac{u}{\rho} \right|^{p-\be} + |\th|^{p-\be}\rbr dz.
 \end{equation}

 Let $\tilde{g}(z) := \max\left\{ \mm( |\nabla \tu|^q )^{\frac{1}{q}}(z), \mm(( |\nabla u|^q |\th|^q)\lsb{\chi}{8^2 Q})^{\frac{1}{q}}(z),\al_0\right\}$, where $\tu(x,t):= u(x,t) \eta(x) \zeta(t)$ with $\eta(x) \zeta(t) \in C_c^{\infty}(8^2Q)$ satisfying $\eta(x) \zeta(t) \equiv 1$ for $(x,t) \in 8Q$.  Let us set 
 \begin{equation*}
  \label{def_J_new}
  J= \sup_{t \in8\tm} \hint_{8B} \left| \frac{u(x,t)}{\rho} \right|^2 \tilde{g}(x,t)^{-\be} \ dx.
 \end{equation*}

Applying a rescaled version of Lemma \ref{Caccioppoli}, we have
\begin{equation}
 \label{J_bound}
 \al_0^{p-2} J \apprle \fiint_{8^2Q}\lbr  \al_0^{p-2-\be} \left| \frac{u}{\rho} \right|^2 + \left| \frac{u}{\rho} \right|^{p-\be}+|\th|^{p-\be}\rbr dz.
\end{equation}

 We now have the following three bounds:
 \begin{itemize}
  \item In the case $2 \geq p-\be$, we can apply Lemma \ref{lemma6.2} (this is possible since \eqref{hyp_2} is on $8^3Q$) to get 
  \begin{equation}
   \label{eest_1}
   \fiint_{8^2Q} \lbr \frac{|u|}{\rho} \rbr^2 \ dz \apprle \al_0^2.
  \end{equation}
  \item In the case $2 \leq p-\be$, for  $\sig \in \{2,p-\be\}$,  we can apply H\"older's inequality and Theorem \ref{sobolev-poincare}  along with \eqref{hyp_2}, to get
\begin{equation}
 \label{eest_2}
 \fiint_{8Q} \lbr \frac{|u|}{\rho} \rbr^{\sig} \ dz  \leq \lbr \fiint_{8Q} \lbr \frac{|u|}{\rho} \rbr^{p-\be} \ dz\rbr^{\frac{\sig}{p-\be}} \apprle  \lbr \fiint_{8Q} |\nabla u|^{p-\be} \ dz\rbr^{\frac{\sig}{p-\be}} {\apprle} \al_0^{\sig}.
\end{equation}
%

 \end{itemize}
 
 Substituting \eqref{eest_1} and \eqref{eest_2}  into \eqref{J_bound} gives
  \begin{equation}
\label{eq7.42}
 \al^{\be} J \apprle \al_0^2. 
 \end{equation}

Substituting \eqref{eq7.42} into a rescaled version of Lemma \ref{lemma7.4} and applying Young's inequality followed by substituting the resulting estimate into \eqref{eq7.41}, we get
%
%
%
%
%
%
%
%
%
%
\begin{equation*}
 \begin{array}{@{}r@{}c@{}l@{}}
 \al_0^{p-\be} & \apprle &\al_0^{p-2-\be}\al_0^{2(1-\tga)}\lbr  \fiint_{ 8^2Q} |\nabla u|^{q} \ dz\rbr^{\frac{2\tga}{q}} + \al_0^{\frac{\varepsilon_0-\be}{2}}  \lbr \fiint_{8^2 Q} |\nabla u|^q \ dz \rbr^{\frac{p-\varepsilon_0}{q}}  + \fiint_{8^2Q} |\th|^{p-\be} \ dz \\
 &\apprle & \varepsilon_1 \al_0^{p-\be} + C(\varepsilon_1) \lbr  \fiint_{ 8^2Q} |\nabla u|^{q} \ dz\rbr^{\frac{p-\be}{q}} +  C(\varepsilon_1) \lbr  \fiint_{ 8^2Q} |\nabla u|^{q} \ dz \rbr^{\frac{p-\be}{q}} +  \fiint_{8^2Q} |\th|^{p-\be} \ dz .
 \end{array}
\end{equation*}
Choosing $\varepsilon_1$ small enough, we get the desired conclusion. This completes the proof of the lemma. 
\end{proof}

\section{Proof of the main result}\label{main_estimates}

The proof of Theorem \ref{main_theorem} now follows exactly as in proof of \cite[Theorem 2.1]{Bog}. For the sake of completeness, we present the details below. Without loss of generality, we can assume $\rho = 1$ and $z_0 = (0,0) \in \pa \Om \times (-T,T)$. We take $Q_1 := Q_{(0,0)}(1,1) $ and  $Q_2 := Q_{(0,0)}(2,2^2)$
and for any  $z \in Q_2$, define the parabolic distance of $z$ to $\pa Q_2$ by 
\begin{gather*}
d_p(z):= \inf_{ \tilde{z} \in \RR^{n+1} \setminus Q_2} \min \{ |x-\tilde{x}| , \sqrt{|t-\tilde{t}|} \} \label{par_dist_boundary}. 
\end{gather*}

Furthermore, let $\be_0$ be the constant from Lemma \ref{reverse_holder_inequality} and $\be \in (0,\be_0]$ be such that $p-\be > \frac{2n}{n+2}$. For $z \in Q_2$, let us define the following function:
\begin{gather}
  \psi(z) := |\nabla u(z)|+ |\th(z)| \quad \text{and} \quad f(z) := d_p^{\al} (z) \psi(z) \quad \text{with} \ \al:= \frac{n+2}{d}, \label{def_g_f} 
\end{gather}
where $d$ is as defined in \eqref{de_d}. 
Finally  we define $\al_0$ to be
\begin{gather}
 \al_0^d := \fiint_{Q_2} \psi(z)^{p-\be} \ dz + 1. \label{def_al_0}
\end{gather}
Let $\la$ be any number such that 
\begin{gather}
 \la \geq b^{\frac{1}{d}} \al_0 \qquad \text{where} \ b:= 2^{10(n+2)}.\label{5.5}
\end{gather}
Now suppose that $\mfz \in Q_2$ with $f(\mfz)>0$, then let us denote the parabolic distance of $\mfz$ to $\pa Q_2$ by $r_{\mfz} := d_p(\mfz) $
and define the intrinsic scaling factor as
\begin{gather}
 \ga = \ga(\mfz) := (r_{\mfz}^{-\al} \la )^{2-p} = (d_p(\mfz)^{-\al} \la)^{2-p} \label{intrinsic_scaling}.
\end{gather}

In order to prove higher integrability, we want to apply Lemma \ref{gehring_lemma}. So the rest of the proof is devoted to ensuring that all the hypotheses of Lemma \ref{gehring_lemma} are satisfied.
\begin{description}[leftmargin=*]
 \item[Case $p \geq 2$:]  Let us note that $r_{\mfz}^{\al} \leq 2^{\al} \leq b^{\frac1{d}} \al_0 \leq \la$, which implies $\ga = (r_{\mfz}^{-\al} \la)^{2-p} \leq 1$. Hence we shall consider intrinsic cylinders of the type $Q_{\mfz}(R, \ga R^2)$ with $0 < R \leq r_{\mfz}$.

 In order to apply Lemma \ref{gehring_lemma}, we need to find an appropriate intrinsic parabolic cylinder around $\mfz$ on which all the hypotheses of Lemma \ref{reverse_holder_inequality} are satisfied. 
 In order to do this, let us first take $R$ such that $r_{\mfz} \leq 2^9 R < 2^9 r_{\mfz}$. In this case, there holds:
 \begin{equation}
  \label{5.8}
  \begin{array}{ll}
   \fiint_{Q_{\mfz}(R,\ga R^2)} \psi(z)^{p-\be} \ dz & \leq \frac{|Q_2|}{|{Q_{\mfz}(R,\ga R^2)}|} \fiint_{Q_{2}} \psi(z)^{p-\be} \ dz \\
   &\overset{\text{\eqref{def_al_0}}}{=} \frac{2^{n+2}}{R^{n+2} \ga}\al_0^d
   \overset{\text{\eqref{5.5}}}{\leq} \frac{2^{10(n+2)}}{r_{\mfz}^{n+2}} \frac{\la^d}{b} 
    \overset{\text{\eqref{intrinsic_scaling}}}{=} (r_{\mfz}^{-\al} \la)^{p-\be}.
  \end{array}
 \end{equation}

 Furthermore, by Lebesgue differentiation theorem, for every $\mfz \in Q_2$ with $f(\mfz) > \la$, there holds
 \begin{equation}
  \label{5.9}
  \lim_{r\searrow 0} \fiint_{Q_{\mfz}(r,\ga r^2)} \psi(z)^{p-\be} \ dz = \psi(\mfz)^{p-\be} \overset{\text{\eqref{def_g_f}\eqref{intrinsic_scaling}}}{=} (r_{\mfz}^{-\al} f(\mfz))^{p-\be} >  (r_{\mfz}^{-\al} \la)^{p-\be} .
 \end{equation}

 Thus from \eqref{5.8} and \eqref{5.9}, we observe that there should exist  $\rho \in \lbr0,\frac{r_{\mfz}}{2^9}\rbr$, such that 
 \begin{equation*}
  \label{def_rho}
  \begin{array}{c}
  \fiint_{Q_{\mfz}(\rho, \ga \rho^2)} \psi(z)^{p-\be} \ dz = (r_{\mfz}^{-\al} \la)^{p-\be}, \\
  \fiint_{Q_{\mfz}(R, \ga R^2)} \psi(z)^{p-\be} \ dz \leq (r_{\mfz}^{-\al} \la)^{p-\be},  \quad \ \forall\   R \in [\rho, r_{\mfz}].
  \end{array}
 \end{equation*}

  We now set $Q:= Q_{\mfz}(\rho, \ga \rho^2)$, then $2^9Q \subset Q_2$, thus all hypotheses of Lemma \ref{reverse_holder_inequality} are satisfied with $(r_{\mfz}^{-\al} \la, 1)$ instead of $(\al_0,\kappa)$, i.e., the following holds:
  \begin{equation}
  \label{5.11}
   (r_{\mfz}^{-\al} \la)^{p-\be}  = \fiint_Q \psi(z)^{p-\be} \ dz \qquad \text{and} \qquad \fiint_{2^8Q} \psi(z)^{p-\be} \ dz \apprle (r_{\mfz}^{-\al} \la)^{p-\be}.
  \end{equation}

Thus we can apply  Lemma \ref{reverse_holder_inequality}, which gives
\begin{equation}
 \label{5.12}
 (r_{\mfz}^{-\al} \la)^{p-\be} \apprle \lbr \fiint_{2^8Q} |\nabla u|^q\ \ dz \rbr^{\frac{p-\be}{q}} + \fiint_{2^8Q} |\th|^{p-\be} \ dz.
\end{equation}

 Since $2^9 \rho \leq r_{\mfz}$ and $\ga \leq 1$, we also have for all $z \in 2^8Q$ that 
 \begin{gather}
  d_p(z) \leq \min\{ r_{\mfz} + 2^8 \rho, \sqrt{r_{\mfz}^2 + \ga (2^8 \rho)^2} \} \leq \frac32 r_{\mfz}, \label{up_bound_dpz} \\
  d_p(z) \geq \min\{ r_{\mfz} - 2^8 \rho, \sqrt{r_{\mfz}^2 - \ga (2^8 \rho)^2} \} \geq \frac12 r_{\mfz}. \label{low_bound_dpz}
 \end{gather}

 Now substituting \eqref{up_bound_dpz} and \eqref{low_bound_dpz} into \eqref{def_g_f}, we find
 \begin{equation}
  \label{5.15}
  c^{-1} f(z) \leq r_{\mfz}^{\al} \psi(z) \leq c f(z), \quad \forall \ z \in 2^8Q \ \text{ with  } \ c = c(n)>1.
 \end{equation}

 We now claim the following estimate holds:
 \begin{equation}
  \label{5.16}
  \la^{p-\be} \overset{\text{\redlabel{5.16.a}{a}}}{\apprle} \fiint_{2^8Q} f(z)^{p-\be} \ dz \overset{\text{\redlabel{5.16.b}{b}}}{\apprle} \lbr \fiint_{2^8Q} f(z)^q \ dz \rbr^{\frac{p-\be}{q}} + \fiint_{2^8Q} (r_{\mfz}^{\al} |\th(z)|)^{p-\be} \ dz \overset{\text{\redlabel{5.16.c}{c}}}{\apprle} \la^{p-\be}.
 \end{equation}
 \begin{description}[leftmargin=*]
  \item[Estimate \redref{5.16.a}{a}:] This follows easily by making use of \eqref{5.11} and \eqref{5.15} and subsequently enlarging the parabolic cylinder $Q$. 
  \item[Estimate \redref{5.16.b}{b}:] This is obtained by the following chain of estimates:
  \begin{equation*}
   \begin{array}{lll}
    \fiint_{2^8Q} f(z)^{p-\be} \ dz & \overset{\text{\eqref{5.15}}}{\apprle} & r_{\mfz}^{\al(p-\be)} \fiint_{2^8Q} \psi(z)^{p-\be} \ dz \overset{\text{\eqref{5.11}}}{\apprle} \la^{p-\be} \\
    & \overset{\text{\eqref{5.12}}}{\apprle} &  r_{\mfz}^{\al(p-\be)} \lbr \fiint_{2^8Q} |\nabla u|^q \ dz \rbr^{\frac{p-\be}{q}}  + r_{\mfz}^{\al(p-\be)} \fiint_{2^8Q} |\th(z)|^{p-\be} \ dz \\
    & \overset{\text{\eqref{5.15}}}{\apprle} & \lbr \fiint_{2^8Q} f(z)^q \ dz \rbr^{\frac{p-\be}{q}} + \fiint_{2^8Q} (r_{\mfz}^{\al} |\th(z)|)^{p-\be} \ dz.
   \end{array}
  \end{equation*}

  \item[Estimate \redref{5.16.c}{c}:] This follows by applying Jensen's inequality (since $q < p-\be$) along with the bound \eqref{5.11} to get:
  \begin{equation*}
   \lbr \fiint_{2^8Q} f(z)^q \ dz \rbr^{\frac{p-\be}{q}} + \fiint_{2^8Q} (r_{\mfz}^{\al} |\th(z)|)^{p-\be} \ dz \overset{\text{\eqref{5.15}}}{\apprle}  \fiint_{2^8Q} f(z)^{p-\be} \ dz \overset{\text{\eqref{5.11}}}{\apprle} \la^{p-\be}.
  \end{equation*}
 \end{description}

 Thus  \eqref{5.16} holds and  as a consequence, we can apply Lemma \ref{gehring_lemma} over $2^8Q$ to see that  for any $\be \in (0,\be_0]$, there exists  $\de_0 = \de_0(n,p,b_0,\varepsilon_0,\lamot)>0$ such that $f \in L_{loc}^{p-\be+\de_1}(Q_2)$ with $\de_1 = \min\{ \de_0, \tilde{p}-p+\be\}$. This is quantified by the estimate:
\begin{equation*}
 \label{5.19}
 \iint_{Q_2} f(z)^{p-\be+\de} \ dz \apprle \al_0^{\de} \iint_{Q_2} f(z)^{p-\be} \ dz + \iint_{Q_2} (r_{\mfz}^{\al} |\th(z)|)^{p-\be+\de} \ dz\qquad \forall \de \in (0,\de_1].
\end{equation*}

By iterating the previous arguments, for any $\be \in (0,\varepsilon_{\text{geh}}]$ where $\varepsilon_{\text{geh}}>0$ is the gain in higher integrability coming from Lemma \ref{gehring_lemma}, we obtain the bound
\begin{equation}
 \label{5.20}
 \iint_{Q_2} f(z)^{p} \ dz \apprle \al_0^{\be} \iint_{Q_2} f(z)^{p-\be} \ dz + \fiint_{Q_2} |\th(z)|^p \ dz.
\end{equation}

For any $z \in Q_1$, we have $d_p(z) \geq \min \{ 1 , \sqrt{3}\} \geq 1$, $\frac{|Q_2|}{|Q_1|} = C(n)$, which implies the following bounds  hold:
\begin{gather}
 |\nabla u (z)| \leq \psi(z) \leq f(z) \qquad \forall\  z \in Q_1, \label{5.21}\\
 \fiint_{Q_1} |\nabla u|^{p+\be} \ dz \apprle_n \fiint_{Q_2} f(z)^p \ dz ,\label{5.22}\\
 f(z) \leq 2^{\al} \psi(z) \qquad \forall z \in Q_2,  \quad \text{since} \ d_p(z) \leq 2.\label{5.23}
\end{gather}

Using \eqref{5.21}, \eqref{5.22}, \eqref{5.23} along with \eqref{5.20} and making use of \eqref{def_g_f} and \eqref{def_al_0}, we get
\begin{equation*}
 \fiint_{Q_1} |\nabla u|^p \ dz \apprle \al_0^{\be} \fiint_{Q_2} \psi(z)^{p-\be} \ dz  + \fiint_{Q_2} \th^p \ dz \apprle \lbr \fiint_{Q_2} \lbr |\nabla u| + |\th| \rbr^{p-\be} \ dz \rbr^{1+\frac{\be}{d}} + \fiint_{Q_2} (1+\th^p) \ dz.
\end{equation*}
This proves the asserted estimate.

 \item[Case $\frac{2n}{n+2} <p < 2$:] The basic change with respect to the case $p\geq 2$ is that, we now switch to the sub-quadratic scaling, i.e., we consider intrinsic cylinders of the type $Q_{\mfz}(\ga^{-\frac12}R,R^2)$. 
 
 The parameter $\al_0$ is still given by \eqref{def_al_0} and $\la$ is chosen as in \eqref{5.5} and $\ga$ is again given as in \eqref{intrinsic_scaling}, where $\mfz \in Q_2$ with $f(z) > \la$. But in contrast to $p\geq 2$ case, we have $\ga = (r_{\mfz}^{-\al} \la )^{2-p} \geq 1$. Hence for $R \in (0,r_{\mfz})$, we have $Q_{\mfz}(\ga^{-\frac12}R,R^2)\subset Q_2$. Now once again, in order to apply Lemma \ref{gehring_lemma}, we need to find a suitable intrinsic parabolic cylinder around $\mfz$, which enables us to apply Lemma \ref{reverse_holder_inequality}.  We observe, from the definition \eqref{def_g_f} that $n+2 = \al d$ (recall $d$ is defined as in \eqref{de_d}) and $(2-p)\frac{n}{2} + d = p-\be$, which gives
 \begin{equation*}
  \begin{array}{ll}
   \fiint_{Q_{\mfz}(\ga^{-\frac12}R, R^2)} \psi(z)^{p-\be} \ dz & \apprle \frac{|Q_2|}{|Q_{\mfz}(\ga^{-\frac12}R, R^2)|}\fiint_{Q_2} \psi(z)^{p-\be} \ dz  = \frac{2^{n+2}}{R^{n+2} \ga^{-\frac{n}{2}}} \al_0^d  
   \apprle (r_{\mfz}^{-\al} \la )^{p-\be}. 
  \end{array}
 \end{equation*}
 Now we can continue as in the $p \geq 2$ case to obtain the desired conclusion. 
\end{description}
This completes the  proof. \hfill \qed

\section*{References}


\end{document}